\documentclass[11pt]{amsart}
\usepackage[utf8]{inputenc}
\usepackage{amsmath,amsfonts}
\usepackage{amssymb}
\usepackage{amscd}
\usepackage{amsthm}
\usepackage{yhmath}
\usepackage{subfigure}

\usepackage[all]{xy}
\usepackage{color}

\usepackage{pdfsync}
\usepackage[colorlinks = true, pagebackref]{hyperref}
\hyphenation{english} 
\usepackage{color}
\usepackage{pgf,tikz}
\usetikzlibrary{arrows}
\usepackage{cancel}
\usepackage{color}
\usepackage{subfigure}
\usepackage{float}
\usepackage{verbatim}
\usepackage{wrapfig}
\usepackage{mathtools}

\usepackage[margin=1.26in]{geometry}



\usepackage[normalem]{ulem}


\usepackage{pdfsync}
\usepackage{color}
\usepackage{mathrsfs}



\hyphenation{apply}

\usepackage[T1]{fontenc}
\usepackage{mathrsfs}
\usepackage{epsfig}
\usepackage{xypic}
\usepackage{verbatim}
\usepackage{multicol}
\usepackage{wrapfig}
\usepackage{pgf,tikz}
\usepackage{mathrsfs}
\usetikzlibrary{arrows}

\numberwithin{equation}{section}

	\usepackage{amsrefs}

\newcommand{\N}{\mathbb{N}}

\newcommand{\R}{\mathbb{R}}

\usepackage{mathrsfs}
\usepackage{mathtools}
\usepackage{comment}
\usepackage{indentfirst}
\usepackage{braket}
\usepackage{scalerel}

{\left\lbrace\begin{array}{@{}l@{}}}%
{\end{array}\right.}

\makeatletter
\newcommand*{\mint}[1]{%
	\mint@l{#1}{}%
}
\newcommand*{\mint@l}[2]{%
	\@ifnextchar\limits{%
		\mint@l{#1}%
	}{%
	\@ifnextchar\nolimits{%
		\mint@l{#1}%
	}{%
	\@ifnextchar\displaylimits{%
		\mint@l{#1}%
	}{%
	\mint@s{#2}{#1}%
}%
}%
}%
}
\newcommand*{\mint@s}[2]{%
	\@ifnextchar_{%
		\mint@sub{#1}{#2}%
	}{%
	\@ifnextchar^{%
		\mint@sup{#1}{#2}%
	}{%
	\mint@{#1}{#2}{}{}%
}%
}%
}
\def\mint@sub#1#2_#3{%
	\@ifnextchar^{%
		\mint@sub@sup{#1}{#2}{#3}%
	}{%
	\mint@{#1}{#2}{#3}{}%
}%
}
\def\mint@sup#1#2^#3{%
	\@ifnextchar_{%
		\mint@sub@sup{#1}{#2}{#3}%
	}{%
	\mint@{#1}{#2}{}{#3}%
}%
}
\def\mint@sub@sup#1#2#3^#4{%
	\mint@{#1}{#2}{#3}{#4}%
}
\def\mint@sup@sub#1#2#3_#4{%
	\mint@{#1}{#2}{#4}{#3}%
}
\newcommand*{\mint@}[4]{%
	\mathop{}%
	\mkern-\thinmuskip
	\mathchoice{%
		\mint@@{#1}{#2}{#3}{#4}%
		\displaystyle\textstyle\scriptstyle
	}{%
	\mint@@{#1}{#2}{#3}{#4}%
	\textstyle\scriptstyle\scriptstyle
}{%
\mint@@{#1}{#2}{#3}{#4}%
\scriptstyle\scriptscriptstyle\scriptscriptstyle
}{%
\mint@@{#1}{#2}{#3}{#4}%
\scriptscriptstyle\scriptscriptstyle\scriptscriptstyle
}%
\mkern-\thinmuskip
\int#1%
\ifx\\#3\\\else_{#3}\fi
\ifx\\#4\\\else^{#4}\fi  
}
\newcommand*{\mint@@}[7]{%
	\begingroup
	\sbox0{$#5\int\m@th$}%
	\sbox2{$#5\int_{}\m@th$}%
	\dimen2=\wd0 %
	\let\mint@limits=#1\relax
	\ifx\mint@limits\relax
	\sbox4{$#5\int_{\kern1sp}^{\kern1sp}\m@th$}%
	\ifdim\wd4>\wd2 %
	\let\mint@limits=\nolimits
	\else
	\let\mint@limits=\limits
	\fi
	\fi
	\ifx\mint@limits\displaylimits
	\ifx#5\displaystyle
	\let\mint@limits=\limits
	\fi
	\fi
	\ifx\mint@limits\limits
	\sbox0{$#7#3\m@th$}%
	\sbox2{$#7#4\m@th$}%
	\ifdim\wd0>\dimen2 %
	\dimen2=\wd0 %
	\fi
	\ifdim\wd2>\dimen2 %
	\dimen2=\wd2 %
	\fi
	\fi
	\rlap{%
		$#5%
		\vcenter{%
			\hbox to\dimen2{%
				\hss
				$#6{#2}\m@th$%
				\hss
			}%
		}%
		$%
	}%
	\endgroup
}

\makeatletter
\def\overbracket#1{\mathop{\vbox{\ialign{##\crcr\noalign{\kern3\p@}
				\downbracketfill\crcr\noalign{\kern3\p@\nointerlineskip}
				$\hfil\displaystyle{#1}\hfil$\crcr}}}\limits}
\def\underbracket#1{\mathop{\vtop{\ialign{##\crcr
				$\hfil\displaystyle{#1}\hfil$\crcr\noalign{\kern3\p@\nointerlineskip}
				\upbracketfill\crcr\noalign{\kern3\p@}}}}\limits}
\def\overparenthesis#1{\mathop{\vbox{\ialign{##\crcr\noalign{\kern3\p@}
				\downparenthfill\crcr\noalign{\kern3\p@\nointerlineskip}
				$\hfil\displaystyle{#1}\hfil$\crcr}}}\limits}
\def\underparenthesis#1{\mathop{\vtop{\ialign{##\crcr
				$\hfil\displaystyle{#1}\hfil$\crcr\noalign{\kern3\p@\nointerlineskip}
				\upparenthfill\crcr\noalign{\kern3\p@}}}}\limits}
\def\downparenthfill{$\m@th\braceld\leaders\vrule\hfill\bracerd$}
\def\upparenthfill{$\m@th\bracelu\leaders\vrule\hfill\braceru$}
\def\upbracketfill{$\m@th\makesm@sh{\llap{\vrule\@height3\p@\@width.7\p@}}%
	\leaders\vrule\@height.7\p@\hfill
	\makesm@sh{\rlap{\vrule\@height3\p@\@width.7\p@}}$}
\def\downbracketfill{$\m@th
	\makesm@sh{\llap{\vrule\@height.7\p@\@depth2.3\p@\@width.7\p@}}%
	\leaders\vrule\@height.7\p@\hfill
	\makesm@sh{\rlap{\vrule\@height.7\p@\@depth2.3\p@\@width.7\p@}}$}
\makeatother


\theoremstyle{theorem}
\newtheorem{theorem}{\sc \textbf{Theorem}}[section]  
\newtheorem{proposition}[theorem]{\sc \textbf{Proposition}}   
\newtheorem{corollary}[theorem]{\sc \textbf{Corollary}}        
\newtheorem{lemma}[theorem]{\sc \textbf{Lemma}}

\theoremstyle{remark}
\newtheorem{definition}[theorem]{\sc Definition}

\usepackage{amsfonts}

\newcommand{\loc}{\mathrm{loc}}

\newcommand{\dd}{\mathrm{d}}














\headsep 1cm

\title[Sobolev spaces on Lie groups]{Sobolev spaces on Lie groups:\\ embedding theorems and algebra properties} 

\author[Bruno]{Tommaso Bruno}
\address{Dipartimento di Scienze Matematiche ``Giuseppe Luigi Lagrange'',
  Politecnico di Torino, Corso Duca degli Abruzzi 24, 10129 Torino,
  Italy - Dipartimento di Eccellenza 2018-2022}
\email{tommaso.bruno@polito.it}

\author[Peloso]{Marco M.\ Peloso}
\address{Dipartimento di Matematica, 
Universit\`a degli Studi di Milano, 
Via C.\ Saldini 50,  
20133 Milano, Italy}
\email{marco.peloso@unimi.it}

\author[Tabacco]{Anita Tabacco}
\author[Vallarino]{Maria Vallarino}
\address{Dipartimento di Scienze Matematiche ``Giuseppe Luigi Lagrange'',
  Politecnico di Torino, Corso Duca degli Abruzzi 24, 10129 Torino,
  Italy - Dipartimento di Eccellenza 2018-2022}
\email{anita.tabacco@polito.it}
\email{maria.vallarino@polito.it}

\keywords{Sobolev spaces, Sobolev embeddings, Sobolev algebras, Lie groups, Riesz transforms}

\thanks{All authors are  partially supported by the grant PRIN 2015 {\em Real and Complex Manifolds: Geometry, Topology and Harmonic Analysis}, and are members of the Gruppo Nazionale per l'Analisi Matematica, la Probabilit\`a e le loro Applicazioni (GNAMPA) of the Istituto Nazionale di Alta Matematica (INdAM)}

\date{}


\begin{document}
	\maketitle
	\begin{abstract}
Let $G$ be a noncompact connected Lie group, denote with $\rho$ a right Haar measure and choose a family of linearly independent left-invariant vector fields $\mathbf{X}$ on $G$ satisfying H\"ormander's condition. Let $\chi$ be a positive character of $G$ and consider the measure $\mu_\chi$ whose density  with respect to $\rho$ is $\chi$. In this paper, we introduce Sobolev spaces $L^p_\alpha(\mu_\chi)$ adapted to $\mathbf{X}$ and $\mu_\chi$ ($1<p<\infty$, $\alpha\geq 0$) and study embedding theorems and algebra properties of these spaces. As an application, we prove local well-posedness and regularity results of solutions of some nonlinear heat and Schr\"odinger equations on the group. 

	\end{abstract}
	
	\section{Introduction}
In order to study differential equations on a smooth manifold and the properties of their solutions, one needs a way to measure the regularity of functions.  If the manifold is endowed with a Riemannian structure, it has been proven along the years that Sobolev norms defined in terms of the Laplace--Beltrami operator are very natural tools.  In such setting, the properties of the homogeneous and non-homogeneous Sobolev spaces, as well as of those of the Besov spaces -- embedding theorems and algebra properties, for istance -- are well understood; see~\cite{Hebey,Aubin,Robinson} e.g., and references therein. On a  sub-Riemannian manifold, though still very natural,  these types of questions are much less  understood.  In the fundamental works of Folland and Stein~\cite{FollandStein} and Folland~\cite{Folland} on the Heisenberg and stratified nilpotent Lie groups, respectively, the authors studied the inverse of a certain class of operators including the standard ``sum-of-squares'' sub-Laplacian, and introduced Sobolev spaces defined in terms of fractional powers of the sub-Laplacian itself.  Their results resembled the classical results known in the Riemannian and elliptic case, thus completely justifying the introduction of these spaces. Since then, this approach was shown to provide satisfying results by many authors, who successfully generalised Folland and Stein's construction in various settings, see~\cite{Bohnke, CRTN, PV, TER1, Varopoulos1} e.g. 

\smallskip

In the remarkable paper ~\cite{ABGR}, Agrachev, Boscain, Gauthier and Rossi introduced an \emph{intrinsic hypoelliptic Laplacian} on any regular sub-Riemannian manifold, generalizing the Laplace--Beltrami operator on a Riemannian manifold. On a Lie group endowed with the Carnot--Carathéodory metric induced by a family of left-invariant vector fields satisfying H\"ormander's condition, this operator  is a sub-Laplacian with a first-order drift term. This evokes an interesting paper of Hebisch, Mauceri and Meda~\cite{HMM}, where the authors established a one-to-one correspondence between sub-Laplacians with drift on Lie groups that are symmetric on some $L^2$-spaces, and those measures whose density is a continuous positive character with respect to a right Haar measure of the group. Thus, we are led to consider noncompact, connected Lie groups and study Sobolev  spaces defined in terms of sub-Laplacians with a drift and endowed with such measures, and we do so in this paper, as we now illustrate.

\smallskip

Let $G$ be a noncompact connected Lie group with identity $e$ and let $\mathbf{X}= \{X_1, \dots ,X_\ell \}$ be a family of linearly independent, left-invariant vector fields of $G$ satisfying H\"ormander's condition. Denote with $\rho$ a right Haar measure of $G$, let $\chi$ be a continuous positive character of $G$, and consider the measure $\mu_\chi$ on $G$ whose density is $\chi$ with respect to $\rho$, that is 
\begin{equation}\label{muchi}
\dd \mu_\chi = \chi \, \dd \rho.
\end{equation}
Let $c_j=(X_j \chi)(e)$, $j=1,\dots, \ell$, and consider the differential operator 
\[
\Delta_\chi = - \sum_{j=1}^\ell (X_j^2 + c_jX_j)
\]
with domain the set of smooth and compactly supported functions $C_c^\infty(G)$ on $G$. As mentioned above, these operators were introduced by Hebisch, Mauceri and Meda~\cite{HMM}. On the one hand, they proved that $\Delta_\chi$ is essentially self-adjoint on $L^2(\mu_\chi)$. On the other hand, they proved that if a sub-Laplacian with drift is symmetric on $L^2(\mu)$ for a positive measure $\mu$ on $G$, then $\mu=\mu_\chi$ for a positive character $\chi$ of $G$ and the drift has the form above. 

With a slight abuse of notation, we still denote with $\Delta_{\chi}$ the smallest closed extension of $\Delta_\chi$ on $L^p(\mu_\chi)$, $1<p<\infty$, and define the Sobolev spaces 
\[
L^p_\alpha (\mu_\chi) \coloneqq \big\{ f\in L^p(\mu_\chi) \colon
\Delta_{\chi}^{\alpha/2} f\in L^p(\mu_\chi) \big\} 
\]
endowed with the norm
\[
\| f\|_{L^p_\alpha (\mu_\chi)} \coloneqq \|f\|_{L^p(\mu_\chi)} + \|\Delta_{\chi}^{\alpha/2} f\|_{L^p(\mu_\chi)}.
\]
We remark that in the unimodular case and with $\chi=1$ these spaces coincide with the Sobolev spaces defined by the classical left-invariant sum-of-squares operator $\Delta= -\sum_{j=1}^\ell X_j^2$ and studied extensively in~\cite{CRTN}. In the nonunimodular case, the spaces $L^p_\alpha (\mu_\chi)$ are a generalization of those considered in~\cite{PV}.  Furthermore, we observe  that the  operator $\Delta= - \sum_{j=1}^\ell X_j^2$ is not even symmetric on $L^2(\mu_\chi)$ if $\chi\neq 1$, so that a Sobolev space adapted to the measure $\mu_\chi$ when $\chi\neq 1$ cannot be defined by means of fractional powers of $\Delta$.  

The spaces $L^p_\alpha(\mu_\chi)$ emerge indeed as the natural Sobolev spaces adapted to the measure $\mu_\chi$, for if $\alpha=k$ is an integer, then 

\begin{equation}\label{carinteger}
\|f\|_{L^p_k(\mu_\chi)}\approx \sum_{ J \in \{1,\dots, \ell \}^m,\, m\leq k} \| X_J f\|_{L^p(\mu_\chi)},
\end{equation}
where $X_J = X_{j_1}\cdots X_{j_m}$ and  $J= (j_1,\dots, j_m)$. The characterization~\eqref{carinteger} of the Sobolev norm is a consequence of the boundedness on $L^p(\mu_\chi)$, $1<p<\infty$, of the local Riesz transforms of any order associated with $\Delta_\chi$ (Theorem~\ref{teoLocalRiesz}), which we obtain by means of endpoint results involving suitable Hardy and BMO spaces and by interpolation. 

\smallskip

The aim of this paper is to investigate the structure of the spaces $L^p_\alpha (\mu_\chi)$, their algebra properties  and embedding theorems. We also apply these results to some nonlinear Cauchy problems associated with $\Delta_\chi$. When $\chi=1$ and the group $G$ is unimodular, embedding theorems for these Sobolev spaces  are known in the stratified case \cite{Folland}, graded case \cite{Fischer-Ruzhanski}, and may be considered as folklore in the case of Lie groups of polynomial growth. However, we could not find any precise reference in the literature. 

On nonunimodular groups, only the case of first-order Sobolev spaces was studied by Varopoulos~\cite{Varopoulos1} when $\chi$ is a power of the modular function $\delta$. Again when $\chi=1$, Coulhon, Russ and Tardivel-Nachef~\cite{CRTN} investigated algebra properties of Sobolev spaces on unimodular Lie groups, while in the nonunimodular case they were proved in~\cite{PV} by the second and fourth named authors of the present paper. In \cite{PV} it is also proved that the embedding $L^p_\alpha(\rho) \hookrightarrow L^\infty$ cannot hold, in general, if the group is nonunimodular. By~\eqref{carinteger}, however, this is not surprising. Indeed, an easy translation-invariance argument (see Section~\ref{Sec_SET}) shows that an embedding of the form  
\begin{equation*}
L^p_\alpha(\mu_\chi) \hookrightarrow L^q(\mu_\chi), \quad p\in (1,\infty), \; \alpha\geq 0, \quad q\in (1,\infty]\setminus \{p\} 
\end{equation*}
for some positive character $\chi$ may hold only if $\chi$ is the modular function $\delta$, i.e.\ $\mu_\chi =\lambda$ is the left Haar measure of $G$. In other words, the natural space to look for Sobolev embeddings turns out to be $L^p_\alpha(\lambda)$, defined as shown above in terms of $\Delta_\delta$. Under this point of view, it is interesting to notice that $\Delta_\delta$, which from now on will be denoted with $\mathcal{L}$, is the aforementioned intrinsic hypoelliptic Laplacian associated with the Carnot--Carathéodory metric induced by ${\bf X}$ which was introduced in~\cite{ABGR}. We recall that $\mathcal{L}$ reduces to the sum-of-squares operator $\Delta$ if and only if $G$ is unimodular. 

The two main results of the paper are as follows. First, we prove an embedding theorem for the Sobolev spaces $L^p_\alpha(\mu_\chi)$ for any choice of the positive character $\chi$ of the group. More precisely, we have the following:
\begin{theorem}\label{SETright}
Let $\chi$ be a positive character of $G$ and $1< p,q <\infty$.
\begin{itemize}
\item[(a)] If $\alpha>0$, then $L^p_\alpha(\mu_\chi) \hookrightarrow L^q(\mu_{\chi^{q/p}\delta^{1-q/p}})$ for every $q\geq p$ such that  $\frac1p-\frac1q \le \frac\alpha d$; 
\item[(b)] if $\alpha\geq d/p$, then $L^p_\alpha(\mu_\chi) \hookrightarrow L^q(\mu_{\chi^{q/p}\delta^{1-q/p}})$ for every $q\geq  p$; 
\item[(c)] if $\alpha> d/p$, then $L^p_\alpha(\mu_\chi) \hookrightarrow (\delta \chi^{-1})^{1/p} L^{\infty}$. 
\end{itemize}
\end{theorem}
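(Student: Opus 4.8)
The plan is to reduce the three embeddings, for an arbitrary positive character $\chi$, to the single model case of the intrinsic Laplacian $\mathcal{L}=\Delta_\delta$ on the left Haar measure $\lambda=\delta\,\rho$, and then to prove the latter by kernel estimates. The reduction rests on the multiplication operator $U=M_{\psi^{1/p}}$, where $\psi=\chi\delta^{-1}$ is again a positive character. A direct computation with the densities shows that $U$ is simultaneously an isometry $L^p(\mu_\chi)\to L^p(\lambda)$ and an isometry $L^q(\mu_{\chi^{q/p}\delta^{1-q/p}})\to L^q(\lambda)$: the point is that the target density $\chi^{q/p}\delta^{1-q/p}$ is chosen precisely so that the \emph{same} multiplier $\psi^{1/p}$ works for both, and likewise $U$ maps $(\delta\chi^{-1})^{1/p}L^\infty=\psi^{-1/p}L^\infty$ onto $L^\infty$. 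Hence (a), (b), (c) are all equivalent to inequalities $\|Uf\|_{L^q(\lambda)}\lesssim\|f\|_{L^p_\alpha(\mu_\chi)}$ and $\|Uf\|_{L^\infty}\lesssim\|f\|_{L^p_\alpha(\mu_\chi)}$.

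The bridge to $\mathcal{L}$ is the ground-state identity $\psi^{1/2}\Delta_\chi\psi^{-1/2}=\mathcal{L}+V$, where $m_j=(X_j\delta)(e)$ and $V=\tfrac14\sum_j(c_j^2-m_j^2)$ is a constant; this follows from $X_j\psi=(c_j-m_j)\psi$ together with the conjugation rule $\psi^{-s}X_j\psi^{s}=X_j+s(c_j-m_j)$. Passing to kernels, the heat kernel of $\Delta_\chi$ with respect to $\mu_\chi$ equals $e^{-tV}\psi(x)^{-1/2}\psi(y)^{-1/2}$ times the heat kernel of $\mathcal{L}$ with respect to $\lambda$, and by subordination the same relation transfers to the Bessel potentials. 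After using the functional calculus and the local Riesz transform bounds of Theorem~\ref{teoLocalRiesz} to identify $\|f\|_{L^p_\alpha(\mu_\chi)}$ with $\|(I+\Delta_\chi)^{\alpha/2}f\|_{L^p(\mu_\chi)}$, the three inequalities collapse to the single weighted estimate $\|w\,G_\alpha F\|_{L^q(\lambda)}\lesssim\|w\,F\|_{L^p(\lambda)}$ for the Bessel operator $G_\alpha=(I+\mathcal{L})^{-\alpha/2}$ of $\mathcal{L}$, with the character weight $w=\psi^{1/p-1/2}$.

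It then remains to prove this estimate on $(G,\lambda)$. I would write $G_\alpha=\tfrac1{\Gamma(\alpha/2)}\int_0^\infty t^{\alpha/2-1}e^{-t}e^{-t\mathcal{L}}\,dt$ and insert the local Gaussian bounds for the heat kernel of $\mathcal{L}$, whose small-time volume factor is governed by the local dimension $d$, i.e.\ $\lambda(B(x,r))\asymp r^{d}$ for $r$ small. Integrating in $t$ produces a kernel with the expected local singularity $\asymp |y^{-1}x|^{\alpha-d}$ (writing $|\cdot|$ for the control distance) and exponential decay at infinity. Splitting into local and global parts, the local part is handled by the Hardy--Littlewood--Sobolev inequality, giving the $L^p(\lambda)\to L^q(\lambda)$ gain exactly under $\frac1p-\frac1q\le\frac\alpha d$, which is (a); part (b) is the resulting range of finite $q$ when $\alpha\ge d/p$, and for $\alpha>d/p$ the local kernel lies in $L^{p'}$ near the diagonal, so Hölder yields the $L^\infty$ bound (c). The global part is uniformly integrable because of the exponential decay, and this is the mechanism behind the absence of any dimensional restriction at large scales for every $q\ge p$.

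The main obstacle is precisely this global, weighted part: one must verify that the character weight $w=\psi^{1/p-1/2}$, whose growth rate along geodesics is $|1/p-1/2|<\tfrac12$ times that of $\psi$, is dominated by the exponential decay of the Bessel kernel of $\mathcal{L}$. This is where the exponential volume growth of the group, together with the fact that $\mathcal{L}$ is the \emph{intrinsic} Laplacian naturally attached to $\lambda$, works in our favour: the bottom of the $L^2(\lambda)$-spectrum of $\mathcal{L}$ controls the decay rate of $G_\alpha$ and beats the weight for every $1<p<\infty$. Keeping the local dimension $d$, which fixes the Sobolev gain at small scales, cleanly separated from this large-scale analysis, and justifying the passage between $\|\Delta_\chi^{\alpha/2}f\|_{L^p}$ and the Bessel potential through the results already established, are the remaining technical points.
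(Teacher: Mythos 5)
Your reduction to the left Haar measure is the same one the paper uses: the target density $\chi^{q/p}\delta^{1-q/p}$ is exactly what makes multiplication by $\psi^{1/p}$, $\psi=\chi\delta^{-1}$, an isometry on both sides, and the paper implements this through Proposition~\ref{deltadentro} (multiplication by a character power is an isomorphism of the \emph{Sobolev} spaces, proved via Proposition~\ref{prop:kintero} and interpolation) rather than through your ground-state conjugation $\psi^{1/2}\Delta_\chi\psi^{-1/2}=\mathcal{L}+V$ followed by a weighted estimate. Both reductions are legitimate; note that since $w=\psi^{1/p-1/2}$ is itself a character, your weighted inequality $\|w\,G_\alpha F\|_{L^q(\lambda)}\lesssim\|wF\|_{L^p(\lambda)}$ is just the unweighted Young inequality for the modified kernel $wG_\alpha$, which is precisely what Corollary~\ref{cor:deltaG} controls. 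For (b) and (c) your plan (Gaussian bounds, kernel $\lesssim|x|^{\alpha-d}$ locally and exponentially decaying at infinity, then Young resp.\ H\"older) is the paper's proof of Theorem~\ref{SET}(b),(c). One inaccuracy: the exponential decay that dominates the character weight does not come from the bottom of the $L^2(\lambda)$-spectrum of $\mathcal{L}$; it comes from taking the shift $c$ in $(\Delta_\chi+cI)^{-\alpha/2}$ large (harmless by \eqref{equivNorms}), which makes the decay rate $c'=\frac12\sqrt{b(c-\omega)}$ as large as needed — a fixed spectral gap cannot beat an arbitrary character power.

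The genuine gap is in (a) at the endpoint $\frac1p-\frac1q=\frac\alpha d$ with $\alpha<d/p$. There the local part of the kernel, $|x|^{\alpha-d}\mathbf{1}_{B_1}$, lies only in weak $L^{d/(d-\alpha)}$, so Young's inequality fails and you appeal to ``the Hardy--Littlewood--Sobolev inequality''. On a general noncompact, possibly non-unimodular Lie group of exponential growth this endpoint fractional-integration theorem is not an off-the-shelf fact — it is essentially equivalent to the statement being proved, and the convolution inequalities available in this setting (Lemma~\ref{Young}, from Hewitt--Ross, with its asymmetric exponents forced by non-unimodularity) have no stated weak-type analogue. The paper sidesteps this entirely: it imports the endpoint for $\alpha=1$ from Varopoulos (\eqref{embed:Var}), obtains $\alpha\in(0,1)$ by complex interpolation (Lemma~\ref{lemma_interpolation}, which itself requires boundedness of the imaginary powers $\Delta_\chi^{iu}$), and then passes from $\alpha$ to $\alpha+1$ via the recursive characterization of Proposition~\ref{alpha+1}, which rests on the local Riesz transform bounds of Theorem~\ref{teoLocalRiesz}. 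To keep your kernel-only route for (a) you would have to prove a local weak-type $(1,\,d/(d-\alpha))$ bound for convolution with $|x|^{\alpha-d}\mathbf{1}_{B_1}$ on $(G,\lambda)$ and interpolate; otherwise you must fall back on the Varopoulos--interpolation--induction scheme as the paper does.
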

In the statement above, the positive constant $d$ is the local dimension of $G$ as defined below in~\eqref{pallepiccole} and, if $\kappa$ is a positive character of the group, $\kappa L^{\infty}$ stands for the Banach space of functions $\{ \kappa f\colon   f\in L^\infty \}$ endowed with the norm $\| \kappa f \|_{ \kappa L^\infty } \coloneqq \| f\|_{\infty}$.

In the special case when $\chi=\delta$, Theorem~\ref{SETright} provides embedding results between spaces endowed with the same measure, which is the left Haar measure (see Theorem~\ref{SET}); as mentioned before, this is the unique case for which such embeddings hold true.

As a second main result, we prove algebra properties of the spaces $L^p_\alpha(\mu_\chi)$: 
\begin{theorem}\label{algebraproduct}
Let $\chi$ be a positive character of $G$, $\alpha\geq 0$, $p_1,q_2\in(1,\infty]$ and $p,p_2,q_1\in (1,\infty)$ such that $\frac1p=\frac{1}{p_i}+\frac{1}{q_i}$, $i=1,2$. Then, for all $f\in L^{p_1}(\mu_\chi)\cap L^{p_2}_{\alpha}(\mu_\chi)$ and $g\in L^{q_2}(\mu_\chi)\cap L^{q_1}_{\alpha}(\mu_\chi)$, we have $fg\in L^p_{\alpha}(\mu_\chi)$ and 
\[
\|fg\|_{L^p_\alpha(\mu_\chi)}\lesssim
\|f\|_{L^{p_1}(\mu_\chi)}\|g\|_{L^{q_1}_{\alpha}(\mu_\chi)}+\|f\|_{L^{p_2}_{\alpha}(\mu_\chi)}\|g\|_{L^{q_2}(\mu_\chi)}. 
\]
In particular, $L^p_\alpha(\mu_\chi)\cap L^\infty$ is an algebra for every $p\in (1,\infty)$.
\end{theorem}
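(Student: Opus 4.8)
The plan is to separate the trivial part from the substantial one. The bound $\|fg\|_{L^p(\mu_\chi)}\le \|f\|_{L^{p_1}(\mu_\chi)}\|g\|_{L^{q_1}(\mu_\chi)}\le \|f\|_{L^{p_1}(\mu_\chi)}\|g\|_{L^{q_1}_\alpha(\mu_\chi)}$ is immediate from Hölder's inequality and the inclusion $L^{q_1}_\alpha(\mu_\chi)\hookrightarrow L^{q_1}(\mu_\chi)$, so the whole theorem reduces to the homogeneous estimate
\[
\|\Delta_\chi^{\alpha/2}(fg)\|_{L^p(\mu_\chi)}\lesssim \|f\|_{L^{p_1}(\mu_\chi)}\|g\|_{L^{q_1}_\alpha(\mu_\chi)}+\|f\|_{L^{p_2}_\alpha(\mu_\chi)}\|g\|_{L^{q_2}(\mu_\chi)}.
\]
The algebra statement is then the special case $p_1=q_2=\infty$, $p_2=q_1=p$. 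I would first dispose of the integer case $\alpha=k\in\mathbb{N}$: by the characterization~\eqref{carinteger} it suffices to bound $\|X_J(fg)\|_{L^p(\mu_\chi)}$ for every word $J$ with $|J|\le k$, and since each $X_j$ is a first-order operator the Leibniz rule expands $X_J(fg)$ into a finite sum of terms $X_{J'}f\,X_{J''}g$ with $|J'|+|J''|=|J|$. Applying Hölder's inequality with $\frac1p=\frac1a+\frac1b$ and then Gagliardo--Nirenberg interpolation inequalities---which follow from Theorem~\ref{SETright} together with~\eqref{carinteger}---to control the intermediate-order factors, and recombining by Young's inequality, yields the integer estimate.

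For non-integer $\alpha$ the plan is to promote the integer estimates to all real orders by interpolation, regarding $z\mapsto \Delta_\chi^{z/2}(fg)$ as an analytic family of bilinear operators and applying Stein's complex interpolation between $\mathrm{Re}\,z=k$ and $\mathrm{Re}\,z=k+1$ with $k<\alpha<k+1$. The bounds on the vertical lines reduce to the integer estimates once one knows that the imaginary powers $\Delta_\chi^{iy}$ are bounded on $L^p(\mu_\chi)$ with norm growing at most polynomially in $y$; this is exactly the functional-calculus information that underlies the local Riesz transform bounds of Theorem~\ref{teoLocalRiesz}. An equivalent route is a Bony-type paraproduct decomposition of $fg$ relative to a Littlewood--Paley resolution adapted to $\Delta_\chi$, estimating $\Delta_\chi^{\alpha/2}$ of the low-high, high-low and high-high pieces separately; this avoids bilinear interpolation at the cost of developing the square-function machinery for $\Delta_\chi$.

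I expect the fractional case to be the main obstacle. Two features make it delicate. First, the estimate is genuinely bilinear and carries two competing terms that trade regularity of $f$ against regularity of $g$, so it does not fit a single bilinear bound $T\colon A\times B\to C$ directly; I would have to organize the interpolation so that the two endpoints reproduce both summands, for instance by freezing one argument at a time or by invoking a multilinear form of Stein's theorem. Second, everything must be carried out in the weighted scale $L^p(\mu_\chi)$ in the presence of the drift term, so the auxiliary Gagliardo--Nirenberg inequalities and the polynomial bounds on the imaginary powers $\Delta_\chi^{iy}$ have to be available with respect to $\mu_\chi$ rather than an unweighted measure. Once these ingredients are secured and the endpoint bounds are matched to the two-term right-hand side, interpolation delivers the homogeneous estimate for every $\alpha\ge0$, and with it the theorem.
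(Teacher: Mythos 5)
There are two genuine gaps in your plan, and together they leave both the integer and the fractional case unproved. First, the Gagliardo--Nirenberg inequalities you invoke to recombine the Leibniz terms do \emph{not} follow from Theorem~\ref{SETright} together with~\eqref{carinteger}. For a general character $\chi$ the embeddings of Theorem~\ref{SETright} change the underlying measure (they map $L^p_\alpha(\mu_\chi)$ into $L^q(\mu_{\chi^{q/p}\delta^{1-q/p}})$), so they cannot produce an interpolation inequality \emph{within} the fixed scale $L^\bullet(\mu_\chi)$ such as $\|f\|_{L^a_j(\mu_\chi)}\lesssim\|f\|_{L^{p_1}(\mu_\chi)}^{1-\theta}\|f\|_{L^{p_2}_k(\mu_\chi)}^{\theta}$. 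The inequality you need is exactly Proposition~\ref{mean}, which is obtained from Littlewood--Paley--Stein theory for the diffusion semigroup $e^{-t\Delta_\chi}$ (following \cite[Proposition 4.2]{PV}), not from Sobolev embeddings; without it your integer-case argument does not close.

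Second, and more seriously, the fractional case is not resolved. You correctly identify the obstruction --- the right-hand side is a \emph{sum} of two products with $\alpha$-dependent norms in both slots --- but neither of your proposed fixes works as stated: multilinear Stein interpolation produces a single product of norms, not a sum, and freezing $g$ yields a linear operator $f\mapsto\Delta_\chi^{z/2}(fg)$ whose bounds on the lines $\mathrm{Re}\,z=k$ and $\mathrm{Re}\,z=k+1$ involve $\|g\|_{L^{q_1}_k}$ and $\|g\|_{L^{q_1}_{k+1}}$ respectively, so interpolation returns a constant controlled by the \emph{larger} integer norm of $g$ rather than by $\|g\|_{L^{q_1}_\alpha}$. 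The paper sidesteps this entirely: for $\alpha\in(0,1)$ it characterizes $\|\cdot\|_{L^p_\alpha(\mu_\chi)}$ by the first-difference square functional $S^{\mathrm{loc}}_\alpha$ (Theorem~\ref{Salphaloc}), for which a genuine pointwise Leibniz inequality holds since $|f(xy^{-1})g(xy^{-1})-f(x)g(x)|\leq|f(xy^{-1})||g(xy^{-1})-g(x)|+|g(x)||f(xy^{-1})-f(x)|$; the resulting terms are controlled via the $G^{\mathrm{loc}}_{\alpha,q}$ functionals, local maximal functions and Proposition~\ref{mean} (Lemma~\ref{lemmaGlocq}), and the general $\alpha$ is then reached by induction of step one through Proposition~\ref{alpha+1}. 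This is essentially the paraproduct-free version of your ``alternative route,'' and it is the part of the machinery you defer; until it (or a working paraproduct calculus for $\Delta_\chi$) is actually developed, the proof is incomplete.
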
 
As a consequence of Theorems~\ref{SETright} and~\ref{algebraproduct}, we obtain that $L^p_\alpha(\lambda)$ is an algebra for every $p\in (1,\infty)$ and $\alpha>d/p$, thus extending the classical results by Strichartz~\cite{Strichartz}  and Bohnke~\cite{Bohnke}.  We also prove that if $\chi \neq \delta$, i.e.\ if $\mu_\chi$ is not the left measure, then $L^p_\alpha(\mu_\chi)$ is not an algebra, in general, and it does not embed in $L^\infty$. By means of the algebra property of $L^p_\alpha(\mu_\chi)\cap L^\infty$, we finally prove local well-posedness and regularity results of solutions of some nonlinear heat equations associated with $\Delta_\chi$, and some nonlinear Schr\"odinger equations associated with $\mathcal{L}$. 

\smallskip

The content of the paper is as follows. In Section~\ref{Sec2} we introduce the operator $\Delta_\chi$ and prove some properties of its heat kernel $p_t^\chi$. In Section~\ref{Sec_Sobolev} we introduce the spaces $L^p_\alpha(\mu_\chi)$, study their interpolation properties (Lemma~\ref{lemma_interpolation}) and characterize their norms either when the regularity $\alpha$ is an integer (Proposition~\ref{prop:kintero}) or by means of a recursive formula (Proposition~\ref{alpha+1}). In Section~\ref{Sec_SET}, we prove the embedding results of Theorem~\ref{SETright}. In Section~\ref{Sec_algebra} we prove Theorem~\ref{algebraproduct} and provide some counterexamples in the case of the affine group of the real line. Section~\ref{Sec_applications} is devoted to applications to nonlinear problems, and Section~\ref{Sec_Proof_Riesz} to the proof of the $L^p(\mu_\chi)$-boundedness of the local Riesz transforms associated with $\Delta_\chi$ (Theorem~\ref{teoLocalRiesz}). Section~\ref{Sec:finrem} contains some final remarks.

\section{Lie groups and the sub-Laplacian $\Delta_\chi$}\label{Sec2} 

\subsection{Preliminaries on Lie groups}
All throughout the paper, $G$ will be a noncompact, connected Lie group with identity $e$. We shall denote with $\rho$ a right Haar measure, with $\lambda$ a left Haar measure and with $\delta$ the modular function, i.e.\ the function on $G$ such that 
\[
\dd \lambda = \delta \, \dd \rho.
\]
It is well known that $\delta$ is a smooth positive character of $G$, i.e.\ a smooth homomorphism of $G$ into the multiplicative group $\R^+$. The letter $\chi$ will always denote a continuous positive character of $G$, which is then automatically smooth. We shall denote with $\mu_\chi$ the measure with density $\chi$ with respect to $\rho$ as in~\eqref{muchi}. Observe that $\mu_\delta=\lambda$ and $\mu_1=\rho$.

The set $\mathbf{X}= \{ X_1,\dots, X_\ell\}$ will be a family of left-invariant, linearly independent vector fields which satisfy H\"ormander's condition. These vector fields induce a left-invariant distance $d_C(\cdot, \cdot)$ which is called Carnot--Carathéodory distance. Given a ball $B$ with respect to such distance, $c_B$ will denote its center and $r_B$ its radius. We also write $B=B(c_B,r_B)$. We write $|x|=d_C(x,e)$, and $B_r=B(e,r)$. The volume of the ball $B_r$ with respect to the right Haar measure $\rho$ will be denoted with $V(r)=\rho(B_r)$.  

It is well known (cf.~\cite{Guiv,Varopoulos1}) that there exist two constants, which we call $d=d(G, \mathbf{X})$ and $D=D(G)$, such that 
\begin{equation}\label{pallepiccole}
V(r)\approx r^d\qquad \forall r\in (0,1]
\end{equation}
and
\begin{equation}\label{pallegrandi}
V(r)\lesssim e^{Dr}\qquad \forall r\in (1,\infty).
\end{equation}
They are usually called \emph{local} and \emph{global} dimensions of the metric measure space $(G, d_C, \rho)$, respectively. As a consequence of~\eqref{pallepiccole}, the space $(G, d_C, \rho)$ is locally doubling. Since for every character $\chi$ and $R>0$, there exists a constant $c=c(\chi, R)$ such that 
\begin{equation}\label{localdoubmugamma}
c^{-1} \chi(x) \leq \chi (y) \leq c \chi(x) \qquad \forall \, x,y\in G
\, \mbox{ s.t. }\, d_C(x,y)\leq R 
\end{equation}
then also the metric measure space $(G, d_C, \mu_\chi)$ is locally doubling.

If $p\in (1,\infty)$, the spaces of (equivalent classes of) measurable functions whose $p$-power is integrable with respect to $\mu_\chi$ will be denoted by $L^p(\mu_\chi)$, and endowed with the usual norm which we shall denote with $\| \cdot \|_{L^p(\mu_\chi)}$. The space $L^\infty$ will be the space of (equivalent classes of) measurable functions which are $\rho$-essentially bounded; observe that this coincides with the space of $\mu_\chi$-essentially bounded functions for every positive character $\chi$ of $G$, for $\mu_\chi$ is absolutely continuous with respect to $\rho$. 

We shall denote with $\Delta$ the smallest self-adjoint extension on $L^2(\rho)$ of the ``sum-of-squares'' operator  
\[
\Delta \coloneqq - \sum_{j=1}^\ell X_j^2
\]
on $C_c^\infty(G)$. The smooth integral kernel of $e^{-t\Delta}$ will be denoted with $P_t(\cdot, \cdot )$, and its smooth convolution kernel with $p_t$, i.e.\ $e^{-t\Delta} f = f* p_t$ where the convolution between two functions $f$ and $g$, when it exists, is  
	\[
	f*g(x) =\int_G f(xy^{-1})g(y)\, \dd \rho(y).
	\]
They are related to each other by the relation
\begin{equation}\label{Pt_pt}
P_t(x,y)= p_t(y^{-1}x)\delta(y) \qquad \forall \, x,y\in G.
\end{equation}
All throughout the paper, we shall denote with $\mathfrak{l}$ the set $\{1,\dots, \ell \}$. For every $m\in\N$, $\mathfrak{l}^m$ will be the set of multi-indices $J=(j_1,\dots ,j_m)$ such that $j_i \in \mathfrak{l}$ and for $J\in \mathfrak{l}^m$ we denote by $X_J$ the left-invariant differential operator
\[
X_J = X_{j_1}\cdots X_{j_m} \,.
\]
For any quantity $A$ and $B$, we shall write $A\lesssim B$ by meaning that there exists a constant $c>0$ such that $A\leq c \,B$. If $A\lesssim B$ and $B\lesssim A$, we write $A\approx B$ . 

\subsection{Hardy and BMO spaces}\label{Subsec_HardyBMO}
We now introduce the local Goldberg type atomic Hardy space $\mathfrak h^1(\mu_\chi)$ and its dual $\mathfrak{bmo}(\mu_\chi)$ adapted to the measure $\mu_\chi$. The space $\mathfrak{h}^1(\mu_\chi)$ is the analog in the metric measure space $(G,d_C, \mu_\chi)$ of the Hardy space defined by Goldberg in the Euclidean setting~\cite{G}. They were introduced by Meda and Volpi~\cite{MedaVolpi} and Taylor~\cite{T} in more general contexts. Here and in the following, we denote by $\mathcal{B}_s$ the family of balls of radius $\leq s$. 

\begin{definition}\label{def:Hardy}
A standard $\mathfrak{h}^1$-atom is a function $a$ in $L^1(\mu_\chi)$ supported in a ball $B\in \mathcal{B}_1$ such that 
\begin{itemize}
\item $\|a\|_{L^2(\mu_\chi)}\leq \mu_\chi(B)^{-1/2}$, and
\item $\int a \,\dd\mu_\chi=0$.
\end{itemize}
A global $\mathfrak{h}^1$-atom is a function $a$ in $L^1(\mu_\chi)$ supported in a ball $B$ of radius $1$ such that $\|a\|_{L^2(\mu_\chi)}\leq \mu_\chi(B)^{-1/2}$. We shall refer to standard and global $\mathfrak{h}^1$-atoms only as \emph{admissible} atoms.   

The space $\mathfrak{h}^1(\mu_\chi)$ is the space of functions $f$ in $L^1(\mu_\chi)$ such that $f=\sum_j c_ja_j$, where $(c_j)\in \ell^1$ and $a_j$ are admissible atoms. The norm $\|f\|_{\mathfrak{h}^1(\mu_\chi)}$ is defined as the infimum of $\|(c_j)\|_{\ell^1}$ over all atomic decompositions of $f$. 
\end{definition}

\begin{definition}
The space $\mathfrak{bmo}(\mu_\chi)$ is the space of all equivalence classes of locally integrable functions $g$ modulo constants such that  
\[
\begin{aligned}
\|g\|_{\mathfrak{bmo}(\mu_\chi)} \coloneqq \sup_{B\in\mathcal B_1} \left( \frac{1}{\mu_\chi(B)} \int_B |g-g_B|^2 \, \dd\mu_\chi \right)^{1/2} +  \sup_{x\in G} \left( \frac{1}{\mu_\chi(B(x,1))}  \int_{B(x,1)} |g|^2\,  \dd\mu_\chi \right)^{1/2} 
\end{aligned}
\]
is finite, where $g_B = \mu_\chi(B)^{-1} \int_B g \, \dd \mu_\chi$.
\end{definition}
By~\cite[Theorem 2]{MedaVolpi} the dual of $\mathfrak h^1(\mu_\chi)$ can be identified with $\mathfrak{bmo}(\mu_\chi)$. We also recall that, if one considers admissible atoms as in Definition~\ref{def:Hardy} supported in balls in $\mathcal{B}_s$ for any fixed positive $s$, then the corresponding atomic Hardy space coincides with $\mathfrak{h}^1(\mu_\chi)$ and has an equivalent norm (cf.~\cite[Proposition 1]{MedaVolpi}). Moreover, the spaces $L^p(\mu_\chi)$ when $p\in(1,2)$ (resp.\ $p\in(2,\infty)$) are the complex interpolation spaces between $\mathfrak{h}^1(\mu_\chi)$ (resp.\ $\mathfrak{bmo}(\mu_\chi)$) and $L^2(\mu_\chi)$, as shown in~\cite[Theorem 5]{MedaVolpi}.

\subsection{The weighted sub-Laplacian $\Delta_{\chi}$}	\label{Sec_Deltachi} 
Let $\chi$ be a positive character of $G$ and let $c_i =(X_i \chi)(e)$, $i\in \mathfrak{l}$. It is easy to see (cf.~\cite[p.\ 124]{VSCC}) that 
\begin{equation}\label{Xdelta}
X_i \chi = c_i \chi  \qquad i=1,\dots, \ell.
\end{equation}
Starting from~\eqref{Xdelta}, a straightforward computation shows that the formal adjoint of $X_i$ on $L^2(\mu_\chi)$ is $X^{*,\chi}_i =-X_i - c_i$. Therefore, the operator 
\[
\sum_{i=1}^\ell X_i^{*,\chi }X_i = \Delta -\sum_{i=1}^\ell c_i X_i
\]
with domain $C_c^\infty(G)$ is symmetric and densely defined on $L^{2}(\mu_\chi)$. Thus it is closable, and we denote by $\Delta_\chi$ its closure. We shall denote with $X$  the drift term $\sum_{i=1}^\ell c_i X_i$.

If we consider (see also~\cite{HMM}) the unitary operator
\[
\mathcal{U}_2 \colon L^{2}(\mu_\chi) \to L^2(\rho), \qquad \mathcal{U}_2 f \coloneqq f\chi^{1/2},
\]
then a simple computation shows that
\begin{equation}\label{unitary_equivalence}
\mathcal{U}_2 \Delta_\chi  \mathcal{U}_2^{-1} = \Delta + b_X^2 I, \qquad b_X \coloneqq {\textstyle\frac{1}{2}} \|X\| \eqqcolon {\textstyle \frac{1}{2}}  \left(\sum\nolimits_{i=1}^\ell c_i^2\right)^{1/2}. 
\end{equation}
Let us consider the generated semigroup $e^{-t\Delta_\chi }$ on $L^{2}(\mu_\chi)$, which (by Schwartz's kernel theorem) admits an integral kernel $P_t^\chi \in \mathcal{D'}(G\times G)$   
\[
e^{-t \Delta_\chi} f(x) = \int_G P_t^\chi(x,y)f(y)\, \dd \mu_\chi(y) .
\]
Since $\Delta_\chi$ is left-invariant, the heat semigroup also admits a convolution kernel  $p_t^\chi\in \mathcal{D'}(G)$, i.e.\  
\[
e^{-t\Delta_\chi} f(x) = f* p_t^\chi (x)= \int_G f(xy^{-1})p_t^\chi(y)\, \dd \rho(y).
\]
Observe that $P_t^\chi$ and $p_t^\chi$ are related by the equality
\begin{equation}\label{Ptgamma_ptgamma}
P_t^\chi(x,y)= p_t^\chi(y^{-1}x)\chi^{-1}(y)\delta(y).
\end{equation} 
By Hunt's theorem~\cite{Hunt}, $p_t^\chi$ are probability densities. By~\eqref{unitary_equivalence} and the spectral theorem 
\[
e^{-t\Delta_\chi} =  e^{-t b_X^2} \mathcal{U}_2^{-1}  e^{-t\Delta} \mathcal{U}_2 ,
\]
so that
\begin{align*}
P_t^\chi(x,y)
&= e^{-t b_X^2}P_t(x,y) \chi^{-1/2}(x)\chi^{1/2}(y) \nonumber \\
& = e^{-t b_X^2}p_t(y^{-1}x) \chi^{-1/2}(y^{-1}x)\chi^{-1}(y)\delta(y).
\end{align*}
This and~\eqref{Ptgamma_ptgamma} show that
\begin{equation}\label{ptgamma_pt}
p_t^\chi(x) = e^{-t b_X^2} p_t(x) \chi^{-1/2}(x),
\end{equation}
so that both $p_t^\chi$ and $P_t^\chi$ are smooth on $G$ and $G\times G$, respectively. In the following lemma, we provide various estimates of the heat kernel $p_t^\chi$ and of its derivatives that will be of fundamental importance in the following.  

\begin{lemma}\label{estimates_ptgamma} 
The following hold:
\begin{itemize}
\item[(i)] $e^{-t\Delta_\chi}$ is a diffusion semigroup on $(G, \mu_\chi)$;
\smallskip
\item[(ii)] for every $r>1$, $\int_{B_r} \chi \, \dd \rho \leq e^{(\|X\| + D) r}$. 
\end{itemize}
Moreover, there exist two constants $\omega, b>0$ such that, for every $m\in \N$ and $J\in \mathfrak{l}^m$, 
\begin{itemize}
\item[(iii)]  $|X_J p_t^\chi(x)|\lesssim \chi^{-1/2}(x) t^{-(d+m)/2} e^{\omega t} e^{-b|x|^2/t}$, for every $t>0$ and $x\in G$.
\end{itemize}
\end{lemma}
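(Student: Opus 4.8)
The plan for (i) is to check directly the defining properties of a symmetric diffusion semigroup (in the sense of Stein): symmetry on $L^2(\mu_\chi)$, positivity preservation, the Markov property $e^{-t\Delta_\chi}1=1$, and contractivity on every $L^p(\mu_\chi)$. Symmetry is already built into the construction of $\Delta_\chi$, which is essentially self-adjoint on $L^2(\mu_\chi)$ by Hebisch--Mauceri--Meda, so $e^{-t\Delta_\chi}$ is self-adjoint. By Hunt's theorem \cite{Hunt} the kernels $p_t^\chi$ are probability densities, hence $p_t^\chi\ge 0$ (so $e^{-t\Delta_\chi}$ preserves positivity) and $\int_G p_t^\chi\,\dd\rho=1$, which gives $e^{-t\Delta_\chi}1=1*p_t^\chi\equiv 1$. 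Self-adjointness together with $e^{-t\Delta_\chi}1=1$ yields $\int e^{-t\Delta_\chi}f\,\dd\mu_\chi=\int f\,\dd\mu_\chi$; combined with positivity this makes $e^{-t\Delta_\chi}$ a contraction on $L^1(\mu_\chi)$ and on $L^\infty$, and the remaining $L^p$-contractions follow by interpolation.

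For (ii) I would first record that $\chi$ grows at most exponentially in the distance. Since $\log\chi$ is a homomorphism of $G$ into $(\R,+)$ and $X_i\log\chi=c_i$ by \eqref{Xdelta}, the horizontal gradient of $\log\chi$ is the constant covector $(c_1,\dots,c_\ell)$, of length $(\sum_i c_i^2)^{1/2}=\|X\|$; hence $\log\chi$ is $\|X\|$-Lipschitz for $d_C$ and $\chi(x)\le e^{\|X\|\,|x|}$. Bounding $\chi$ by $e^{\|X\|r}$ on $B_r$ and using the global volume estimate \eqref{pallegrandi},
\[
\int_{B_r}\chi\,\dd\rho\le e^{\|X\|r}\,V(r)\lesssim e^{(\|X\|+D)r},\qquad r>1,
\]
the implicit constant being absorbed into $D$ for $r>1$; this is (ii).

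The substance is (iii), which I would obtain by transferring the known Gaussian bounds for the sum-of-squares kernel $p_t$ through the identity \eqref{ptgamma_pt}. The key algebraic point is that $\chi^{-1/2}$ is a common eigenfunction of the fields, $X_i\chi^{-1/2}=-\tfrac{c_i}{2}\chi^{-1/2}$ by \eqref{Xdelta}. Peeling off one field at a time and using the Leibniz rule, an induction on $m$ gives
\[
X_J\big(p_t\chi^{-1/2}\big)=\chi^{-1/2}\sum_{J'\preceq J}a_{J'}\,X_{J'}p_t,
\]
the sum running over the ordered subwords $J'$ of $J$, with constants $a_{J'}$ bounded in terms of $\|X\|$ and $m$. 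I would then invoke the classical Gaussian estimates for left-invariant derivatives of the heat kernel of $\Delta$ on a connected Lie group (see \cite{VSCC}): for $|J'|=m'$,
\[
|X_{J'}p_t(x)|\lesssim t^{-(d+m')/2}e^{\omega t}e^{-b|x|^2/t},\qquad t>0,\ x\in G.
\]
For $t\le 1$ the most singular contribution is $J'=J$, giving $t^{-(d+m)/2}$ and dominating the lower-order subwords; for $t>1$ the polynomial discrepancy $t^{(m-m')/2}$ is absorbed into $e^{\omega t}$ after enlarging $\omega$. Multiplying by $e^{-tb_X^2}\chi^{-1/2}(x)$ and recalling \eqref{ptgamma_pt} then yields (iii).

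I expect the genuine analytic input, and hence the main obstacle, to be the Gaussian derivative bounds for $p_t$ used in (iii): once these are granted, the eigenfunction identity, the Leibniz reduction, and the semigroup and Lipschitz arguments of (i)--(ii) are elementary. The one point requiring care is the uniform-in-$t$ bookkeeping when summing the subword contributions, since their different powers of $t$ must be merged into the single clean factor $t^{-(d+m)/2}e^{\omega t}$; this is precisely why the exponential-in-time correction (unavoidable anyway when $G$ has exponential volume growth) is built into the statement.
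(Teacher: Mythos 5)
Your proposal is correct and follows essentially the same route as the paper: part (iii) is obtained exactly as in the text, by combining \eqref{ptgamma_pt} with the Gaussian derivative estimates for $X_{J'}p_t$ (the paper cites \cite{PV}, eq.~(2.7)) and the observation that left-invariant derivatives of a positive character reproduce the character, with the different powers of $t$ merged into $t^{-(d+m)/2}e^{\omega t}$. For (i) and (ii) the paper simply cites \cite{HMM} (Propositions 3.1(ii) and 5.7(ii)) for the diffusion property and for the bound $\sup_{B_r}\chi\le e^{\|X\| r}$, whereas you supply the standard self-adjointness/Hunt/interpolation and Lipschitz arguments behind those citations; both are sound.
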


\begin{proof}
The point~(i) is the content of~\cite[Proposition 3.1, (ii)]{HMM}. The point~(ii) is a consequence of~\cite[Proposition~5.7~(ii)]{HMM} and~\eqref{pallegrandi}, which yield
\[
\int_{B_r} \chi \, \dd \rho \leq \left(\sup_{x\in B_r }\chi(x)\right) V(r) \leq e^{\|X\|r}e^{D r}.
\]
Finally,~(iii) follows from~\cite[Section 2.1, eq.~(2.7)]{PV} together with~\eqref{ptgamma_pt} and the observation that if $f$ is a smooth function, then
\[
X_J (\chi  f) =\chi \sum_{I\subseteq J} c(I,\chi) X_I f
\]
for suitable coefficients $c(I,\chi)$, by~\eqref{Xdelta}.
\end{proof}

\section{Sobolev spaces}\label{Sec_Sobolev}
The operator $\Delta_\chi$ on $L^2(\mu_\chi)$ generates a contraction semigroup $e^{-t\Delta_\chi}$ as explained above. This semigroup extends to a bounded (contraction) semigroup on $L^p(\mu_\chi)$ for every $p\in [1,\infty)$ (see e.g.~\cite[Proposition 3.1, (ii)]{HMM}) whose infinitesimal generator, with a slight abuse of notation, we still denote with $\Delta_{\chi}$. For every $p\in (1,\infty)$, the closed operator $\Delta_{\chi}$ satisfies the assumption of Komatsu~\cite[p.\ 286]{Komatsu1} and the fractional powers of $\Delta_{\chi}$ may then be defined following~\cite[Sect.\ 4]{Komatsu1}. Therefore, for $\alpha\geq 0$ we define 
\[
L^p_\alpha(\mu_\chi) \coloneqq \left\{f\in L^p(\mu_\chi) \colon   \Delta_{\chi}^{\alpha/2}f\in L^p(\mu_\chi)\right\} 
\]
endowed with the norm
\[
\|f\|_{L^p_\alpha(\mu_\chi)}\coloneqq \|f\|_{L^p(\mu_\chi)} +  \|\Delta_{\chi}^{\alpha/2}f\|_{L^p(\mu_\chi)} .
\]
Observe that by a result of Komatsu~\cite[Theorem 6.4]{Komatsu1} together with the boundedness of the operator $(\Delta_\chi +cI)^{-\beta}$ on $L^p(\mu_\chi)$ for every $1<p<\infty$, $\beta\geq 0$ and $c>0$, we obtain that 
\begin{equation}\label{equivNorms}
\|f \|_{L^p_\alpha(\mu_\chi)}\approx  \| (\Delta_\chi +c I)^{\alpha/2} f\|_{L^p(\mu_\chi)}.
\end{equation}
and moreover (see~\cite[Section 5]{Komatsu1})
\[
\| (\Delta_\chi +c I)^{\alpha_2 /2} f\|_{L^p(\mu_\chi)} \leq \| (\Delta_\chi +c I)^{\alpha_1 /2}f \|_{L^p(\mu_\chi)} 
\]
whenever $\alpha_1>\alpha_2$, which imply the continuity of the embedding $L^p_{\alpha_1} (\mu_\chi)\hookrightarrow L^p_{\alpha_2}(\mu_\chi)$. 
 
\smallskip

We now state some interpolation properties of the Sobolev spaces. Given a compatible couple of Banach spaces $A$ and $B$, we denote with $[A,B]_\theta$ the intermediate space of index $\theta \in (0,1)$ in the complex method (see~\cite{BerghLofstrom}). 

\begin{lemma}\label{lemma_interpolation}
Let $\theta\in (0,1)$ and $\chi_0$, $\chi_1$ be positive characters of $G$.
\begin{itemize}
\item[(i)] If $0\leq \alpha< \beta$ and $p>1$, then
\begin{equation}\label{intepolation_alphabeta}
[L^p_{\alpha}(\mu_\chi), L^p_{\beta}(\mu_\chi)]_\theta = L^p_{(1-\theta) \alpha + \theta\beta}(\mu_\chi);
\end{equation}
\item[(ii)] if $1\leq p_0,p_1 <\infty$, then
\begin{equation}\label{intepolation_ellr}
[L^{p_0}(\mu_{\chi_0}), L^{p_1}(\mu_{\chi_1})]_\theta = L^{p_\theta}(\mu_{\chi_\theta})
\end{equation}
where
\[
\frac{1}{p_\theta}= \frac{1-\theta}{p_0} + \frac{\theta}{p_1}, \qquad \chi_\theta= \chi_0^{p(1-\theta)/p_0} \chi_1^{p\theta/p_1}. 
\]
\end{itemize}
\end{lemma}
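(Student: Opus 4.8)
The plan is to prove the two interpolation identities separately, reducing each to a known abstract interpolation principle combined with the spectral/unitary structure already established in the excerpt.

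\textbf{Part (i).} For the scale $L^p_\alpha(\mu_\chi)$ at fixed $p$ and fixed $\chi$, the natural approach is to identify $L^p_\alpha(\mu_\chi)$ as the domain of a fractional power of a fixed operator and then invoke the Komatsu--Yosida theory already cited. Concretely, I would use the equivalence of norms~\eqref{equivNorms}, which says that for any $c>0$ the map $f\mapsto (\Delta_\chi+cI)^{\alpha/2}f$ computes the $L^p_\alpha(\mu_\chi)$-norm up to constants. The operator $\Delta_\chi+cI$ is, by~\eqref{unitary_equivalence}, unitarily equivalent (via $\mathcal{U}_2$ extended to $L^p$) to a shifted sum-of-squares operator, and in particular it is a nonnegative operator with bounded imaginary powers on $L^p(\mu_\chi)$. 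The key point is that the complex interpolation of domains of fractional powers of a sectorial operator with bounded imaginary powers reproduces the intermediate fractional power: for such an operator $A$ one has $[\Dom(A^{\alpha/2}),\Dom(A^{\beta/2})]_\theta=\Dom(A^{((1-\theta)\alpha+\theta\beta)/2})$ with equivalence of graph norms. Applying this with $A=\Delta_\chi+cI$ and translating back through~\eqref{equivNorms} yields~\eqref{intepolation_alphabeta}. The only thing to check is that $\Delta_\chi$ (equivalently $\Delta+b_X^2$) has bounded imaginary powers on $L^p(\mu_\chi)$; this follows from the functional calculus for sub-Laplacians, or directly from the unitary equivalence to $\Delta+b_X^2I$ on $L^p(\rho)$ together with the known $H^\infty$-calculus for the shifted sum-of-squares operator.

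\textbf{Part (ii).} For the $L^p(\mu_\chi)$ scale with varying exponent and varying character, the idea is to transfer to ordinary weighted Lebesgue interpolation. Writing $\dd\mu_{\chi_i}=\chi_i\,\dd\rho$, an element of $L^{p_i}(\mu_{\chi_i})$ is nothing but a function $f$ with $f\chi_i^{1/p_i}\in L^{p_i}(\rho)$. Thus the multiplication operator $f\mapsto f\chi_i^{1/p_i}$ is an isometric isomorphism $L^{p_i}(\mu_{\chi_i})\to L^{p_i}(\rho)$, and one is reduced to the classical Stein--Weiss/Calder\'on complex interpolation of $L^{p_0}(\rho)$ and $L^{p_1}(\rho)$ with a change of weight. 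I would set up the analytic family of multiplication operators $T_z f= f\,\chi_0^{(1-z)/p_0}\chi_1^{z/p_1}(\text{normalizing factor})$ and check that it interpolates the two endpoint isometries; since multiplication by a character is pointwise, the admissibility estimates on the strip $0\le\Re z\le 1$ are immediate from the positivity and boundedness of characters on bounded sets. Computing the weight at $z=\theta$ gives the exponent $1/p_\theta=(1-\theta)/p_0+\theta/p_1$ and the character $\chi_\theta=\chi_0^{p_\theta(1-\theta)/p_0}\chi_1^{p_\theta\theta/p_1}$, matching the stated formula (here $p=p_\theta$ in the displayed exponents). The identification then follows from Calder\'on's theorem on complex interpolation of $L^p$ spaces with change of measure.

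\textbf{Main obstacle.} The routine parts are the weight bookkeeping in (ii) and the citation of Komatsu's machinery in (i); neither is delicate. The genuine content is the bounded-imaginary-powers (equivalently $H^\infty$-functional-calculus) property of $\Delta_\chi$ on $L^p(\mu_\chi)$ needed to run the domain-interpolation theorem in Part (i). I expect this to be the crux: while it follows in principle from the unitary equivalence~\eqref{unitary_equivalence} to $\Delta+b_X^2I$ and the heat kernel Gaussian bounds of Lemma~\ref{estimates_ptgamma}(iii), one must either quote a precise transference result for the imaginary powers across the $L^p(\mu_\chi)\leftrightarrow L^p(\rho)$ correspondence or argue directly that the shifted sum-of-squares operator admits such a calculus on the nonunimodular group. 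Care is also needed because the unitary equivalence is stated only at the $L^2$ level, so I would first verify that $\mathcal{U}_2$, up to the character weight, extends to the needed isomorphism intertwining the fractional powers on $L^p$.
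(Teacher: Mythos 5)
Your overall strategy coincides with the paper's: part (i) is obtained from \eqref{equivNorms} together with the interpolation theorem for domains of fractional powers of a positive operator with bounded imaginary powers (Triebel \cite{Triebel}), and part (ii) is exactly the Stein--Weiss theorem on complex interpolation of $L^p$ spaces with change of measure, which the paper simply quotes as \cite[Theorem 5.5.3]{BerghLofstrom}; your analytic-family derivation of the latter is correct but re-proves a standard result.

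The one step where your argument as written does not go through is precisely the one you single out as the crux: the boundedness of the imaginary powers of $\Delta_\chi$ on $L^p(\mu_\chi)$. The transference you propose via \eqref{unitary_equivalence} fails for $p\neq 2$, because $\mathcal{U}_2$ is multiplication by $\chi^{1/2}$ and therefore maps $L^p(\mu_\chi)$ isometrically onto $L^p(\mu_{\chi^{1-p/2}})$, not onto $L^p(\rho)$; conjugating $\Delta_\chi$ by $\mathcal{U}_2$ thus yields $\Delta+b_X^2I$ acting on a weighted $L^p$ space for which no $H^\infty$-calculus or imaginary-power bound is among the facts you could quote. (The isomorphism that does land in $L^p(\rho)$, namely $\mathcal{U}_p\colon f\mapsto \chi^{1/p}f$ of Proposition~\ref{deltadentro}, does not carry $\Delta_\chi$ to $\Delta+b_X^2I$: a residual drift term survives when $p\neq2$.) The repair requires no transference at all: Lemma~\ref{estimates_ptgamma}~(i) states that $e^{-t\Delta_\chi}$ is a diffusion semigroup on $(G,\mu_\chi)$, so the Littlewood--Paley--Stein theory in the form of \cite{Meda} gives the boundedness of $\Delta_\chi^{iu}$ on $L^p(\mu_\chi)$ for every $1<p<\infty$ directly on the weighted space. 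This is exactly the citation the paper uses, and with it the remainder of your part (i) closes as intended.
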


\begin{proof}
We first prove (i). Since for every $c>0$ the operator $\Delta_{\chi} + I$ is positive on $L^p(\mu_\chi)$ in the sense of Triebel~\cite[Chapter 1.14.1, p.\ 91]{Triebel},~\eqref{intepolation_alphabeta} follows from the $L^p(\mu_\chi)$-boundedness ($1<p<\infty$) of the imaginary powers $\Delta_\chi^{iu}$ (see~\cite{Meda}) together with~\cite[Chapter 1.15.3, p.\ 103]{Triebel} and~\eqref{equivNorms}.  
 Case (ii)\ is a special case of \cite[Theorem 5.5.3]{BerghLofstrom}. 
\end{proof}

We provide	now a characterization of the Sobolev spaces in the case when the regularity is an integer (Proposition~\ref{prop:kintero}) and a recursive characterization (Proposition~\ref{alpha+1}), in the same spirit of~\cite{CRTN, PV}. Both results hinge on the $L^p(\mu_\chi)$-boundedness ($1<p<\infty$) of the local Riesz transforms of any order associated with $\Delta_\chi$ which are stated in the following theorem, together with endpoint results at $p=1$ and $p=\infty$ involving the spaces $\mathfrak{h}^1(\mu_\chi)$ and $\mathfrak{bmo}(\mu_\chi)$ introduced in Subsection~\ref{Subsec_HardyBMO}. 
\begin{theorem}\label{teoLocalRiesz}
Let $\chi$ be a positive character of $G$. If $c>0$ is large enough, then for every $m \in \N$ and $J\in \mathfrak{l}^m$ the local Riesz transforms $ X_J (\Delta_\chi +cI)^{-m/2}$ are bounded from $\mathfrak{h}^1(\mu_\chi)$ to $L^1(\mu_\chi)$, from $L^\infty(\mu_\chi)$ to $\mathfrak{bmo}(\mu_\chi)$ and on $L^p(\mu_\chi)$ for every $p\in (1,\infty)$. 
\end{theorem}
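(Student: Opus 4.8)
The plan is to obtain the $L^p(\mu_\chi)$ bounds for $1<p<\infty$ by interpolating between an $L^2(\mu_\chi)$ estimate and the two endpoint estimates, within the Meda--Volpi framework recalled in Subsection~\ref{Subsec_HardyBMO}. Since $L^p(\mu_\chi)$ is the complex interpolation space between $\mathfrak{h}^1(\mu_\chi)$ and $L^2(\mu_\chi)$ for $p\in(1,2)$ and between $L^2(\mu_\chi)$ and $\mathfrak{bmo}(\mu_\chi)$ for $p\in(2,\infty)$, it suffices to prove: (a) $X_J(\Delta_\chi+cI)^{-m/2}$ is bounded on $L^2(\mu_\chi)$; (b) it is bounded from $\mathfrak{h}^1(\mu_\chi)$ to $L^1(\mu_\chi)$; and (c) it is bounded from $L^\infty(\mu_\chi)$ to $\mathfrak{bmo}(\mu_\chi)$. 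I would deduce (c) from (b) by duality: the $L^2(\mu_\chi)$-adjoint of $X_J(\Delta_\chi+cI)^{-m/2}$ is $(\Delta_\chi+cI)^{-m/2}X_J^{*,\chi}$, a local Riesz transform of the same type (with the vector fields on the right of the resolvent), whose convolution kernel satisfies the same estimates as the ones derived below; since $\mathfrak{bmo}(\mu_\chi)=(\mathfrak{h}^1(\mu_\chi))^*$, the $\mathfrak{h}^1\to L^1$ bound for this adjoint yields~(c).

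For~(a) I would exploit the unitary equivalence~\eqref{unitary_equivalence}. Conjugating by $\mathcal{U}_2$ and using that $\chi^{\pm1/2}$ are character powers, so that $\mathcal{U}_2 X_J \mathcal{U}_2^{-1}=\sum_{I\subseteq J}c_I X_I$ for suitable constants $c_I$ by~\eqref{Xdelta}, one finds
\[
\mathcal{U}_2\, X_J(\Delta_\chi+cI)^{-m/2}\,\mathcal{U}_2^{-1}=\Big(\sum_{I\subseteq J}c_I\,X_I\Big)\big(\Delta+(b_X^2+c)I\big)^{-m/2}
\]
on $L^2(\rho)$. Since $\mathcal{U}_2$ is unitary from $L^2(\mu_\chi)$ onto $L^2(\rho)$, the claim reduces to the $L^2(\rho)$-boundedness of the local Riesz transforms $X_I(\Delta+aI)^{-m/2}$ with $|I|\le m$ and $a>0$ for the drift-free sum-of-squares operator $\Delta$; these are classical and follow from the Gaussian heat-kernel bounds for $\Delta$ (or from $\|X_I f\|_{L^2(\rho)}\lesssim\|(\Delta+aI)^{|I|/2}f\|_{L^2(\rho)}$ via integration by parts and spectral calculus).

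The core of the argument is~(b), for which I would first derive pointwise kernel estimates. By subordination the convolution kernel of $X_J(\Delta_\chi+cI)^{-m/2}$ is
\[
k^\chi_J(x)=\frac{1}{\Gamma(m/2)}\int_0^\infty t^{m/2-1}e^{-ct}\,X_Jp_t^\chi(x)\,\dd t ,
\]
and the associated integral kernel with respect to $\mu_\chi$ is $K^\chi_J(x,y)=k^\chi_J(y^{-1}x)\chi^{-1}(y)\delta(y)$, in analogy with~\eqref{Ptgamma_ptgamma}. Inserting the bound of Lemma~\ref{estimates_ptgamma}(iii) and choosing $c>\omega$, the $t$-integral becomes a standard Bessel-type integral $\int_0^\infty t^{-d/2-1}e^{-(c-\omega)t}e^{-b|x|^2/t}\,\dd t$, which produces a local singularity of order $|x|^{-d}$ and exponential decay at infinity, weighted by $\chi^{-1/2}$; applying the same computation to $J'=(i,J)$ gives the companion regularity bound for $X_i k^\chi_J$. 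Thus $k^\chi_J$ carries the local Calderón--Zygmund size and smoothness, and $K^\chi_J$ satisfies the corresponding local integral (Hörmander-type) conditions.

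With these estimates, (b) is proved by testing on admissible atoms. For a standard atom $a$ supported in $B\in\mathcal{B}_1$ I would split $\int_G|X_J(\Delta_\chi+cI)^{-m/2}a|\,\dd\mu_\chi$ over $2B$ and its complement: on $2B$ one uses Hölder's inequality, the $L^2(\mu_\chi)$-bound from~(a), the normalisation of $a$, and the local doubling of $(G,d_C,\mu_\chi)$; on $G\setminus 2B$ one uses the vanishing integral of $a$ together with the regularity of $K^\chi_J$, the exponential decay controlling the far region. A global atom (no cancellation, radius comparable to $1$) is handled using only the size estimate and exponential decay. \textbf{The main obstacle} will be precisely the interplay of the weight $\chi^{-1/2}$ carried by $k^\chi_J$ with the measure factor $\chi^{-1}\delta$ in $K^\chi_J$: one must verify the local Calderón--Zygmund conditions \emph{uniformly in the center of the ball}, despite the exponential growth of $\chi$ and $\delta$ over $G$. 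This is exactly where the local doubling estimate~\eqref{localdoubmugamma} and the restriction to $\mathcal{B}_1$ (the \emph{local} nature of the Hardy/BMO spaces and of the Riesz transforms) are essential, with the behaviour away from the diagonal tamed only through the exponential decay of the kernel.
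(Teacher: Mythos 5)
Your overall architecture (an $L^2(\mu_\chi)$ bound via the unitary equivalence \eqref{unitary_equivalence}, the two endpoint bounds, and the Meda--Volpi interpolation) is the same as the paper's, and your size estimates for the subordinated kernel are correct. But there is a genuine gap in the core step (b). The regularity estimate you propose --- bounding $X_i k^\chi_J$ by running the same computation with $J'=(i,J)$ --- controls the kernel $k^\chi_J$ under \emph{right} translations of its argument, i.e.\ the regularity of $K^\chi_J(x,y)=k^\chi_J(y^{-1}x)\chi^{-1}(y)\delta(y)$ in the \emph{first} variable $x$. The $\mathfrak h^1\to L^1$ atomic estimate, however, needs the Hörmander condition in the \emph{second} variable: for $y,z\in B$ one must control $k^\chi_J(y^{-1}x)-k^\chi_J(z^{-1}x)$, and writing $z=yw$ this is $k^\chi_J(v)-k^\chi_J(w^{-1}v)$, a \emph{left} translation of the argument. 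Left translations are generated by right-invariant vector fields, and bounds on right-invariant derivatives of $p_t^\chi$ do \emph{not} follow from Lemma~\ref{estimates_ptgamma}(iii), which only concerns the left-invariant $X_J p_t^\chi$. (On $\R^n$ or on an abelian group the distinction is invisible, which is probably why it slipped by; here, with a noncommutative and possibly nonunimodular $G$, it is essential.) This is exactly the point where the paper has to bring in a genuinely new ingredient: the function $(t,y)\mapsto X_{J,x}P_t^\chi(x,y)$ solves the heat equation in $(t,y)$, and the parabolic Harnack inequality of Saloff-Coste yields a Hölder modulus of continuity in $y$ with some exponent $\epsilon\in(0,1)$ (Lemma~\ref{lemmaQt}), leading to an $L^2$-averaged, Gaussian-weighted Hörmander condition rather than the pointwise $|v|^{-d-1}$ gradient bound you assume. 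Without this (or an equivalent substitute) your treatment of the far region $G\setminus 2B$ for standard atoms does not go through.

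A second, related slip is in the division of labour between the two endpoints. Your duality scheme is legitimate in principle: the adjoint $T^*=(\Delta_\chi+cI)^{-m/2}X_J^{*,\chi}$ has kernel $K^*(x,y)=K^\chi_J(y,x)$, so the $\mathfrak h^1\to L^1$ condition for $T^*$ is precisely the \emph{first}-variable Hörmander condition for $K^\chi_J$ --- the one your left-invariant derivative bounds do control, and the one the paper verifies directly in its Step~3 to get $L^\infty\to\mathfrak{bmo}$. So the endpoint you propose to obtain ``for free'' by duality is in fact the easy one, while the endpoint you propose to prove directly, $T\colon\mathfrak h^1(\mu_\chi)\to L^1(\mu_\chi)$, is the hard one and cannot be reached with the estimates you have. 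You would also need to decompose the kernel as the paper does ($k^0_l+k^0_g+k^\infty$ in \eqref{KJvari}), since the large-time and off-diagonal pieces are handled by showing they lie in $L^1(\mu_\chi)\cap L^1(\rho)$ (Lemma~\ref{lemma_convoluzione}), but that part of your sketch is consistent with the paper.
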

The proof of Theorem~\ref{teoLocalRiesz} is postponed to Section~\ref{Sec_Proof_Riesz}. We now proceed to state and prove the aforementioned characterizations of Sobolev norms. 

\begin{proposition}\label{prop:kintero}
Let $k\in \N$ and $p\in (1,\infty)$. Then
\[
\| f\|_{L^p_k(\mu_\chi)} \approx \sum_{ J\in \mathfrak{l}^m, \, m\leq k } \| X_J f\|_{L^p(\mu_\chi)}. 
\]
\end{proposition}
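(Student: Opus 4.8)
The plan is to reduce everything to the operator $(\Delta_\chi + cI)^{k/2}$ through the norm equivalence \eqref{equivNorms}, fixing once and for all a constant $c>0$ large enough that both \eqref{equivNorms} and Theorem~\ref{teoLocalRiesz} are valid, and then to exploit the boundedness of the local Riesz transforms together with the algebraic structure $\Delta_\chi = \sum_{i=1}^\ell X_i^{*,\chi} X_i$. All operator manipulations below are first carried out on $C_c^\infty(G)$ and then extended by density and the closedness of the operators involved. For the inequality $\sum_{J\in\mathfrak{l}^m,\, m\le k}\|X_J f\|_{L^p(\mu_\chi)} \lesssim \|f\|_{L^p_k(\mu_\chi)}$, I would write, for a multi-index $J$ with $|J|=m\le k$,
\[
X_J f = \big(X_J (\Delta_\chi + cI)^{-m/2}\big)\,(\Delta_\chi + cI)^{m/2} f.
\]
By Theorem~\ref{teoLocalRiesz} the local Riesz transform $X_J(\Delta_\chi + cI)^{-m/2}$ is bounded on $L^p(\mu_\chi)$, whence $\|X_J f\|_{L^p(\mu_\chi)}\lesssim \|(\Delta_\chi + cI)^{m/2}f\|_{L^p(\mu_\chi)}$; the monotonicity of $\|(\Delta_\chi + cI)^{\cdot/2}f\|_{L^p(\mu_\chi)}$ recorded after \eqref{equivNorms} bounds this by $\|(\Delta_\chi + cI)^{k/2}f\|_{L^p(\mu_\chi)}\approx \|f\|_{L^p_k(\mu_\chi)}$. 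Summing over the finitely many such $J$ gives the claim.

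For the reverse inequality it suffices, by \eqref{equivNorms}, to estimate $\|(\Delta_\chi + cI)^{k/2}f\|_{L^p(\mu_\chi)}$ by $\sum_{m\le k}\sum_{|J|=m}\|X_J f\|_{L^p(\mu_\chi)}$, and here I would distinguish parity. When $k=2n$ is even the argument is purely algebraic: expanding $(\Delta_\chi + cI)^n$ via $\Delta_\chi = -\sum_{i}(X_i^2 + c_i X_i)$ exhibits it as a left-invariant differential operator of order $2n=k$ with constant coefficients in the $X_i$, so $\|(\Delta_\chi + cI)^n f\|_{L^p(\mu_\chi)}\lesssim \sum_{|J|\le k}\|X_J f\|_{L^p(\mu_\chi)}$. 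The genuinely analytic point is the odd case, which I would reduce to the base estimate
\[
\|(\Delta_\chi + cI)^{1/2} g\|_{L^p(\mu_\chi)} \lesssim \|g\|_{L^p(\mu_\chi)} + \sum_{i=1}^\ell \|X_i g\|_{L^p(\mu_\chi)}.
\]

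To prove this base estimate I would write, using $\Delta_\chi + cI = \sum_i X_i^{*,\chi} X_i + cI$,
\[
(\Delta_\chi + cI)^{1/2} g = (\Delta_\chi + cI)^{-1/2}\Big(\sum_{i=1}^\ell X_i^{*,\chi}(X_i g) + c\, g\Big),
\]
and observe that each operator $(\Delta_\chi + cI)^{-1/2}X_i^{*,\chi}$ is bounded on $L^p(\mu_\chi)$: it is exactly the adjoint, with respect to the $L^p(\mu_\chi)$--$L^{p'}(\mu_\chi)$ pairing, of the Riesz transform $X_i(\Delta_\chi + cI)^{-1/2}$, which is bounded on $L^{p'}(\mu_\chi)$ by Theorem~\ref{teoLocalRiesz}; here one uses that $X_i^{*,\chi}$ is the formal adjoint of $X_i$ on $L^2(\mu_\chi)$ and that $(\Delta_\chi+cI)^{-1/2}$ is self-adjoint. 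Since $(\Delta_\chi+cI)^{-1/2}$ is itself bounded on $L^p(\mu_\chi)$, these facts combine to yield the base estimate. Finally, for $k=2n+1$ I would put $g=(\Delta_\chi+cI)^n f$ and factor $(\Delta_\chi+cI)^{k/2}=(\Delta_\chi+cI)^{1/2}(\Delta_\chi+cI)^n$; the base estimate then bounds $\|(\Delta_\chi+cI)^{k/2}f\|_{L^p(\mu_\chi)}$ by $\|g\|_{L^p(\mu_\chi)}+\sum_i\|X_i g\|_{L^p(\mu_\chi)}$, and since $(\Delta_\chi+cI)^n$ and $X_i(\Delta_\chi+cI)^n$ are differential operators of orders $2n$ and $2n+1=k$, both terms are controlled by $\sum_{|J|\le k}\|X_J f\|_{L^p(\mu_\chi)}$, completing the proof.

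The main obstacle I anticipate is precisely the odd-order (half-integer power) case, i.e.\ the base estimate for $(\Delta_\chi+cI)^{1/2}$: it is the only step that cannot be settled algebraically and genuinely requires the boundedness of the \emph{dual} local Riesz transforms. Some care is needed to justify identifying $(\Delta_\chi + cI)^{-1/2}X_i^{*,\chi}$ with the $L^p(\mu_\chi)$-adjoint of $X_i(\Delta_\chi+cI)^{-1/2}$, and to legitimize the compositions of unbounded operators by working on $C_c^\infty(G)$ first and passing to the limit.
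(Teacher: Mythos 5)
Your proposal is correct and follows essentially the same route as the paper: reduction via \eqref{equivNorms}, the local Riesz transforms of Theorem~\ref{teoLocalRiesz} for the inequality $\sum_J\|X_Jf\|_{L^p(\mu_\chi)}\lesssim\|f\|_{L^p_k(\mu_\chi)}$, and for the converse an algebraic expansion in the even case together with a $k=1$ base estimate obtained by duality against the Riesz transforms on $L^{p'}(\mu_\chi)$ (your identification of $(\Delta_\chi+cI)^{-1/2}X_i^{*,\chi}$ as the adjoint of $X_i(\Delta_\chi+cI)^{-1/2}$ is exactly the pairing computation the paper carries out explicitly). The only cosmetic difference is that the paper factors $(\Delta_\chi+cI)^{k/2}=(\Delta_\chi+cI)^{1/2}(\Delta_\chi+cI)^{(k-1)/2}$ for odd $k\ge 3$, which is the same as your $g=(\Delta_\chi+cI)^nf$ substitution.
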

\begin{proof}
By means of~\eqref{equivNorms}, it is enough to prove that for some $c>0$
\begin{equation}\label{cor:step}
\| (\Delta_\chi +c I)^{k/2} f\|_{L^p(\mu_\chi)} \approx \sum_{ J\in \mathfrak{l}^m, \, m\leq k } \| X_J f\|_{L^p(\mu_\chi)}.
\end{equation}
The $\gtrsim $ part of~\eqref{cor:step} amounts to saying 
\[
\| f\|_{L^p(\mu_\chi)} \gtrsim \sum_{ J\in \mathfrak{l}^m, \, m\leq k } \| X_J(\Delta_\chi +c I)^{-k/2} f\|_{L^p(\mu_\chi)},
\]
which holds because for every $J \in \mathfrak{l}^m$ with $m\leq k$,
\[
X_J(\Delta_\chi +c I)^{-k/2} = X_J(\Delta_\chi +c I)^{-m/2} (\Delta_\chi +c I)^{(m-k)/2}
\]
which is bounded on $L^p(\mu_\chi)$ by Theorem~\ref{teoLocalRiesz} and the boundedness of $(\Delta_\chi +c I)^{\beta/2}$ for every $\beta<0$. 

We now prove the inequality $\lesssim$. We first prove the case $k=1$. For every $f\in C_c^\infty(G)$ 
\begin{align*}
\|(\Delta_\chi+cI)^{1/2} f \|_{L^p(\mu_\chi)}
&= \sup \left\{ ((\Delta_\chi+cI)^{\frac12} f,g) \colon\|g\|_{L^{p'}(\mu_\chi) }=1\right\}\\ 
& =\sup \left\{((\Delta_\chi+cI) f,(\Delta_\chi+cI)^{-\frac12}g)\colon \|g\|_{L^{p'}(\mu_\chi)}=1\right\} \\ 
& =\sup \left\{ \sum\nolimits_{j=1}^\ell (X_j^{*,\chi}X_j f + \frac{c}{\ell}f ,(\Delta_\chi+cI)^{-\frac12}g)\colon  \|g\|_{L^{p'}(\mu_\chi)}=1\right\}\\ 
& \lesssim \|f\|_{L^p(\mu_\chi)} + \sup \left\{ \sum\nolimits_{j=1}^\ell (X_j f ,X_j(\Delta_\chi+cI)^{-\frac12}g)\colon \|g\|_{L^{p'}(\mu_\chi)}=1\right\} \\ 
& \lesssim \|f\|_{L^p(\mu_\chi)} + \sum\nolimits_{j=1}^\ell \|X_j f\|_{L^p(\mu_\chi)},
\end{align*}
where $(h,g)$ stands for the duality between $h\in L^p(\mu_\chi)$ and $g\in L^{p'}(\mu_\chi)$. In the last inequality we used the boundedness of the local Riesz transforms on $L^{p'}(\mu_\chi)$ (Theorem~\ref{teoLocalRiesz}). Therefore the inequality $\lesssim$ in~\eqref{cor:step} is proved for $k=1$. 

We can now prove it for every $k$. If $k$ is even, the inequality
\[
\|(\Delta_\chi+cI)^{k/2} f \|_{L^p(\mu_\chi)} \lesssim \sum_{ J\in \mathfrak{l}^m, \, m\leq k } \|X_J f\|_{L^p(\mu_\chi)} 
\]
is straightforward. If $k\geq 3$ is odd, then
\begin{align*}
\|(\Delta_\chi+cI)^{k/2} f \|_{L^p(\mu_\chi)} 
&= \|(\Delta_\chi+cI)^{1/2}(\Delta_\chi+cI)^{(k-1)/2} f \|_{L^p(\mu_\chi)} \\
& \lesssim \sum_{i=1}^\ell \|X_i(\Delta_\chi+cI)^{(k-1)/2} f \|_{L^p(\mu_\chi)} \lesssim \sum_{ J\in \mathfrak{l}^m, \, m\leq k } \|X_J f\|_{L^p(\mu_\chi)}
\end{align*}
since the part $\lesssim$ of~\eqref{cor:step} holds for $k=1$. The proof is complete.
\end{proof}

\begin{proposition}\label{alpha+1}
For every $\alpha\geq 0$ and $p\in (1,\infty)$, $f\in L^p_{\alpha+1}(\mu_\chi)$ if and only if $f\in L^p_\alpha(\mu_\chi)$ and $X_i f\in L^p_\alpha(\mu_\chi)$ for every $i\in \mathfrak{l}$. In particular
\[
\|f\|_{L^p_{\alpha+1}(\mu_\chi)} \approx \|f\|_{L^p_{\alpha}(\mu_\chi)} +\sum_{i=1}^\ell \|X_i f\|_{L^p_{\alpha}(\mu_\chi)}.
\]
\end{proposition}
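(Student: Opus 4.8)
The plan is to reduce everything, by the equivalence~\eqref{equivNorms}, to estimates involving $(\Delta_\chi+cI)^{s}$, fixing once and for all a $c>0$ large enough for Theorem~\ref{teoLocalRiesz} to apply, and to prove the two inequalities separately. Throughout I set $R_j\coloneqq X_j(\Delta_\chi+cI)^{-1/2}$, which is bounded on $L^p(\mu_\chi)$ for every $1<p<\infty$ by Theorem~\ref{teoLocalRiesz}; by duality, recalling $X_j^{*,\chi}=-X_j-c_j$, also $(\Delta_\chi+cI)^{-1/2}X_j$ is bounded. The equivalence to be proved reads, via~\eqref{equivNorms}, $\|(\Delta_\chi+cI)^{(\alpha+1)/2}f\|_{L^p(\mu_\chi)}\approx\|(\Delta_\chi+cI)^{\alpha/2}f\|_{L^p(\mu_\chi)}+\sum_i\|(\Delta_\chi+cI)^{\alpha/2}X_if\|_{L^p(\mu_\chi)}$.

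For the inequality $\gtrsim$, the embedding $L^p_{\alpha+1}(\mu_\chi)\hookrightarrow L^p_\alpha(\mu_\chi)$ is already known (Section~\ref{Sec_Sobolev}), so it remains to show that $X_i\colon L^p_{\alpha+1}(\mu_\chi)\to L^p_\alpha(\mu_\chi)$ is bounded. Proposition~\ref{prop:kintero} gives this at integer level: applying it to $X_if$ one gets $\|X_if\|_{L^p_k(\mu_\chi)}\approx\sum_{m\le k}\sum_{J\in\mathfrak l^m}\|X_JX_if\|_{L^p(\mu_\chi)}\lesssim\|f\|_{L^p_{k+1}(\mu_\chi)}$, since the words $X_JX_i$ exhaust all words of length $\le k+1$. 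Writing a general $\alpha$ as $(1-\theta)k_0+\theta k_1$ with $k_0,k_1$ consecutive integers and $\theta\in[0,1)$, complex interpolation (Lemma~\ref{lemma_interpolation}(i)) identifies $[L^p_{k_0+1},L^p_{k_1+1}]_\theta=L^p_{\alpha+1}(\mu_\chi)$ and $[L^p_{k_0},L^p_{k_1}]_\theta=L^p_{\alpha}(\mu_\chi)$, so boundedness of $X_i$ at the integer endpoints interpolates to level $\alpha$. This settles $\gtrsim$.

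For the inequality $\lesssim$ I would start from $\Delta_\chi=-\sum_j(X_j+c_j)X_j$, so that $(\Delta_\chi+cI)f=cf-\sum_j(X_j+c_j)X_jf$. Applying $(\Delta_\chi+cI)^{(\alpha-1)/2}$ and using~\eqref{equivNorms} reduces $\|f\|_{L^p_{\alpha+1}(\mu_\chi)}$ to a sum of terms, the worst of which is $(\Delta_\chi+cI)^{(\alpha-1)/2}X_j(X_jf)=\widetilde S_j\,(\Delta_\chi+cI)^{\alpha/2}(X_jf)$, all of them bounded by $\|f\|_{L^p_\alpha(\mu_\chi)}$ and $\|X_jf\|_{L^p_\alpha(\mu_\chi)}$ (the remaining ones only involve nonpositive powers) once one knows that
\[
\widetilde S_j\coloneqq(\Delta_\chi+cI)^{(\alpha-1)/2}X_j(\Delta_\chi+cI)^{-\alpha/2}=(\Delta_\chi+cI)^{(\alpha-1)/2}R_j(\Delta_\chi+cI)^{-(\alpha-1)/2}
\]
is bounded on $L^p(\mu_\chi)$. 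The boundedness of this conjugated first-order Riesz transform is the crux of the whole statement and the only place where non-commutativity genuinely intervenes, since $X_j$ does not commute with the fractional powers of $\Delta_\chi$; I expect it to be the main obstacle, everything else being either a direct interpolation from the integer case or a rearrangement of powers.

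To prove it I would show, more generally, that $T_z\coloneqq(\Delta_\chi+cI)^{z}X_j(\Delta_\chi+cI)^{-z-1/2}$ is bounded on $L^p(\mu_\chi)$ for every real $z\ge-1/2$, by Stein interpolation on strips of width at most one with nodes at $-\tfrac12,0,1,2,\dots$. At an integer $z=m\ge0$ I would expand $(\Delta_\chi+cI)^mX_j=X_j(\Delta_\chi+cI)^m+[(\Delta_\chi+cI)^m,X_j]$; since $(\Delta_\chi+cI)^m$ is a left-invariant polynomial in $\mathbf X$ with constant coefficients, so is the commutator, and its top-order part cancels, whence $[(\Delta_\chi+cI)^m,X_j]=\sum_{|K|\le 2m}a_KX_K$. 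Therefore
\[
T_m=R_j+\sum_{|K|\le 2m}a_K\,\big[X_K(\Delta_\chi+cI)^{-|K|/2}\big]\,(\Delta_\chi+cI)^{(|K|-2m-1)/2},
\]
which is bounded by Theorem~\ref{teoLocalRiesz} (for the higher-order local Riesz transforms $X_K(\Delta_\chi+cI)^{-|K|/2}$) together with the boundedness of the negative powers $(\Delta_\chi+cI)^{(|K|-2m-1)/2}$. On the boundary line $\Re z=m$ one has $T_{m+iu}=(\Delta_\chi+cI)^{-iu}T_m(\Delta_\chi+cI)^{iu}$, bounded with growth in $u$ admissible for Stein's theorem on a width-one strip thanks to the $L^p(\mu_\chi)$-boundedness of the imaginary powers of $\Delta_\chi$ (\cite{Meda}); together with the endpoint $\Re z=-\tfrac12$, where $T_z$ is a conjugate of the bounded operator $(\Delta_\chi+cI)^{-1/2}X_j$, interpolation yields $T_z$ bounded for all $z\ge-\tfrac12$, hence $\widetilde S_j$ at $z=(\alpha-1)/2$. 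Finally, running these a priori estimates on the dense class $C_c^\infty(G)$ and invoking the closedness of the operators gives both the norm equivalence and the characterization of membership in $L^p_{\alpha+1}(\mu_\chi)$.
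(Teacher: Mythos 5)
Your proof is correct and its skeleton --- reduce to the integer case via Proposition~\ref{prop:kintero}, then interpolate, with Theorem~\ref{teoLocalRiesz} supplying the integer-order information --- is the same as the paper's; the $\gtrsim$ half in particular coincides with the paper's argument. Where you diverge is in the key claim of the $\lesssim$ half. The paper isolates the single operator $R_i=(\Delta_\chi+I)^{-1/2}X_i$, shows it is bounded on $L^p_k(\mu_\chi)$ for every integer $k$ directly from Proposition~\ref{prop:kintero}, and then gets boundedness on every $L^p_\beta(\mu_\chi)$ by ordinary complex interpolation of that one fixed operator along the scale $[L^p_{k}(\mu_\chi),L^p_{k+1}(\mu_\chi)]_\theta=L^p_{\beta}(\mu_\chi)$ of Lemma~\ref{lemma_interpolation}; this is exactly your family $T_z$ in disguise, but no Stein interpolation and no commutator calculus are needed. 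Your analytic-family argument works too (the imaginary powers give admissible growth on the boundary lines), but the commutator expansion at the integer nodes is both superfluous --- Proposition~\ref{prop:kintero} already encodes the integer case --- and slightly misstated: writing $[(\Delta_\chi+cI)^m,X_j]$ as a combination of words $X_K$ in the H\"ormander system, the Leibniz rule produces words of length up to $2m+1$, not $2m$. The cancellation of the top-order part is a statement about differential-operator order, and since $[X_i,X_j]$ need not lie in the span of $\mathbf{X}$ it does not reduce the word length, which is the relevant notion of order for the local Riesz transforms. This is harmless here, because for $|K|=2m+1$ the factor $(\Delta_\chi+cI)^{(|K|-2m-1)/2}$ is the identity and $X_K(\Delta_\chi+cI)^{-|K|/2}$ is still bounded by Theorem~\ref{teoLocalRiesz}, but it is worth flagging.
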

\begin{proof}
We claim that the operators $R_i \coloneqq (\Delta_\chi+ I)^{-1/2}X_i$ are bounded on $L^p_\beta(\mu_\chi)$ for every $\beta\geq 0$ and $i\in \mathfrak{l}$. Assuming the claim, we prove the statement. 

We begin by proving the inequality $\gtrsim$. By~\eqref{equivNorms}, the continuity of the embedding $L^p_{\alpha+1}(\mu_\chi)\hookrightarrow L^p_\alpha(\mu_\chi)$ and the claim
\begin{align*}
\|f\|_{L^p_\alpha(\mu_\chi)} + \sum_{i=1}^\ell \|X_i f\|_{L^p_\alpha(\mu_\chi)} 
&\approx \|f\|_{L^p_\alpha(\mu_\chi)}  + \sum_{i=1}^\ell  \|(\Delta_\chi + I)^{(\alpha+1)/2} R_i f\|_{L^p(\mu_\chi)} \\
& \lesssim  \|f\|_{L^p_{\alpha+1}(\mu_\chi)} + \sum_{i=1}^\ell  \|R_i f\|_{L^p_{\alpha+1}(\mu_\chi)} \lesssim  \|f\|_{L^p_{\alpha+1}(\mu_\chi)}.
\end{align*}
The converse inequality may be obtained in a similar way, since by~\eqref{equivNorms} and the claim
\begin{align*}
\|f\|_{L^p_{\alpha+1}(\mu_\chi)}
&\approx \|(\Delta_\chi + I)^{-1/2}(\Delta_\chi + I) f\|_{L^p_\alpha(\mu_\chi)}  \\
& \lesssim \sum_{i=1}^\ell  \|(\Delta_\chi + I)^{-1/2} (X_i^2  - c_i X_i+1) f\|_{L^p_\alpha(\mu_\chi)} \lesssim  \|f\|_{L^p_{\alpha}(\mu_\chi)} + \sum_{i=1}^\ell  \|X_i f\|_{L^p_{\alpha}(\mu_\chi)}.
\end{align*}
It remains to prove the claim. By the interpolation properties~\eqref{intepolation_alphabeta} of the Sobolev spaces, it is enough to prove it when $\alpha=k$ is an integer. But this holds since by Proposition~\ref{prop:kintero}
\begin{align*}
\| (\Delta_\chi + I)^{-1/2} X_i f\|_{L^p_{k}(\mu_\chi)}
& \approx  \|(\Delta_\chi + I)^{(k-1)/2} X_i f\|_{L^p(\mu_\chi)} \\
& \lesssim \sum_{ J\in \mathfrak{l}^m, \, m\leq k } \|X_J f\|_{L^p(\mu_\chi)} \approx \| f\|_{L^p_k(\mu_\chi)},
\end{align*}
and the claim is proved.
\end{proof}
In the following proposition we provide an alternative characterization of the Sobolev norm when $p\in (1,\infty)$. Observe that a similar result does not hold when $p=\infty$.

\begin{proposition}\label{deltadentro}
Let $p\in (1,\infty)$ and $\alpha\geq 0$. Then the operator $\mathcal{U}_p \colon f \mapsto \chi^{1/p} f$ is a Banach space isomorphism between $L^p_\alpha(\mu_\chi)$ and $L^p_\alpha(\rho)$. In particular
\[
\| f\|_{L^p_\alpha(\mu_\chi)} \approx \| \chi^{1/p} f\|_{L^p_\alpha(\rho)}.
\]
\end{proposition}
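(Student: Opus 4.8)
The plan is to prove the norm equivalence first at integer regularity, where Proposition~\ref{prop:kintero} reduces everything to $L^p$-estimates on the fields $X_J$, and then to reach arbitrary $\alpha\ge 0$ by complex interpolation. The starting observation is that $\mathcal{U}_p$ is already an \emph{isometric} isomorphism at level $\alpha=0$: since $\dd\mu_\chi=\chi\,\dd\rho$, one has $\int_G|\chi^{1/p}h|^p\,\dd\rho=\int_G|h|^p\,\dd\mu_\chi$, so that $\|\chi^{1/p}h\|_{L^p(\rho)}=\|h\|_{L^p(\mu_\chi)}$ for every measurable $h$. Applied to $h=X_If$, this identity is what converts $\mu_\chi$-norms of derivatives into $\rho$-norms. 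The key algebraic input is the Leibniz-type formula already used in the proof of Lemma~\ref{estimates_ptgamma}: since $\chi^{1/p}$ is again a positive character (with $X_i\chi^{1/p}=\tfrac{c_i}{p}\chi^{1/p}$), for every multi-index $J$ and smooth $f$ one has $X_J(\chi^{1/p}f)=\chi^{1/p}\sum_{I\subseteq J}c(I)\,X_If$, where $I$ ranges over the sub-words of $J$ and the constants $c(I)$ depend only on the $c_i/p$.

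For the integer case $\alpha=k$ I would combine Proposition~\ref{prop:kintero} on $(G,\mu_\chi)$, the Leibniz formula, and the $\alpha=0$ isometry to obtain
\[
\sum_{J\in\mathfrak{l}^m,\,m\le k}\|X_J(\chi^{1/p}f)\|_{L^p(\rho)}
\lesssim \sum_{I\in\mathfrak{l}^m,\,m\le k}\|X_If\|_{L^p(\mu_\chi)}
\approx \|f\|_{L^p_k(\mu_\chi)},
\]
and hence, applying Proposition~\ref{prop:kintero} now on $(G,\rho)$, $\|\chi^{1/p}f\|_{L^p_k(\rho)}\lesssim\|f\|_{L^p_k(\mu_\chi)}$. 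The reverse inequality is symmetric: applying the same expansion to the character $\chi^{-1/p}$ and writing $X_Jf=X_J\big(\chi^{-1/p}\cdot\chi^{1/p}f\big)$ yields $\|f\|_{L^p_k(\mu_\chi)}\lesssim\|\chi^{1/p}f\|_{L^p_k(\rho)}$. Thus $\mathcal{U}_p$ is a Banach space isomorphism $L^p_k(\mu_\chi)\to L^p_k(\rho)$ for every integer $k$.

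For general $\alpha\ge 0$ I would interpolate. Writing $k=\lfloor\alpha\rfloor$ and $\theta=\alpha-k\in[0,1)$, Lemma~\ref{lemma_interpolation}(i) identifies $L^p_\alpha(\mu_\chi)=[L^p_k(\mu_\chi),L^p_{k+1}(\mu_\chi)]_\theta$ and likewise $L^p_\alpha(\rho)=[L^p_k(\rho),L^p_{k+1}(\rho)]_\theta$ (the case $\theta=0$ being the integer case just settled). Since multiplication by $\chi^{1/p}$ is a morphism of the compatible couples that is bounded with bounded inverse at both integer endpoints, functoriality of the complex method (Bergh--L\"ofstr\"om~\cite{BerghLofstrom}) transfers these bounds to the interpolated spaces, yielding $\|f\|_{L^p_\alpha(\mu_\chi)}\approx\|\chi^{1/p}f\|_{L^p_\alpha(\rho)}$ and the asserted isomorphism.

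The one point to be careful about---and the reason for routing through Proposition~\ref{prop:kintero} rather than the spectral definition---is that $\mathcal{U}_p$ does \emph{not} intertwine $\Delta_\chi$ with $\Delta$ when $p\neq 2$: a direct computation using $X_i\chi^{1/p}=\tfrac{c_i}{p}\chi^{1/p}$ gives $\mathcal{U}_p\Delta_\chi\mathcal{U}_p^{-1}=\Delta+\tfrac{2-p}{p}\,X+\tfrac{p-1}{p^2}\sum_{i}c_i^2$, which carries a genuine first-order drift unless $p=2$ or $\chi=1$. Hence the fractional powers defining the two Sobolev scales are not similar via $\mathcal{U}_p$, and the norm equivalence cannot be read off the definition directly; the integer characterization together with interpolation is precisely what circumvents this. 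The remaining verifications---the bookkeeping of the Leibniz coefficients and the compatibility of the couples---are routine and present no real difficulty.
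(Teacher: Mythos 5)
Your proposal is correct and follows essentially the same route as the paper: the integer case is handled by combining the characterization of Proposition~\ref{prop:kintero} (applied both for $\mu_\chi$ and for $\rho$, i.e.\ $\chi=1$) with the Leibniz-type identity $X_J(\chi^{1/p}f)=\chi^{1/p}\sum_{I\subseteq J}c(I)X_If$ coming from~\eqref{Xdelta}, and the general case then follows from Lemma~\ref{lemma_interpolation}(i) and the functoriality of the complex interpolation method. Your closing remark that $\mathcal{U}_p$ fails to intertwine $\Delta_\chi$ with $\Delta$ for $p\neq 2$ is a correct and worthwhile observation explaining why this detour is necessary, but it is not needed for the argument itself.
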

\begin{proof}
By~\eqref{Xdelta}, it is easy to see that for every $k \in\N$ and every $p\in (1,\infty)$ 
\begin{equation*}
\sum_{J\in \mathfrak{l}^m, m \leq k }\|\chi X_J f\|_{L^p(\rho)} \approx \sum_{J\in \mathfrak{l}^m, m \leq k }\|X_J  (\chi f)\|_{L^p(\rho)}.
\end{equation*}
This and Proposition~\ref{prop:kintero} prove the statement when $\alpha$ is an integer. Otherwise, let $\alpha = k+\theta$ where $k\in \N$ and $\theta \in (0,1)$. Then
\begin{align*}
\mathcal{U}_p^{-1} L^p_{\alpha}(\rho)
&=\mathcal{U}_p^{-1} [L^p_k(\rho), L^p_{k+1}(\rho)]_\theta\\ 
&= [\mathcal{U}_p^{-1} L^p_k(\rho), \mathcal{U}_p^{-1} L^p_{k+1}(\rho)]_{\theta}\\
&= [L^p_{k}(\mu_\chi), L^p_{k+1}(\mu_\chi)]_{\theta} \\
&= L^p_{\alpha}(\mu_\chi),
\end{align*}
the first and last equalities by~\eqref{intepolation_alphabeta} and the second equality by definition of interpolation in the complex method. Since all equalities are with equivalence of norms, the statement is proved.
\end{proof}

\section{Sobolev embeddings}\label{Sec_SET}
Assume that an embedding of the form $L^p_k(\mu_\chi)\hookrightarrow L^q(\mu_\chi)$ holds for some $p\neq q$, $p\in (1,\infty)$, $q\in [1,\infty]$ and $k\in \N$.  Then by Proposition~\ref{prop:kintero} it is equivalent to the requirement
\[
\| f\|_{L^q(\mu_\chi)}\lesssim \sum_{ J\in \mathfrak{l}^m, \, m\leq k } \| X_J f\|_{L^p(\mu_\chi)}.
\]
Assume this holds true, and for $y\in G$ consider the function $L_y f(x) =f(y^{-1}x)$. Then 
\begin{align*}
(\chi\delta^{-1})^{1/q}(y) \|f\|_{L^q(\mu_\chi)}&= \|L_y f\|_{L^q(\mu_\chi)} \\
& \lesssim \sum_{ J\in \mathfrak{l}^m, \, m\leq k } \| X_J(L_y f)\|_{L^p(\mu_\chi)} \\
& = \sum_{ J\in \mathfrak{l}^m, \, m\leq k } \| L_y (X_J f)\|_{L^p(\mu_\chi)} =(\chi\delta^{-1})^{1/p}(y) \sum_{ J\in \mathfrak{l}^m, \, m\leq k } \| X_J f\|_{L^p(\mu_\chi)}
\end{align*}
by the left invariance of the vector fields. This implies
\[
(\chi\delta^{-1})^{1/q-1/p}(y) \|f\|_{L^q(\mu_\chi)}\lesssim \sum_{ J\in \mathfrak{l}^m, \, m\leq k } \| X_J f\|_{L^p(\mu_\chi)}
\]
for every $y\in G$. Since $\chi\delta^{-1}$ is a character of $G$, and characters grow exponentially at infinity (see~\cite[Proposition 5.6, (ii)]{HMM}), then necessarily $\chi=\delta$. In other words, an embedding of the form $L^p_k(\mu_\chi) \hookrightarrow L^q(\mu_\chi)$ with $p\neq q$ may hold only if the measure is the left Haar measure $\lambda$. It is a little more tricky to show that $L^p_k(\mu_{\chi})$ does not even embed, in general, in $\mathfrak{bmo}(\mu_\chi)$ for any $k\in \N$ if $\mu_\chi \neq \lambda$ (see Section~\ref{Sec_ax+b} below). In the following subsection we prove instead that certain Sobolev embeddings for the left Haar measure hold true, as stated in Theorem~\ref{SETright} in the case when $\chi =\delta$.

\smallskip

We shall first prove Theorem~\ref{SETright} in the case when $\chi=\delta$ (see Theorem~\ref{SET} below). From now on, we let $G_{\alpha,\chi}^c$ be the convolution kernel of the operator $(\Delta_\chi + c I)^{-\alpha/2}$, i.e.\
\[
(\Delta_\chi + c I)^{-\alpha/2} f = f* G_{\alpha,\chi}^c, \qquad G_{\alpha,\chi}^c(x) = C(\alpha) \int_0^\infty t^{\alpha/2-1} e^{-ct} p_t^\chi(x) \, \dd t.
\]
\begin{lemma}\label{estimates_Gsigma}
Let $c>\omega$. Then
\[
G_{\alpha,\chi}^c(x)\lesssim 
\begin{cases}
|x|^{\alpha -d} & \mbox{if $0<\alpha <d$}\\ 
\log(1/|x|) & \mbox{if $\alpha =d$}\\
1  & \mbox{if $\alpha >d$},
\end{cases} \qquad \mbox{when } \quad |x|\leq 1,
\]
while
\[
G_{\alpha,\chi}^c(x)\lesssim \chi^{-1/2}(x) e^{-c' |x|} \qquad \mbox{when } \quad |x|> 1,
\]
where $c'= \frac{1}{2}\sqrt{b(c-\omega)}$ and $b$ and $\omega$ are as in Lemma~\ref{estimates_ptgamma}.
\end{lemma}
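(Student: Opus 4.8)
The plan is to feed the pointwise heat-kernel bound of Lemma~\ref{estimates_ptgamma}(iii), taken with $m=0$ (so $X_J=\mathrm{id}$), into the subordination formula defining $G_{\alpha,\chi}^c$, thereby reducing the whole statement to a single scalar integral. Indeed, inserting $p_t^\chi(x)\lesssim \chi^{-1/2}(x)\,t^{-d/2}e^{\omega t}e^{-b|x|^2/t}$ into $G_{\alpha,\chi}^c(x)=C(\alpha)\int_0^\infty t^{\alpha/2-1}e^{-ct}p_t^\chi(x)\,\dd t$ and writing $c_0:=c-\omega>0$ (positive exactly because $c>\omega$), I get
\[
G_{\alpha,\chi}^c(x)\lesssim \chi^{-1/2}(x)\,I(x),\qquad I(x):=\int_0^\infty t^{\frac{\alpha-d}{2}-1}e^{-c_0 t}e^{-b|x|^2/t}\,\dd t.
\]
No Fubini/interchange is needed, since Lemma~\ref{estimates_ptgamma}(iii) is used purely as a pointwise majorant inside the given formula. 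On $B_1$ the continuity and positivity of $\chi$ (equivalently~\eqref{localdoubmugamma}) give $\chi^{-1/2}(x)\approx 1$, so the prefactor may be dropped for $|x|\le 1$, and everything comes down to estimating $I(x)$ in the two regimes $|x|\le1$ and $|x|>1$.

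For $|x|\le 1$ I would split $I(x)=\int_0^1+\int_1^\infty$. The tail is harmless: $\int_1^\infty t^{\frac{\alpha-d}{2}-1}e^{-c_0 t}e^{-b|x|^2/t}\,\dd t\le \int_1^\infty t^{\frac{\alpha-d}{2}-1}e^{-c_0 t}\,\dd t$ is a finite constant, hence $O(1)$. For the local part I bound $e^{-c_0 t}\le 1$ and change variables $s=b|x|^2/t$, which after collecting powers of $|x|$ gives
\[
\int_0^1 t^{\frac{\alpha-d}{2}-1}e^{-b|x|^2/t}\,\dd t = b^{\frac{\alpha-d}{2}}\,|x|^{\alpha-d}\int_{b|x|^2}^\infty s^{\frac{d-\alpha}{2}-1}e^{-s}\,\dd s.
\]
The trichotomy is then read off from the incomplete-Gamma integral as its lower endpoint tends to $0$: if $\alpha<d$ the integrand is integrable at $0$, the integral converges to $\Gamma(\tfrac{d-\alpha}{2})$, and the local part is $\approx|x|^{\alpha-d}$, which dominates the $O(1)$ tail since $|x|\le1$; if $\alpha=d$ the exponent equals $-1$ and the integral grows like $\log(1/|x|)$; if $\alpha>d$ the exponent is $<-1$, the integral diverges like $|x|^{d-\alpha}$ at the lower end, and the prefactor $|x|^{\alpha-d}$ cancels it to leave $O(1)$. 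This yields precisely the three cases claimed for $|x|\le1$.

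For $|x|>1$ the Gaussian must be converted into genuine exponential decay in $|x|$. The device is the elementary inequality $\theta c_0 t+\theta b|x|^2/t\ge 2\theta\sqrt{c_0 b}\,|x|$ (AM--GM), applied with $\theta=\tfrac14$, which gives $\tfrac14\big(c_0 t+b|x|^2/t\big)\ge \tfrac12\sqrt{c_0 b}\,|x|=c'|x|$ and hence
\[
e^{-c_0 t-b|x|^2/t}\le e^{-c'|x|}\,e^{-\frac34\left(c_0 t+b|x|^2/t\right)}.
\]
Substituting this into $I(x)$ and using $|x|^2>1$ to replace $e^{-\frac34 b|x|^2/t}$ by $e^{-\frac34 b/t}$, the residual integral $\int_0^\infty t^{\frac{\alpha-d}{2}-1}e^{-\frac34 c_0 t}e^{-\frac34 b/t}\,\dd t$ is a finite constant independent of $x$ (the factor $e^{-\frac34 b/t}$ suppresses the singularity at $t=0$ regardless of the sign of $\tfrac{\alpha-d}{2}-1$, while $e^{-\frac34 c_0 t}$ controls $t\to\infty$). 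Multiplying by $\chi^{-1/2}(x)$ then gives $G_{\alpha,\chi}^c(x)\lesssim \chi^{-1/2}(x)e^{-c'|x|}$ with $c'=\tfrac12\sqrt{b(c-\omega)}$, as required.

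The only genuinely delicate point is the small-$t$ bookkeeping for $|x|\le1$: one must track the exact power of $|x|$ produced by the change of variables and verify that in the regime $\alpha>d$ the prefactor $|x|^{\alpha-d}$ \emph{exactly} compensates the lower-endpoint divergence of the incomplete-Gamma integral, and that the borderline $\alpha=d$ produces the logarithm. By contrast, the large-$|x|$ estimate is routine once the splitting is fixed; the value $\theta=\tfrac14$ is dictated precisely by the target rate $c'=\tfrac12\sqrt{bc_0}$, leaving $\tfrac34$ of the exponent available to absorb the polynomial factor $t^{\frac{\alpha-d}{2}-1}$ uniformly in $x$.
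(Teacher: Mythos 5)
Your proof is correct and follows essentially the same route as the paper's: insert the $m=0$ heat-kernel bound of Lemma~\ref{estimates_ptgamma}(iii) into the subordination integral, split at $t=1$ and change variables $u=|x|^2/t$ for the local regime, and use the arithmetic--geometric splitting of $c_0t+b|x|^2/t$ to extract $e^{-c'|x|}$ for $|x|>1$. The only cosmetic differences are that the paper dispatches the case $\alpha>d$ directly via $e^{-b|x|^2/t}\le 1$ rather than through the incomplete-Gamma cancellation, and the exponent-splitting constants differ trivially.
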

\begin{proof}
We begin by applying the estimate~(iii) of Lemma~\ref{estimates_ptgamma} with $m=0$. We obtain 
\[
G_{\alpha,\chi}^c(x)\lesssim \chi^{-1/2}(x) \int_0^\infty t^{(\alpha-d)/2 -1}e^{-a t } e^{-b|x|^2/t} \, \dd t, 
\]
where $a =c-\omega>0$. 

If $|x|>1$, then 
\[
-at -b \frac{|x|^2}{t} \leq -\frac{at}{2} -\frac{b}{2t} - \frac{\sqrt{ab}}{2}|x|
\]
so that
\[
\int_0^\infty t^{(\alpha-d)/2 -1}e^{-a t } e^{-b|x|^2/t} \, \dd t \leq e^{-\frac{1}{2}\sqrt{ab}|x|} \int_0^\infty t^{(\alpha-d)/2 -1} e^{-\frac{at}{2}-\frac{b}{2t}}\, \dd t \lesssim e^{-\frac{1}{2}\sqrt{ab}|x|} .
\]
This yields the desired estimate with $c' = \frac{1}{2}\sqrt{ab}$.

Let $|x|\leq 1$ and observe that $\chi^{-1/2}(x)\approx 1$. We split the integral into the integrals over $(0,1)$ and $(1,\infty)$, which yields 
\[
G_{\alpha,\chi}^c(x)\lesssim \int_0^1 t^{(\alpha-d)/2 -1} e^{-b|x|^2/t} \,
\dd t  +  \int_1^\infty  e^{-a t }e^{-b|x|^2/t} \, \dd t \eqqcolon
G_1(x) + G_2(x), 
\]
where $G_2(x)\lesssim 1$. If $\alpha >d$, then also $G_1(x)\lesssim 1$ since $e^{-b|x|^2/t} \leq 1$. Otherwise, we make the change of variables $|x|^2/t =u$ and obtain 
\[
G_1(x) = |x|^{\alpha -d} \int_{|x|^2}^\infty u^{(d-\alpha)/2} e^{-bu} \, \frac{\dd u}{u}
\]
from which the first estimate follows.
\end{proof}

\begin{corollary}\label{cor:deltaG}
Let $a,s\in \R$, $r>1$. If $c>0$ is sufficiently large, then
\[
\|\delta^a \chi^{s} G_{\alpha,\chi}^c\|_{L^r(\rho)}<\infty \qquad \forall\, \alpha>d \frac{r-1}{r}.
\]
\end{corollary}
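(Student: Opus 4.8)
The plan is to bound the $L^r(\rho)$ norm directly from the pointwise estimates of Lemma~\ref{estimates_Gsigma}, after splitting the region of integration into the local part $B_1=\{|x|\le 1\}$ and the global part $\{|x|>1\}$. Throughout one assumes $c>\omega$ so that Lemma~\ref{estimates_Gsigma} applies; the extra largeness of $c$ will be used only for the global part. Writing
\[
\|\delta^a\chi^s G_{\alpha,\chi}^c\|_{L^r(\rho)}^r=\int_{B_1}\big|\delta^a\chi^s G_{\alpha,\chi}^c\big|^r\,\dd\rho+\int_{\{|x|>1\}}\big|\delta^a\chi^s G_{\alpha,\chi}^c\big|^r\,\dd\rho,
\]
it suffices to show that each summand is finite.

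For the local part, note that both $\delta$ and $\chi$ are comparable to the constant $1$ on $B_1$: applying~\eqref{localdoubmugamma} with $R=1$ and $x=e$ (where every character takes the value $1$) gives $c^{-1}\le\chi(y),\delta(y)\le c$ for $|y|\le 1$, hence $\delta^a\chi^s\approx 1$ on $B_1$. Thus it is enough to bound $\int_{B_1}|G_{\alpha,\chi}^c|^r\,\dd\rho$. When $\alpha>d$ the kernel is bounded and when $\alpha=d$ it is dominated by $\log(1/|x|)$; in both cases $r$-integrability on $B_1$ follows at once from~\eqref{pallepiccole}. The decisive case is $0<\alpha<d$, where Lemma~\ref{estimates_Gsigma} yields $G_{\alpha,\chi}^c(x)\lesssim|x|^{\alpha-d}$. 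Decomposing $B_1$ into the dyadic annuli $\{2^{-k-1}<|x|\le 2^{-k}\}$, whose $\rho$-measure is $\lesssim 2^{-kd}$ by~\eqref{pallepiccole}, the integral is controlled by the geometric series
\[
\sum_{k\ge 0} 2^{k(d-\alpha)r}\,2^{-kd},
\]
which converges exactly when $(d-\alpha)r<d$, that is, when $\alpha>d\frac{r-1}{r}$. This is precisely the hypothesis, so the local part is finite.

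For the global part, Lemma~\ref{estimates_Gsigma} gives $G_{\alpha,\chi}^c(x)\lesssim\chi^{-1/2}(x)e^{-c'|x|}$ with $c'=\tfrac12\sqrt{b(c-\omega)}$, so the integrand is dominated by $\eta(x)\,e^{-c'r|x|}$, where $\eta\coloneqq\delta^{ar}\chi^{(s-1/2)r}$ is again a positive character of $G$. Since positive characters grow at most exponentially (see~\cite[Proposition 5.6, (ii)]{HMM}), one has $\eta(x)\lesssim e^{C_\eta|x|}$ for some $C_\eta>0$, while~\eqref{pallegrandi} bounds the volume of $\{|x|\le R\}$ by $\lesssim e^{DR}$. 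Summing over the annuli $\{j<|x|\le j+1\}$, on each of which $\eta\lesssim e^{C_\eta(j+1)}$, $e^{-c'r|x|}\le e^{-c'rj}$ and the measure is $\lesssim e^{D(j+1)}$, the global integral is bounded by
\[
\sum_{j\ge 1} e^{(C_\eta+D)(j+1)}\,e^{-c'rj},
\]
a geometric series that converges as soon as $c'r>C_\eta+D$. As $c'\approx\sqrt{c}$, this is achieved by taking $c$ large enough (depending on $a,s,r,\chi$), which is exactly the role of the hypothesis that $c$ be sufficiently large.

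The only genuinely delicate point is the local estimate, where one must track the precise power $|x|^{\alpha-d}$ of the singularity and pair it with the local dimension~\eqref{pallepiccole} so that the resulting geometric series converges under the sharp threshold $\alpha>d\frac{r-1}{r}$; the remaining contributions are routine, being dominated by the exponential decay $e^{-c'|x|}$ against the at-most-exponential growth of the character $\eta$ and of the volume, with the largeness of $c$ forcing convergence.
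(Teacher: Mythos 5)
Your proof is correct and follows essentially the same route as the paper's: the same splitting into $B_1$ and $\{|x|>1\}$, the same use of Lemma~\ref{estimates_Gsigma} with the local dimension~\eqref{pallepiccole} to handle the singularity $|x|^{\alpha-d}$ under the threshold $\alpha>d\frac{r-1}{r}$, and the same exponential-decay-versus-exponential-growth comparison (via the at-most-exponential growth of characters and~\eqref{pallegrandi}) for the global part, with largeness of $c$ forcing convergence. The only cosmetic difference is that you sum over unit-width annuli at infinity where the paper uses dyadic ones; both work identically.
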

\begin{proof}
In view of Lemma~\ref{estimates_Gsigma} we split
\begin{equation}\label{GsigmaLrnorm}
\| \delta^a\chi^s G_{\alpha,\chi}^c \|_{L^r(\rho)}^r = \int_{B_1} |\delta^a \chi^{s}  G_{\alpha,\chi}^c|^r \, \dd \rho +  \int_{B_1^c} |\delta^a \chi^{s}  G_{\alpha,\chi}^c|^r \, \dd \rho. 
\end{equation}
The integral at infinity can be treated by Lemma~\ref{estimates_Gsigma} together with Lemma~\ref{estimates_ptgamma}~(ii). These yield 
\begin{align*}
 \int_{B_1^c} |\delta^a \chi^{s} G_{\alpha,\chi}^c|^r \, \dd \rho 
 & \leq \int_{B_1^c} \delta^{ra}(x) \chi^{rs}(x)  e^{-rc'|x|} \, \dd \rho(x) \\
 & = \sum_{k=0}^\infty  e^{-rc'2^k} \int_{A^1_k} \delta^{ra}(x) \chi^{rs}(x)  \, \dd \rho(x)  \lesssim \sum_{k=0}^\infty  e^{-rc'2^k} e^{C2^k}<\infty 
\end{align*}
if $c$ (hence $c'$) is large enough. 

To treat the local integral, we first observe that both $\chi$ and $\delta$ are bounded on $B_1$ and play no role. If $\alpha <d$ then 
\begin{align*}
\int_{B_1} | G_{\alpha,\chi}^c|^r \, \dd \rho 
&\lesssim \int_{B_1} |x|^{r(\alpha-d)} \, \dd \rho \lesssim \int_0^1 u^{r(\alpha-d)} u^{d-1}\, \dd u <\infty 
\end{align*}
whenever
\[
\alpha > d\frac{r-1 }{r}.
\]
The case $\alpha \geq d$ can be treated again by means of Lemma~\ref{estimates_Gsigma}, but it is even easier and omitted. 
\end{proof}
\begin{lemma}[Young's inequalities]\label{Young}
For $1<p\leq q<\infty$, choose $r\geq 1$ such that $\frac{1}{p}+ \frac{1}{r}=1+\frac{1}{q}$. Then
\begin{equation}\label{Youngfinite}
\|f*g\|_{L^q(\lambda)} \leq \|f\|_{L^p(\lambda)}\left( \| \check{g}\|_{L^r(\lambda)}^{r/p'} \|g\|_{L^r(\lambda)}^{r/q}\right)
\end{equation}
where $\check{g}(x)= g(x^{-1})$. If $p>1$ and $r\geq 1$ is such that $\frac{1}{p}+ \frac{1}{r}=1$, then
\begin{equation}\label{Younginfinite}
\|f*g\|_\infty \leq \|f\|_{L^p(\lambda)} \|\check{g}\|_{L^{p'}(\lambda)}.
\end{equation}
\end{lemma}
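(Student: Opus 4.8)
The plan is to prove both inequalities by a direct computation, reducing everything to Hölder's inequality and Tonelli's theorem, with the nonunimodularity handled by a small set of substitution identities relating $\rho$, $\lambda$ and $\delta$. The identities I will use repeatedly, all immediate consequences of $\dd\lambda=\delta\,\dd\rho$, of the left-invariance of $\lambda$, and of the inversion relation $\int_G \psi(y)\,\dd\rho(y)=\int_G \psi(y^{-1})\,\dd\lambda(y)$, are the following: for every $s>0$ and $x\in G$,
\[
\int_G |f(xy^{-1})|^s\,\dd\rho(y)=\|f\|_{L^s(\lambda)}^s, \qquad \int_G |g(y)|^s\,\dd\rho(y)=\|\check g\|_{L^s(\lambda)}^s,
\]
and, using the right-translation rule $\int_G h(xa)\,\dd\lambda(x)=\delta(a)^{-1}\int_G h\,\dd\lambda$ with $a=y^{-1}$,
\[
\int_G |f(xy^{-1})|^s\,\dd\lambda(x)=\delta(y)\,\|f\|_{L^s(\lambda)}^s .
\]
The endpoint \eqref{Younginfinite} is then immediate: applying Hölder in $y$ with exponents $(p,p')$ to $f*g(x)=\int_G f(xy^{-1})g(y)\,\dd\rho(y)$ and using the first two identities with $s=p$ and $s=p'$ gives $|f*g(x)|\le \|f\|_{L^p(\lambda)}\|\check g\|_{L^{p'}(\lambda)}$.

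For \eqref{Youngfinite} the key is to choose the correct \emph{asymmetric} three-factor splitting of the integrand, the one that makes $\check g$ appear. From $\frac1p+\frac1r=1+\frac1q$ one checks the exponent relations $\frac1q+\frac1{p'}+\frac1{r'}=1$, together with $1-\tfrac pq=\tfrac p{r'}$ and $\tfrac1q+\tfrac1{p'}=\tfrac1r$. I would then write, for fixed $x$,
\[
|f(xy^{-1})|\,|g(y)| = \Big(|f(xy^{-1})|^{p}|g(y)|^{r}\Big)^{1/q}\cdot |g(y)|^{r/p'}\cdot |f(xy^{-1})|^{p/r'},
\]
and apply Hölder in $y$ (with respect to $\dd\rho$) with the three exponents $(q,p',r')$. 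The two trailing factors are independent of $x$: by the identities above, $\big(\int_G |g(y)|^{r}\dd\rho(y)\big)^{1/p'}=\|\check g\|_{L^r(\lambda)}^{r/p'}$ and $\big(\int_G |f(xy^{-1})|^{p}\dd\rho(y)\big)^{1/r'}=\|f\|_{L^p(\lambda)}^{p/r'}$. Setting $H(x)\coloneqq\int_G |f(xy^{-1})|^{p}|g(y)|^{r}\,\dd\rho(y)$, this yields $|f*g(x)|\le \|\check g\|_{L^r(\lambda)}^{r/p'}\,\|f\|_{L^p(\lambda)}^{p/r'}\,H(x)^{1/q}$.

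It remains to integrate the $q$-th power in $x$ against $\lambda$, which by Tonelli and the third identity (with $s=p$) gives
\[
\int_G H(x)\,\dd\lambda(x)=\int_G |g(y)|^{r}\Big(\int_G |f(xy^{-1})|^{p}\dd\lambda(x)\Big)\dd\rho(y)=\|f\|_{L^p(\lambda)}^{p}\int_G |g|^{r}\delta\,\dd\rho=\|f\|_{L^p(\lambda)}^{p}\,\|g\|_{L^r(\lambda)}^{r}.
\]
Substituting back, $\|f*g\|_{L^q(\lambda)}^q\le \|\check g\|_{L^r(\lambda)}^{qr/p'}\,\|f\|_{L^p(\lambda)}^{qp/r'+p}\,\|g\|_{L^r(\lambda)}^{r}$, and the bookkeeping $\tfrac{qp}{r'}+p=pq\big(\tfrac1{r'}+\tfrac1q\big)=q$ collapses the $f$-exponent to $q$; taking $q$-th roots produces exactly the right-hand side of \eqref{Youngfinite}.

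The main obstacle is purely the modular-function bookkeeping: one must verify the substitution identities with the correct powers of $\delta$ (the convolution is defined through $\rho$ while the norms are taken against $\lambda$), and, more subtly, one must pick the splitting that routes one copy of $g$ through the inversion $y\mapsto y^{-1}$ so that $\check g$ rather than $g$ emerges in the $p'$-factor. I would also note the degenerate endpoint $r=1$ (forcing $p=q$ and $r'=\infty$), where the three-factor Hölder collapses to the two-factor one; this case is handled directly, exactly as in the proof of \eqref{Younginfinite}, or by Minkowski's integral inequality.
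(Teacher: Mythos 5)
Your proof is correct: the three substitution identities are the right ones for the paper's conventions ($\dd\lambda=\delta\,\dd\rho$, convolution taken against $\rho$), the exponent bookkeeping $\frac1q+\frac1{p'}+\frac1{r'}=1$ checks out, and the three-factor H\"older splitting followed by Tonelli and the right-translation rule $\int h(xy^{-1})\,\dd\lambda(x)=\delta(y)\int h\,\dd\lambda$ delivers exactly the asymmetric right-hand side of \eqref{Youngfinite}, with the $\check g$ factor emerging from integrating $|g|^r$ against $\rho$ rather than $\lambda$. The comparison with the paper is somewhat moot here: the paper does not prove the lemma at all, but simply cites Hewitt--Ross, (20.16) and (20.18), so your argument is a self-contained reconstruction of what that reference does (the classical interpolation proof of Young's inequality adapted to a nonunimodular group). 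What your write-up buys is transparency about precisely where the modular function enters --- namely only in the final Tonelli step, converting $\int|g|^r\delta\,\dd\rho$ into $\|g\|_{L^r(\lambda)}^r$ --- and an explicit treatment of the degenerate endpoint $r=1$, $r'=\infty$, which the citation leaves implicit.
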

\begin{proof}
The inequality~\eqref{Youngfinite} can be found in the proof of~\cite[(20.18)]{HewittRoss}, while~\eqref{Younginfinite} is~\cite[(20.16)]{HewittRoss}. 
\end{proof}

We are now ready to prove the Sobolev embedding theorem for $L^p_\alpha(\lambda)$.
\begin{theorem}\label{SET}
Let $1< p,q <\infty$.
\begin{itemize}
\item[(a)] If $\alpha>0$, then $L^p_\alpha (\lambda) \hookrightarrow L^q(\lambda)$ for every $q\geq p$ such that $\frac1p-\frac1q \leq \frac\alpha d$; 
\item[(b)] if $\alpha\geq d/p$, then $L^p_\alpha (\lambda) \hookrightarrow L^q(\lambda)$ for every $q\geq  p$;
\item[(c)] if $\alpha> d/p$, then $L^p_\alpha(\lambda) \hookrightarrow L^{\infty}$.
\end{itemize}
\end{theorem}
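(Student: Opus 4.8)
The plan is to prove the embeddings in their equivalent convolution form. Throughout $\chi=\delta$, so $\mu_\delta=\lambda$ and the relevant operator is $\Delta_\delta=\mathcal{L}$. Fix $c$ large enough for~\eqref{equivNorms}, Lemma~\ref{estimates_Gsigma} and Corollary~\ref{cor:deltaG} to apply, and write $G=G^c_{\alpha,\delta}$ for the convolution kernel of $(\Delta_\delta+cI)^{-\alpha/2}$. By~\eqref{equivNorms}, setting $u=(\Delta_\delta+cI)^{\alpha/2}f$ gives $f=u*G$ with $\|u\|_{L^p(\lambda)}\approx\|f\|_{L^p_\alpha(\lambda)}$, so each assertion reduces to a convolution bound $\|u*G\|_{L^q(\lambda)}\lesssim\|u\|_{L^p(\lambda)}$ (respectively $\|u*G\|_\infty\lesssim\|u\|_{L^p(\lambda)}$ for~(c)).

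I would first dispose of the subcritical range, which comprises~(c),~(b) and~(a) under the \emph{strict} inequality $\tfrac1p-\tfrac1q<\tfrac\alpha d$. These follow from Young's inequalities (Lemma~\ref{Young}) and Corollary~\ref{cor:deltaG}. For~(a) strict and~(b) I apply~\eqref{Youngfinite} with $r$ given by $\tfrac1p+\tfrac1r=1+\tfrac1q$, so that $\tfrac{r-1}{r}=\tfrac1p-\tfrac1q$; the right-hand side of~\eqref{Youngfinite} is finite as soon as $G,\check G\in L^r(\lambda)$. Since $\|G\|_{L^r(\lambda)}^r=\|\delta^{1/r}G\|^r_{L^r(\rho)}$, Corollary~\ref{cor:deltaG} gives finiteness exactly when $\alpha>d\tfrac{r-1}{r}=d(\tfrac1p-\tfrac1q)$, which is the strict hypothesis; the reflected kernel $\check G$ is treated identically after the inversion $x\mapsto x^{-1}$, which only replaces $\delta^{1/r}$ by another power of $\delta$, and Corollary~\ref{cor:deltaG} bounds all such weighted norms in the same range of $\alpha$ (its admissible range is dictated solely by the local singularity of $G$, the weights being locally bounded and globally beaten by the exponential decay). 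All finite $q\ge p$ in~(b) fall here because $\tfrac1p-\tfrac1q<\tfrac1p\le\tfrac\alpha d$ is automatically strict. For~(c) I use~\eqref{Younginfinite} with $r=p'$: the bound $\|u*G\|_\infty\le\|u\|_{L^p(\lambda)}\|\check G\|_{L^{p'}(\lambda)}$ needs $\check G\in L^{p'}(\lambda)$, which Corollary~\ref{cor:deltaG} grants precisely when $\alpha>d\tfrac{p'-1}{p'}=d/p$.

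The crux, and the only genuinely new ingredient, is the critical endpoint of~(a): $\tfrac1p-\tfrac1q=\tfrac\alpha d$, which forces $0<\alpha<d/p$ and thus $\alpha<d$. Here Young breaks down, since the corresponding $r$ satisfies $\alpha=d\tfrac{r-1}{r}$, the exact borderline at which the local integral in Corollary~\ref{cor:deltaG} diverges logarithmically and $G\notin L^r(\lambda)$. I would split $G=G\mathbf{1}_{B_1}+G\mathbf{1}_{B_1^c}=:G_0+G_\infty$. The global piece $G_\infty$ decays exponentially by Lemma~\ref{estimates_Gsigma}, so $G_\infty,\check G_\infty\in L^s(\lambda)$ for every $s\ge 1$ and convolution by $G_\infty$ maps $L^p(\lambda)\to L^q(\lambda)$ for all $q\ge p$ via~\eqref{Youngfinite}. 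The local piece obeys $G_0(x)\lesssim|x|^{\alpha-d}\mathbf{1}_{B_1}(x)$ (Lemma~\ref{estimates_Gsigma}, case $0<\alpha<d$), so that $|u*G_0|$ is pointwise dominated by a local fractional integral of order $\alpha$ on $(G,d_C,\lambda)$; by the local comparability of $\delta$~\eqref{localdoubmugamma} and the local dimension $d$ from~\eqref{pallepiccole} (so $\lambda(B(x,s))\approx s^d$ at small scales), its kernel is, up to locally bounded factors, $\approx d_C(x,y)^{\alpha-d}\approx d_C(x,y)^{\alpha}/\lambda(B(x,d_C(x,y)))$.

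To bound this local fractional integral from $L^p(\lambda)$ into $L^q(\lambda)$ with $\tfrac1q=\tfrac1p-\tfrac\alpha d$, I would invoke the Hardy--Littlewood--Sobolev inequality on the locally doubling space of homogeneous type $(G,d_C,\lambda)$, obtained in turn from Hedberg's pointwise estimate $|u*G_0(x)|\lesssim(\mathcal{M}u(x))^{1-\alpha p/d}\,\|u\|_{L^p(\lambda)}^{\alpha p/d}$ together with the boundedness on $L^p(\lambda)$, $p>1$, of the local Hardy--Littlewood maximal operator $\mathcal{M}$ (itself a consequence of local doubling). Summing the two pieces then yields the endpoint estimate, completing~(a). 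This weak-type/HLS step is exactly what replaces Young at the scale-invariant exponent, and is the main obstacle; the subcritical cases and the global part are pure bookkeeping built on the already-established kernel bounds of Lemmas~\ref{estimates_ptgamma}--\ref{estimates_Gsigma} and Corollary~\ref{cor:deltaG}.
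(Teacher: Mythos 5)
Your treatment of parts (b) and (c), and of the subcritical range of (a), coincides with the paper's: Young's inequalities (Lemma~\ref{Young}) combined with the weighted kernel bounds of Corollary~\ref{cor:deltaG}, after reducing to the convolution form $f=u*G^c_{\alpha,\delta}$ via~\eqref{equivNorms}. Where you genuinely diverge is at the critical exponent $\frac1p-\frac1q=\frac\alpha d$ (which, as you correctly note, only occurs for $0<\alpha<d/p$). The paper does \emph{not} attack this line directly: it quotes Varopoulos's first-order embedding $L^p_1(\lambda)\hookrightarrow L^q(\lambda)$ as the base case, obtains $\alpha\in(0,1)$ by complex interpolation between $L^p(\lambda)$ and $L^p_1(\lambda)$ (Lemma~\ref{lemma_interpolation}), and then climbs to general $\alpha$ by induction using the recursive characterization of Proposition~\ref{alpha+1}; the subcritical cases are then recovered by interpolating with $L^p_\alpha(\lambda)\hookrightarrow L^p(\lambda)$. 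You instead split $G^c_{\alpha,\delta}$ into its restrictions to $B_1$ and $B_1^c$, dispose of the global piece by exponential decay, and treat the local piece as a local fractional integral of order $\alpha$ on $(G,d_C,\lambda)$, proving the endpoint bound via Hedberg's pointwise estimate and the $L^p(\lambda)$-boundedness of the local maximal operator (which the paper records in Section~\ref{Sec_algebra}, via~\eqref{MRdeltafuori} and Proposition~\ref{deltadentro}). Your route buys a self-contained, uniform treatment of the whole critical line that bypasses both the citation to Varopoulos and the interpolation/induction bootstrap, at the cost of having to write out a (standard but not entirely trivial) local Hardy--Littlewood--Sobolev argument on a merely locally doubling space --- including the change of variables $u*G_0(x)=\int u(z)G_0(z^{-1}x)\,\dd\lambda(z)$ and the verification that $\lambda(B(x,s))\approx s^d$ for $s\le1$ uniformly in $x$, both of which do hold here. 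The paper's route is shorter on the page but rests on an external theorem and on the machinery of Sections~\ref{Sec_Sobolev}; either is a valid proof.
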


\begin{proof}

We begin with (a), and observe that it is enough to prove it when $1<p\leq q <\infty$, $\alpha>0$ and $\frac1p-\frac1q =\frac\alpha d$, because the intermediate terms follow from this by interpolation. Indeed, assume that
\begin{equation}\label{SETaEqual}
L^p_\alpha(\lambda) \hookrightarrow L^q(\lambda), \qquad \frac1p-\frac1q =\frac\alpha d.
\end{equation}
Since $L^p_\alpha(\lambda) \hookrightarrow L^p(\lambda)$ by Lemma~\ref{lemma_interpolation}, we obtain that for every $\theta\in (0,1)$
\[
L^p_\alpha(\lambda)= [L^p_\alpha(\lambda), L^p_\alpha(\lambda)]_\theta \hookrightarrow [L^p(\lambda), L^q(\lambda)]_\theta = L^{q_\theta} (\lambda), \qquad \frac{1}{q_\theta}= \frac{1-\theta}{p} + \frac{\theta}{q},
\]
where
\[
q_\theta \geq p, \quad \frac{1}{p} - \frac{1}{q_\theta} = \theta \frac{\alpha}{d}<\frac{\alpha}{d}.
\]
Assume now $p\geq q$ is such that $\frac1p-\frac1q = \frac 1 d$. By \cite[\textsection 6, Theorem 1]{Varopoulos1}
\begin{equation}\label{embed:Var}
\|  f\|_{L^q(\lambda)} \lesssim \| f\|_{L^p_1(\lambda)},
\end{equation}
which is~\eqref{SETaEqual} when $\alpha =1$.

Let now $\alpha\in (0,1)$. By~\eqref{embed:Var} and Lemma~\ref{lemma_interpolation}, if $q\geq p$ and $\frac{1}{p}-\frac{1}{q}=\frac{1}{d}$, then
\[
 L^p_\alpha(\lambda) = \left[ L^p(\lambda), L^p_1(\lambda) \right]_\alpha \hookrightarrow \left[ L^p(\lambda), L^q(\lambda)\right]_\alpha = L^r (\lambda) , \qquad \frac 1r=\frac{1-\alpha}{p}+\frac{\alpha}{q},
\]
which is the statement for $\alpha \in (0,1)$, since $r\geq p$ and $\frac{1}{p} - \frac{1}{r} =  \frac{\alpha}{d}$. Thus, the statement is proved for every $\alpha \in (0,1]$.

We finally prove that if the statement holds for some $\alpha$, then it holds also for $\alpha +1$. Let $q\geq p$ be such that
\[
\frac1p-\frac1q\le \frac{\alpha+1}{d},
\]
and take $r$ such that $p<r<q$ and
that
\[
\frac1p-\frac1r\le \frac{\alpha}{d}\quad\text{and}\quad 
\frac1r-\frac1q\le \frac1d \,.
\]
This choice implies, by inductive assumption, that $L^p_\alpha(\lambda) \hookrightarrow L^r(\lambda)$ and $L^r_1(\lambda) \hookrightarrow L^q(\lambda)$. Then
\[
\begin{aligned}
\| f\|_{L^q (\lambda) }
&\lesssim \| f\|_{L^r_1(\lambda)}  \\
& \lesssim \| f\|_{L^r(\lambda)} + \sum_{i=1}^\ell \| X_i f\|_{L^r(\lambda)} \\
& \lesssim \|f\|_{L^p_\alpha(\lambda)} + \sum_{i=1}^\ell \| X_i f\|_{L^p_\alpha(\lambda)} \lesssim \| f\|_{L^p_{\alpha+1}(\lambda)},
\end{aligned}
\]
the second and fourth inequalities by Proposition~\ref{alpha+1}. The proof of (a) is complete.

\smallskip

To prove (b), we shall prove that if $1< p\leq q <\infty$ and $\alpha \geq d/p$, then $(\mathcal{L}+ cI )^{-\alpha/2} \colon L^p(\lambda) \to
L^q(\lambda)$ if $c>0$ is large enough.

To do this, choose $r\geq 1$ such that $\frac{1}{p}+ \frac{1}{r}=1+\frac{1}{q}$. Then, applying~\eqref{Youngfinite} 
\begin{align*}
\| (\mathcal{L} + cI )^{-\alpha/2} f\|_{L^q(\lambda)} & 
= \| f *G_{\alpha,\delta}^c\|_{L^q(\lambda)}\\
&\lesssim \| f\|_{L^p(\lambda) }\|
\check{G}_{\alpha,\delta}^c\|_{L^r(\lambda)}^{1-r/q}
\|G_{\alpha,\delta}^c\|_{L^r(\lambda)}^{r/q}. 
\end{align*}
Since $d\frac{r -1 }{r} = \frac{d}{p} - \frac{1}{q}$, (b) follows by Corollary~\ref{cor:deltaG}. 

\smallskip

We finally prove (c). Let $1< p,q <\infty$ and $\alpha> d/p$. We prove that $(\mathcal{L}+ cI )^{-\alpha/2} \colon L^p(\lambda) \to
L^{\infty}$ if $c>0$ is large enough. By~\eqref{Younginfinite}, 
\[
\| f *G_{\alpha,\delta}^c\|_\infty = \| f*
G_{\alpha,\delta}^c\|_\infty  \lesssim \|f\|_{L^p(\lambda)} \|
\check{G}_{\alpha,\delta}^c\|_{L^{p'}(\lambda)}. 
\]
It then remains to prove that $\check{G}_{\alpha,\delta}^c \in L^{p'}(\lambda)$ when $\alpha> d/p$ and $c>0$ is large enough. But
this follows by Corollary~\ref{cor:deltaG}, since $\frac{p'-1}{p'}=\frac{1}{p}$. 
\end{proof}

We finally prove Theorem~\ref{SETright}. We recall that if $\kappa$ is a positive character of the group, we denote with $\kappa L^{\infty}$ the Banach space of functions $\{ \kappa f\colon   f\in L^\infty \}$ endowed with the norm $\| \kappa f \|_{ \kappa L^\infty } \coloneqq \| f\|_{\infty}$.

\begin{proof}[Proof of Theorem~\ref{SETright}]
The statements (a) and (b) follow immediately from the statements (a) and (b) of Theorem~\ref{SET} by Proposition~\ref{deltadentro}. The proof of statement (c) is an easy modification of the proof of Theorem~\ref{SET}, (c). Indeed, observe that by Lemma~\ref{Young},~\eqref{Younginfinite}, one can deduce the inequality
\begin{align*}
\| (\chi\delta^{-1})^{1/p} (f *G_{\alpha,\chi}^c)\|_\infty 
&= \| [(\chi\delta^{-1})^{1/p}f] *[(\chi\delta^{-1})^{1/p} G_{\alpha,\chi}^c]\|_\infty \\
& \lesssim \|f\|_{L^p(\mu_\chi)} \| (\chi^{-1} \delta)^{1/p} \check{G}_{\alpha,X}^c\|_{L^{p'}(\lambda)}.
\end{align*}
This proves the statement by Corollary~\ref{cor:deltaG}.
\end{proof}
\section{Algebra properties}\label{Sec_algebra}
For every $\alpha \in (0,1)$ and $R>0$ we define the operators
\[
S^{R}_{\alpha}f(x)\coloneqq \left( \int_0^R \left[   \frac{1}{u^{\alpha}V(u)}\int_{|y|<u}|f(xy^{-1})-f(x)|\, \dd \rho(y) \right]^2\frac{\dd u}{u}  \right)^{1/2}
\]
and $S^\loc_\alpha \coloneqq S^1_\alpha$. The following theorem is a key step to prove Theorem~\ref{algebraproduct}.
\begin{theorem}\label{Salphaloc} 
Let $\chi$ be a positive character of $G$. For every $p\in (1,\infty)$ and $\alpha\in (0,1)$,
\[
 \|f\|_{L^{p}_\alpha(\mu_\chi)} \approx   \|S^{{\loc}}_{\alpha}f\|_{L^p(\mu_\chi)}+\|f\|_{L^p(\mu_\chi)}.
\]
 \end{theorem}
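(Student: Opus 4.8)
The plan is to prove the two inequalities separately, reducing each to a Littlewood--Paley statement for the diffusion semigroup $e^{-t\Delta_\chi}$, whose Gaussian and derivative bounds are furnished by Lemma~\ref{estimates_ptgamma}. Throughout I would exploit two structural facts. First, $\Delta_\chi$ commutes with right convolution, so $\Delta_\chi^{\alpha/2}(f*g)=(\Delta_\chi^{\alpha/2}f)*g$ for any fixed kernel $g$. Second, for $u<1$ the character $\chi$ is locally constant in the sense of~\eqref{localdoubmugamma}, so the inner $\rho$-average defining $S^{\loc}_\alpha$ is comparable to the corresponding $\mu_\chi$-average and the functional is genuinely adapted to the locally doubling space $(G,d_C,\mu_\chi)$. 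The restriction $u\in(0,1)$ together with the additive term $\|f\|_{L^p(\mu_\chi)}$ is exactly what absorbs the large-scale part, so in effect only the regime $u\to 0$ must be analysed. It is convenient to introduce the heat square function $G_\alpha f=\big(\int_0^1 |u^{-\alpha}(I-e^{-u^2\Delta_\chi})f|^2\,\tfrac{\dd u}{u}\big)^{1/2}$ as an intermediary.

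For the harder direction $\|f\|_{L^p_\alpha(\mu_\chi)}\lesssim \|S^{\loc}_\alpha f\|_{L^p(\mu_\chi)}+\|f\|_{L^p(\mu_\chi)}$ I would first pass to the \emph{signed} minorant $\widetilde S^{\loc}_\alpha f=\big(\int_0^1|u^{-\alpha}(f*\phi_u-f)|^2\,\tfrac{\dd u}{u}\big)^{1/2}$ with $\phi_u=\mathbf 1_{B_u}/V(u)$; since $|f*\phi_u-f|(x)\le \frac1{V(u)}\int_{B_u}|f(xy^{-1})-f(x)|\,\dd\rho(y)$ pointwise, we have $\widetilde S^{\loc}_\alpha f\le S^{\loc}_\alpha f$, which removes the nonlinearity. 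It then suffices to prove the reverse square-function estimate $\|f\|_{L^p_\alpha(\mu_\chi)}\lesssim \|\widetilde S^{\loc}_\alpha f\|_{L^p(\mu_\chi)}+\|f\|_{L^p(\mu_\chi)}$. For this I would use the subordination formula writing $\Delta_\chi^{\alpha/2}$ (for $0<\alpha<1$) as an integral of $I-e^{-u^2\Delta_\chi}$ against $u^{-\alpha}\tfrac{\dd u}{u}$, together with Stein's Littlewood--Paley theory for the symmetric diffusion semigroup (legitimate by Lemma~\ref{estimates_ptgamma}(i)), to get $\|\Delta_\chi^{\alpha/2}f\|_{L^p(\mu_\chi)}\lesssim\|G_\alpha f\|_{L^p(\mu_\chi)}+\|f\|_{L^p(\mu_\chi)}$; and then a change-of-approximate-identity step comparing the analytic data $I-e^{-u^2\Delta_\chi}$ with the geometric ball averages $I-(\cdot)*\phi_v$, yielding $\|G_\alpha f\|_{L^p(\mu_\chi)}\lesssim\|\widetilde S^{\loc}_\alpha f\|_{L^p(\mu_\chi)}+\|f\|_{L^p(\mu_\chi)}$.

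For the reverse inequality $\|S^{\loc}_\alpha f\|_{L^p(\mu_\chi)}\lesssim\|f\|_{L^p_\alpha(\mu_\chi)}$ I would argue pointwise in $u$, writing $h=e^{-u^2\Delta_\chi}f$ and splitting $f(xy^{-1})-f(x)=(f-h)(xy^{-1})-(f-h)(x)+\big(h(xy^{-1})-h(x)\big)$. Averaging the first two terms over $B_u$ and using the local Hardy--Littlewood maximal operator $M$ bounds them by $M\big((I-e^{-u^2\Delta_\chi})f\big)(x)$, whose $u^{-\alpha}$-square function is controlled through the Fefferman--Stein vector-valued maximal inequality and the upper bound $\|G_\alpha f\|_{L^p(\mu_\chi)}\lesssim\|f\|_{L^p_\alpha(\mu_\chi)}$. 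The third, smooth increment is estimated by a horizontal mean-value inequality $|h(xy^{-1})-h(x)|\lesssim |y|\,M\big((\sum_i|X_ih|^2)^{1/2}\big)(x)\le u\,M\big((\sum_i|X_i e^{-u^2\Delta_\chi}f|^2)^{1/2}\big)(x)$ for $|y|<u$; feeding in the derivative bound of Lemma~\ref{estimates_ptgamma}(iii) with $m=1$ (so $\|X_i e^{-u^2\Delta_\chi}\|\sim u^{-1}$), one recognises $u^{1-\alpha}X_i e^{-u^2\Delta_\chi}\Delta_\chi^{-\alpha/2}$ as a bounded Stein square function acting on $\Delta_\chi^{\alpha/2}f$, which closes the estimate.

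The step I expect to be the main obstacle is the change-of-approximate-identity comparison in the second paragraph: unlike $e^{-u^2\Delta_\chi}$, the averaging operator $(\cdot)*\phi_u$ is not a spectral multiplier of $\Delta_\chi$, so $\|G_\alpha f\|_{L^p(\mu_\chi)}\lesssim\|\widetilde S^{\loc}_\alpha f\|_{L^p(\mu_\chi)}+\|f\|_{L^p(\mu_\chi)}$ cannot be read off the functional calculus and must be built by hand from the Gaussian heat-kernel bounds of Lemma~\ref{estimates_ptgamma}, expressing the heat increment as a superposition of ball-average increments with an $L^1$-summable kernel in the scale variable. A robust alternative, matching the method already used for Theorem~\ref{teoLocalRiesz}, is to establish the equivalence first at $p=2$ by the spectral theorem (where the linear square functions satisfy exact identities and only the signed/unsigned discrepancy needs care) and then transfer to $1<p<\infty$ by treating the square functions as vector-valued singular integrals with $\mathfrak h^1(\mu_\chi)$--$L^1(\mu_\chi)$ and $L^\infty(\mu_\chi)$--$\mathfrak{bmo}(\mu_\chi)$ endpoint bounds, followed by interpolation.
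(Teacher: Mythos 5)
Your route is genuinely different from the paper's. The paper does not reprove the square--function characterization at all: it quotes the unweighted case $\|g\|_{L^p_\alpha(\rho)}\approx\|S^{\loc}_\alpha g\|_{L^p(\rho)}+\|g\|_{L^p(\rho)}$ from \cite[Theorem 1.3 (i)]{PV}, transports it to $\mu_\chi$ through the isomorphism $\mathcal U_p\colon f\mapsto\chi^{1/p}f$ of Proposition~\ref{deltadentro}, and is then left only with the pointwise comparison \eqref{toproof}, namely $\chi^{1/p}|f|+S^{R}_{\alpha}(\chi^{1/p}f)\approx\chi^{1/p}|f|+\chi^{1/p}S^{R}_\alpha f$, which follows from the triangle inequality and $|\chi^{1/p}(y^{-1})-1|\approx|y|\leq u$ for $y\in B_u$. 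That is a half-page argument. What you propose instead is to run the full Littlewood--Paley machinery (subordination, heat square functions, change of approximate identity, gradient square functions) directly in the weighted setting; this amounts to redoing the proof of \cite[Theorem 1.3]{PV} (itself modelled on \cite{CRTN}) for $\Delta_\chi$ and $\mu_\chi$. It is viable in principle --- Lemma~\ref{estimates_ptgamma} supplies exactly the weighted analogues of the kernel bounds those proofs use --- and it buys self-containedness, but at the cost of reproducing a long argument that the paper deliberately outsources and then bypasses with the multiplication isometry.

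Two steps are not right as written and would need repair. First, the pointwise bound $|h(xy^{-1})-h(x)|\lesssim|y|\,M\bigl((\sum_i|X_ih|^2)^{1/2}\bigr)(x)$ for a \emph{single} $y$ is false: the mean value inequality controls the increment by the supremum of the horizontal gradient along a sub-Riemannian geodesic from $x$ to $xy^{-1}$, and a spike of $|\nabla h|$ of small $d$-dimensional measure sitting on that path makes the left-hand side large while leaving the maximal function at $x$ small. What is true, and is all you need since $S^{\loc}_\alpha$ only sees the average over $B_u$, is the local $L^1$-Poincaré inequality $\frac1{V(u)}\int_{B_u}|h(xy^{-1})-h(x)|\,\dd\rho(y)\lesssim u\,M^{R}(|\nabla h|)(x)$ (with $M^R$ as in \eqref{MRbeta}); you should invoke that rather than a pointwise mean value theorem. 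Second, the gradient square function $\bigl(\int_0^1|u\,X_ie^{-u^2\Delta_\chi}F|^2\tfrac{\dd u}{u}\bigr)^{1/2}$ is not an instance of Stein's general theorem for symmetric diffusion semigroups, which covers time derivatives $t^k\partial_t^ke^{-tL}$ only; the global-in-time gradient version is known to fail for large $p$ in some settings, and the local version must be proved by hand via vector-valued Calderón--Zygmund theory from Lemma~\ref{estimates_ptgamma}~(iii), in the spirit of the proof of Theorem~\ref{teoLocalRiesz}. Finally, in the change-of-approximate-identity step and in all large-scale tails, the annuli at scale $2^ku>1$ contribute constants of size $e^{D2^k}$ by \eqref{pallegrandi}, since there is no global doubling on a nonunimodular group; you must check explicitly that the Gaussian factors $e^{-c4^k}$ dominate. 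None of these is fatal, but each requires an argument that your sketch currently elides.
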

 \begin{proof}
By~\cite[Theorem 1.3 (i)]{PV}, Proposition~\ref{deltadentro} and the definition of $S^\loc_\alpha$, it is enough to prove that
\begin{equation}\label{toproof}
\chi^{1/p}|f| + S^R_{\alpha} (\chi^{1/p}f) \approx \chi^{1/p} |f| +  \chi^{1/p} S_\alpha^R f
\end{equation}
when $R=1$. For future convenience and since the proof is the same, we consider $R>0$. 

To prove the inequality $\lesssim$, we take $x\in G$, $u\in (0,R)$ and $y\in B_u$. By the triangle inequality
\begin{align*}
| \chi^{1/p}&(xy^{-1}) f(xy^{-1}) - \chi^{1/p}(x)f(x)| \\
& \leq \chi^{1/p}(x)  \left( \chi^{-1/p}(y)|f(xy^{-1}) -f(x)| + | \chi^{1/p}(y^{-1}) -1||f(x)|\right).
\end{align*}
Since $y\in B_u\subseteq B_R$, $\chi^{-1/p}(y)\approx 1$. Moreover, by the smoothness of $\chi$ and~\eqref{Xdelta}, its derivative at the identity does not vanish and then $| \chi^{1/p}(y^{-1}) -1|\approx |y|\leq u$. Therefore
\begin{align*}
S^R_{\alpha} (\chi^{1/p}f)(x)\lesssim \chi^{1/p}(x) (S_\alpha^R f)(x) + \chi^{1/p}(x) |f(x)|.
\end{align*}
The converse inequality is analogous, since
\begin{align*}
\chi^{1/p}&(x)|f(xy^{-1}) - f(x)| \\
& \lesssim |\chi^{1/p}(xy^{-1}) f(xy^{-1}) - \chi^{1/p}(x)f(x)| + \chi^{1/p}(x) |f(x)| |1 - \chi^{1/p}(y)|.
\end{align*}
This completes the proof.
\end{proof}
We shall also need the following
\begin{proposition}\label{mean}
Let $\varepsilon,\beta, \alpha\geq 0$, $1<p,r<\infty$, $1<q< \infty$ and $0<\theta<1$ be such that $\alpha=\theta \varepsilon+(1-\theta)\beta$ and $1/r=\theta/p+(1-\theta)/q$. Then for all $f\in L^p_{\varepsilon}(\mu_\chi)\cap L^q_{\beta}(\mu_\chi)$ 
\begin{equation}\label{Holder}
\|f\|_{L^r_\alpha(\mu_\chi)}\lesssim \|f\|_{L^p_{\varepsilon}(\mu_\chi)}^{\theta}\|f\|_{L^q_{\beta}(\mu_\chi)}^{1-\theta}.
\end{equation}
If $q=\infty$ and $\beta=0$, so that $\alpha=\theta \varepsilon$ and $1/r=\theta/p$, then for all $f\in L^p_{\varepsilon}(\mu_\chi)\cap L^\infty$ 
\begin{equation}
\|f\|_{L^r_\alpha(\mu_\chi)}\lesssim \|f\|_{L^p_{\varepsilon}(\mu_\chi)}^{\theta}\|f\|_{\infty}^{1-\theta}.
\end{equation}
 \end{proposition}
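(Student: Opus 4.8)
The plan is to read both inequalities off the fundamental property of the complex interpolation method: for every compatible couple $(A_0,A_1)$ and every $\theta\in(0,1)$ one has $\|f\|_{[A_0,A_1]_\theta}\le \|f\|_{A_0}^{1-\theta}\|f\|_{A_1}^{\theta}$. Taking $A_0=L^q_{\beta}(\mu_\chi)$ and $A_1=L^p_{\varepsilon}(\mu_\chi)$, the hypotheses $\alpha=\theta\varepsilon+(1-\theta)\beta$ and $\frac1r=\frac\theta p+\frac{1-\theta}{q}$ are exactly the relations $\alpha=(1-\theta)\beta+\theta\varepsilon$ and $\frac1r=\frac{1-\theta}{q}+\frac\theta p$ read off the endpoints. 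Hence it suffices to establish the continuous inclusion
\[
[L^q_{\beta}(\mu_\chi),L^p_{\varepsilon}(\mu_\chi)]_\theta \hookrightarrow L^r_{\alpha}(\mu_\chi),
\]
because then $\|f\|_{L^r_\alpha(\mu_\chi)}\lesssim \|f\|_{[L^q_{\beta}(\mu_\chi),L^p_{\varepsilon}(\mu_\chi)]_\theta}\le \|f\|_{L^q_{\beta}(\mu_\chi)}^{1-\theta}\|f\|_{L^p_{\varepsilon}(\mu_\chi)}^{\theta}$, which is precisely~\eqref{Holder}.

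To prove this inclusion I would invoke Stein's interpolation theorem for an analytic family of operators, which is the natural device when both the integrability and the smoothness indices vary simultaneously (the two parts of Lemma~\ref{lemma_interpolation} handle them only separately). Fix $c>0$ large enough as in~\eqref{equivNorms} and, for $z$ in the closed strip $0\le\Re z\le 1$, set $S_z\coloneqq (\Delta_\chi+cI)^{[(1-z)\beta+z\varepsilon]/2}$, so that $S_\theta=(\Delta_\chi+cI)^{\alpha/2}$. On the line $\Re z=0$ we factor $S_{it}=(\Delta_\chi+cI)^{\beta/2}(\Delta_\chi+cI)^{it(\varepsilon-\beta)/2}$: the first factor maps $L^q_{\beta}(\mu_\chi)$ boundedly into $L^q(\mu_\chi)$ by~\eqref{equivNorms}, while the imaginary power is bounded on $L^q(\mu_\chi)$, with operator norm of admissible growth in $t$, by the boundedness of $\Delta_\chi^{iu}$ already used in Lemma~\ref{lemma_interpolation}. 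Symmetrically, $S_{1+it}$ maps $L^p_{\varepsilon}(\mu_\chi)$ boundedly into $L^p(\mu_\chi)$. Since $[L^q(\mu_\chi),L^p(\mu_\chi)]_\theta=L^r(\mu_\chi)$ by Lemma~\ref{lemma_interpolation}(ii), Stein's theorem gives that $S_\theta=(\Delta_\chi+cI)^{\alpha/2}$ maps $[L^q_{\beta}(\mu_\chi),L^p_{\varepsilon}(\mu_\chi)]_\theta$ into $L^r(\mu_\chi)$; by~\eqref{equivNorms} this is exactly the desired inclusion into $L^r_{\alpha}(\mu_\chi)$.

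For the endpoint case $q=\infty$, $\beta=0$ the same scheme runs with $S_z=(\Delta_\chi+cI)^{z\varepsilon/2}$, but the computation on $\Re z=0$ must be rerouted, since the imaginary powers $(\Delta_\chi+cI)^{it\varepsilon/2}$ are not bounded on $L^\infty$. I would instead take $\mathfrak{bmo}(\mu_\chi)$ as the target endpoint: arguing as for the local Riesz transforms in Theorem~\ref{teoLocalRiesz}, these zeroth-order operators map $L^\infty$ boundedly into $\mathfrak{bmo}(\mu_\chi)$ with admissible growth, and the Hardy--BMO interpolation scale of Meda and Volpi yields $[\mathfrak{bmo}(\mu_\chi),L^p(\mu_\chi)]_\theta=L^r(\mu_\chi)$ with $\frac1r=\frac\theta p$. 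Thus $S_\theta$ maps $[L^\infty,L^p_{\varepsilon}(\mu_\chi)]_\theta$ into $L^r(\mu_\chi)$, and one concludes as before, now with $\|f\|_\infty$ in place of $\|f\|_{L^q_\beta(\mu_\chi)}$.

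The main obstacle is precisely this $L^\infty$ endpoint: the clean "lifting plus imaginary power" argument fails because imaginary powers are unbounded on $L^\infty$, so one must pass through the Hardy--BMO scale and take care that the interpolation there indeed reproduces $L^r(\mu_\chi)$ for the relevant range of $r$. The second point requiring attention, shared by both cases, is the admissibility hypothesis of Stein's theorem, namely that the operator norms of $(\Delta_\chi+cI)^{it\gamma/2}$ on the boundary lines grow at most like $e^{a|t|}$ with $a<\pi$; this is quantitative input from Meda's estimates and is the only ingredient beyond the lifting identity~\eqref{equivNorms} and Lemma~\ref{lemma_interpolation}.
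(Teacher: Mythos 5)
Your route is genuinely different from the paper's. The paper proves this proposition by Littlewood--Paley--Stein theory: since $e^{-t\Delta_\chi}$ is a diffusion semigroup (Lemma~\ref{estimates_ptgamma}~(i)), the norm $\|\Delta_\chi^{\alpha/2}f\|_{L^r(\mu_\chi)}$ is equivalent to the $L^r(\mu_\chi)$ norm of a square function $g_\alpha(f)$ built from $t^{k-\alpha/2}\partial_t^k e^{-t\Delta_\chi}f$, and the multiplicative inequality follows from a \emph{pointwise} H\"older inequality $g_\alpha(f)\le g_\varepsilon(f)^{\theta}g_\beta(f)^{1-\theta}$ followed by H\"older in $L^r$, as in~\cite[Proposition 4.2]{PV}. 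A notable payoff of that route is that the endpoint $q=\infty$ is essentially free: one bounds the $\beta=0$ factor pointwise by $\|f\|_\infty$ using the analyticity and $L^\infty$-contractivity of the semigroup, with no recourse to $\mathfrak{bmo}$. Your scheme for $1<q<\infty$ --- reduce to the embedding $[L^q_\beta(\mu_\chi),L^p_\varepsilon(\mu_\chi)]_\theta\hookrightarrow L^r_\alpha(\mu_\chi)$ via the multiplicative property of the complex method, and prove the embedding by abstract Stein interpolation of the analytic family $(\Delta_\chi+cI)^{[(1-z)\beta+z\varepsilon]/2}$ --- is workable, since Meda's theorem supplies bounded imaginary powers on $L^p(\mu_\chi)$ and $L^q(\mu_\chi)$ with admissible growth; you should, however, cite the abstract (Banach-couple) version of Stein's theorem explicitly, as the classical statement for $L^p$ scales does not directly cover targets such as $L^q_\beta(\mu_\chi)$.

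The genuine gap is in your treatment of $q=\infty$. You assert that the imaginary powers $(\Delta_\chi+cI)^{it\varepsilon/2}$ map $L^\infty$ boundedly into $\mathfrak{bmo}(\mu_\chi)$ with admissible growth in $t$, ``arguing as for the local Riesz transforms in Theorem~\ref{teoLocalRiesz}.'' Nothing of the sort is proved in the paper, and it does not follow formally from Theorem~\ref{teoLocalRiesz}: that theorem concerns the specific kernels $X_J(\Delta_\chi+cI)^{-m/2}$ and its proof rests on the pointwise Gaussian bounds of Lemma~\ref{estimates_ptgamma}~(iii) applied to $\int_0^\infty t^{m/2-1}e^{-ct}X_Jp_t^\chi\,\dd t$. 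For the imaginary powers one would have to redo the entire kernel analysis for $\int_0^\infty t^{iu-1}e^{-ct}p_t^\chi\,\dd t$, including an integrable-decay/H\"ormander condition \emph{with explicit, at most exponential dependence of all constants on $u$}, since Stein's theorem needs quantitative control of the boundary norms. This is a substantive piece of singular-integral work, not a routine adaptation, and as written your proof of the second inequality does not go through. Either supply that analysis, or replace the $\mathfrak{bmo}$ detour by the semigroup square-function argument, where the $L^\infty$ factor is absorbed pointwise.
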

\begin{proof}
Recall that 
by Lemma~\ref{estimates_ptgamma}~(i), $\Delta_\chi$ generates a
diffusion semigroup and thus Littlewood-Paley-Stein theory applies
together with~\cite{Meda}.  Arguing as in~\cite[Proposition 4.2]{PV},
while considering weighted norms
 and operators, the conclusion follows.  We omit the details.
\end{proof}

\subsection{Proof of Theorem~\ref{algebraproduct}}
This proof cannot be derived directly
from~\cite[Theorem 1.2]{PV}, though its proof follows the same lines. 
Hence, we provide all its necessary ingredients.

Let $R>0$, $\alpha \in (0,1)$ and $q\in [1,\infty)$. We define the maximal operator
\begin{equation}\label{MRbeta}
M^R f(x)\coloneqq \sup_{B\in\mathcal B_R,\, x\in B} \frac{1}{\rho(B)} \int_B|f|	\, \dd \rho.
\end{equation}
We recall that $M^{R}$ is bounded on $L^p(\rho)$ for every $p\in (1,\infty]$ and it is of weak type $(1,1)$. By~\eqref{localdoubmugamma}, it is easy to see that for every character $\chi$ of $G$ and $R>0$
\begin{equation}\label{MRdeltafuori}
M^R (\chi f) \approx \chi M^R f
\end{equation}
with a constant independent of $f$. Therefore, $M^{R}$ is also bounded on $L^p(\mu_\chi)$ for every $p \in (1,\infty]$ by Proposition~\ref{deltadentro}. 

We also define the functionals
\begin{align*}
\Omega^{(q)}_f(x,r)&\coloneqq\sup\left\{\left(\frac{1}{\rho(B)}\int_B|f-f_B|^q\dd \rho\right)^{1/q}:\, B\in\mathcal B_r,\, x\in B  \right\},\qquad \Omega_f \coloneqq \Omega_f^{(1)},\\
\Omega^{\infty}_f(x,r)&\coloneqq \sup\{ \|f-f_B\|_{\infty} :\, B\in\mathcal B_r,\,  x\in B  \}
\end{align*}
and 
\[
\begin{aligned}
G^{R}_{\alpha,q}f(x)&=\left( \int_0^R[r^{-\alpha} \Omega^{(q)}_f(x,r)]^2   \frac{\dd r}{r}\right)^{1/2}, \quad G^R_{\alpha}\coloneqq G^{R}_{\alpha,1}, \quad G^\loc_{\alpha, q}\coloneqq G^{1}_{\alpha,q}, \quad G^\loc_{\alpha}\coloneqq G^{1}_{\alpha,1}.
\end{aligned}
\]
\begin{lemma}\label{lemmaGlocq}
Let $f$ be a locally integrable function and $\alpha \in (0,1)$. For every $r\in (1,\infty)$
\begin{equation}\label{GlocSloc}
\| G_\alpha^\loc f\|_{L^p(\mu_\chi)} \lesssim \| S_\alpha^\loc f\|_{L^p(\mu_\chi)} + \|f\|_{L^p(\mu_\chi)},
\end{equation}
while
\begin{itemize}
\item[(i)] if $p\in (1,2]$, $\alpha\leq\frac{d}{p}$, $q<\frac{dp}{d-\alpha p}$, then
\[
\|G^{R_1}_{\alpha,q}f\|_{L^p(\mu_\chi)}+\|f\|_{L^p(\mu_\chi)} \approx \|G^{R_2}_{\alpha}f\|_{L^p(\mu_\chi)}+\|f\|_{L^p(\mu_\chi)};
\]
\item[(ii)] if $p\in [2,\infty)$, $\alpha\leq\frac{d}{p}$, $q<\frac{2d}{d-2\alpha}$, then
\[
\|G^{R_1}_{\alpha,q}f\|_{L^p(\mu_\chi)}+\|f\|_{L^p(\mu_\chi)} \approx \|G^{R_2}_{\alpha}f\|_{L^p(\mu_\chi)}+\|f\|_{L^p(\mu_\chi)}.
\]
\end{itemize}
\end{lemma}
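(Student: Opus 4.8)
The plan is to reduce all three assertions to the unweighted case (the measure $\rho$, i.e.\ $\chi=1$), where they follow from the arguments of~\cite{CRTN,PV}. The bridge is the same device already used for $S^R_\alpha$ in Theorem~\ref{Salphaloc} and for $M^R$ in~\eqref{MRdeltafuori}: multiplication by $\chi^{1/p}$ commutes, up to a harmless lower-order term, with each oscillation functional. Concretely, I would first establish the pointwise equivalence
\[
\chi^{1/p}(x)\,G^{R}_{\alpha,q}f(x)\approx G^{R}_{\alpha,q}\big(\chi^{1/p}f\big)(x)+\chi^{1/p}(x)\,|f(x)|,
\]
reasoning exactly as in Theorem~\ref{Salphaloc}: on a ball $B\in\mathcal B_r$ with $r\le R$ one has $\chi^{1/p}\approx\chi^{1/p}(c_B)$ by~\eqref{localdoubmugamma}, while $|\chi^{1/p}(y^{-1})-1|\approx|y|$ by the smoothness of $\chi$ and~\eqref{Xdelta}; hence inserting $\chi^{1/p}$ into the oscillation $\Omega^{(q)}$ produces only the extra term $r^{\alpha}|f|$, which after integration in $\tfrac{\dd r}{r}$ gives the summand $\chi^{1/p}|f|$. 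Combined with Proposition~\ref{deltadentro}, which gives $\|F\|_{L^p(\mu_\chi)}=\|\chi^{1/p}F\|_{L^p(\rho)}$, this reduces everything to statements for $\rho$.

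For~\eqref{GlocSloc} in the unweighted setting I would argue pointwise. If $B$ has radius $r_B\le r$ and contains $x$, then $B\subseteq B(x,2r_B)=xB_{2r_B}$, so by local doubling~\eqref{pallepiccole} and the cancellation of the modular factors (since $\delta\approx1$ on balls of radius $\le1$)
\[
\frac{1}{\rho(B)}\int_B|f-f_B|\,\dd\rho\lesssim\frac{1}{V(2r_B)}\int_{B_{c\,r_B}}|f(xy^{-1})-f(x)|\,\dd\rho(y).
\]
Taking the supremum over such $B$ bounds $\Omega_f(x,r)$ by the integrand defining $S^{R}_\alpha$ at comparable scales; since $\alpha>0$, a Hardy-type inequality in the scale variable then controls $\int_0^1[r^{-\alpha}\Omega_f(x,r)]^2\tfrac{\dd r}{r}$ by $\big(S^{\loc}_\alpha f(x)\big)^2$ up to a term $\lesssim M^1f(x)$ coming from the balls that are far from being centred at $x$. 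The $L^p(\rho)$-boundedness of $M^1$ then yields~\eqref{GlocSloc}.

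In (i) and (ii) the direction $G^{R_2}_\alpha f\lesssim G^{R_1}_{\alpha,q}f$ is immediate from Hölder's inequality ($\Omega^{(1)}_f\le\Omega^{(q)}_f$), the change of finite cutoff costing only a term $\lesssim\|f\|_{L^p}$ by a routine localization. The substance is the reverse self-improvement $\|G^{R_1}_{\alpha,q}f\|_{L^p(\rho)}\lesssim\|G^{R_2}_\alpha f\|_{L^p(\rho)}+\|f\|_{L^p(\rho)}$, which raises the oscillation exponent from $1$ up to the local Sobolev threshold; this is where the conditions $\alpha\le d/p$ and $q<\frac{dp}{d-\alpha p}$ (for $p\le2$), resp.\ $q<\frac{2d}{d-2\alpha}$ (for $p\ge2$), enter. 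I would derive it from a local fractional Sobolev--Poincaré inequality on $(G,d_C,\rho)$, which by a dyadic chaining argument dominates $\Omega^{(q)}_f(x,r)$ by a fractional maximal function of the densities defining $G^{R}_\alpha f$; the $L^p(\rho)$-boundedness of that operator, valid precisely in the stated range of $q$, concludes. For $p\ge2$ the natural exponent is the $L^2$-based one, so I would first treat $q=2$ through Littlewood--Paley--Stein theory for the diffusion semigroup $e^{-t\Delta_\chi}$ (Lemma~\ref{estimates_ptgamma}(i)), as in Proposition~\ref{mean}, and then reach $q<\frac{2d}{d-2\alpha}$ by the Sobolev step.

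The main obstacle is exactly this self-improvement: proving the local fractional Sobolev--Poincaré inequality in the sub-Riemannian setting with the sharp dependence on $d$ and $\alpha$, and correctly identifying the endpoint exponents---in particular the split into the regimes $p\le2$ and $p\ge2$, the latter forcing one to route the argument through $q=2$. By contrast, the weight-commutation bridge and the comparison~\eqref{GlocSloc} are routine variants of computations already performed for $S^R_\alpha$ and $M^R$ earlier in the paper.
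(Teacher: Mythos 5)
Your overall architecture coincides with the paper's: transfer the weight via Proposition~\ref{deltadentro} and the commutation identity of Theorem~\ref{Salphaloc} (your pointwise identity for $G^R_{\alpha,q}$ needs the local \emph{average} of $|f|$, i.e.\ $M^Rf$, rather than $|f(x)|$ as the error term, and the extra factor is $r\,\overline{|f|}_B$, not $r^{\alpha}|f|$ --- harmless after taking $L^p$ norms); prove \eqref{GlocSloc} by a pointwise domination of $G^{\loc}_\alpha$ by an $S$-functional (the paper imports $G_\alpha^{\loc}f\lesssim S^2_\alpha f$ from~\cite{PV} and then uses the $R_1\leftrightarrow R_2$ equivalence for $S$, transferred to $\mu_\chi$ by \eqref{toproof}); and prove (i)--(ii) by a local fractional Sobolev--Poincar\'e self-improvement of the oscillation exponent, which is exactly the inequality \eqref{ineqPVpreHardy} from~\cite{PV} followed by Hardy's inequality in the scale variable. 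Up to this point you have correctly identified both the strategy and the main obstacle.

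There is, however, a genuine gap at the final step of (i)--(ii). After Hardy's inequality one is left with bounding
\[
\Bigl\|\Bigl(\int_0^{8R} r^{-2\alpha-1}\bigl[M^3(|\Omega_f(\cdot,r)|^{p_0})^{1/p_0}\bigr]^2\,\dd r\Bigr)^{1/2}\Bigr\|_{L^p(\mu_\chi)}
\quad\text{by}\quad
\|G^{R}_{\alpha}f\|_{L^p(\mu_\chi)},
\]
which is a \emph{square function in the scale variable} with a maximal operator inside. The scalar $L^p$-boundedness of a (fractional) maximal operator, which is what you invoke to ``conclude'', cannot close this estimate: one needs the Fefferman--Stein vector-valued maximal inequality
$\|(\sum_k (M^3 f_k)^s)^{1/s}\|_{L^p(\mu_\chi)}\lesssim\|(\sum_k |f_k|^s)^{1/s}\|_{L^p(\mu_\chi)}$
(the paper gets it from \cite[Proposition 7]{CRTN} together with \eqref{MRdeltafuori} and Proposition~\ref{deltadentro}), applied with $s=2/p_0$ and outer exponent $p/p_0$. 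This is also why your diagnosis of the split at $p=2$ is off: the two regimes arise from the requirement $p_0<\min(p,2)$ in the vector-valued inequality (forcing $1/q=1/p_0-\beta>1/p-\alpha/d$ for $p\le 2$ and $>1/2-\alpha/d$ for $p\ge 2$), not from any $L^2$-based Littlewood--Paley--Stein consideration. The proposed detour through the semigroup $g$-function for $p\ge2$ would not work as stated, since $\Omega^{(2)}_f$ is a ball-oscillation functional with no direct link to $e^{-t\Delta_\chi}$; the paper simply runs the same maximal-function argument for (ii) with $p_0$ close to $2$.
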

\begin{proof}
To prove~\eqref{GlocSloc}, we observe that it is proved in~\cite[pp.\ 27-28]{PV} that 
\begin{equation}\label{GalphaS2}
G_\alpha^\loc f \lesssim S^2_\alpha f.
\end{equation}
By~\cite[Lemma 4.4]{PV}, we have that $\|S_\alpha^{R_1}f\|_{L^p} + \|f\|_{L^p} \approx \|S_\alpha^{R_2}f\|_{L^p} +  \|f\|_{L^p}$ for every $R_1,R_2>0$. By Proposition~\ref{deltadentro} and~\eqref{toproof} then
\[
\|S_\alpha^{R_1}f\|_{L^p(\mu_\chi)} + \|f\|_{L^p(\mu_\chi)} \approx \|S_\alpha^{R_2}f\|_{L^p(\mu_\chi)} +  \|f\|_{L^p(\mu_\chi)}.
\]
This together with~\eqref{GalphaS2} yield~\eqref{GlocSloc}.

We now prove (i). The inequality $\gtrsim$ is straightforward and holds for every $p\in (1,\infty)$ and $q>1$. To prove the converse inequality, we take $p\in (1,2]$, $\alpha\leq \frac{d}{p}$, $q<\frac{dp}{d-p\alpha}$ and we choose $p_0<p$ and $0\leq \beta <\frac{\alpha}{d}\leq \frac{1}{p}<1$ such that $\frac{1}{q}=\frac{1}{p_0}-\beta$. The starting point is the inequality
\begin{equation}\label{ineqPVpreHardy}
\Omega_f^{(q)}(x,r)\lesssim V({r})^{\beta} \int_0^{8r}  V(s)^{-\beta}  M^3(|\Omega_f(\cdot,s)|^{p_0})^{\frac{1}{p_0}}(x)\frac{\dd s}{s}
\end{equation}
that can be found in~\cite[p.\ 26]{PV}. We now apply the following version of Hardy's inequality:
\[
\int_0^R \frac{1}{r^\nu} \left( \frac{1}{r} \int_0^r g(s)\, \dd s\right)^p \, \dd r \lesssim \int_0^R \frac{1}{r^\nu} g(r)^p\, \dd r, \qquad p>1,	\quad 1-p<\nu<1,\quad g\geq 0,
\]
and obtain (the first inequality by~\eqref{ineqPVpreHardy}; while Hardy's inequality may be applied since $\alpha>\beta d$)
\begin{align*}
G^R_{\alpha, q}f(x) 
&\lesssim \left( \int_0^R \left[ r^{-\alpha} r^{d\beta} \int_0^{8r} s^{-d\beta} M^3(|\Omega_f(\cdot,s)|^{p_0})^{\frac{1}{p_0}}(x)\frac{\dd s}{s}\right]^2 \frac{dr}{r}\right)^{1/2} \\
& \lesssim \left( \int_0^{8R} r^{-2\alpha-1} \left[ M^3(|\Omega_f(\cdot,r)|^{p_0})^{\frac{1}{p_0}}(x)\right]^2 \, \dd r \right)^{1/2}.
\end{align*}
One can now argue as in~\cite[p.\ 318]{CRTN}, since
\[
\left\| \left[ \sum_{k=0}^\infty (M^3 f_k)^q\right]^{1/q}\right\|_{L^p(\mu_\chi)} \lesssim \left\| \left[ \sum_{k=0}^\infty |f_k|^q\right]^{1/q}\right\|_{L^p(\mu_\chi)}
\]
by~\cite[Proposition 7]{CRTN} together with~\eqref{MRdeltafuori} and Proposition~\ref{deltadentro}. The proof of (ii) is analogous.
\end{proof}

By means of the results stated above, Theorem~\ref{algebraproduct} may be proved exactly as~\cite[Theorem 1.2]{PV}. We omit the details. We finally state the following corollary, which is a consequence of the algebra property of $L^p_\alpha(\lambda) \cap L^\infty$ (Theorem~\ref{algebraproduct}) together with the embedding $L^p_\alpha(\lambda)\hookrightarrow L^\infty$ (Theorem~\ref{SET}, (c)).
\begin{corollary}
For every $p\in (1,\infty)$ and $\alpha>d/p$, $L^p_\alpha(\lambda)$ is an algebra.
\end{corollary}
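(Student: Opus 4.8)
The plan is to observe that the hypothesis $\alpha>d/p$ forces $L^p_\alpha(\lambda)$ to sit inside $L^\infty$, so that the \emph{conditional} algebra property already established in Theorem~\ref{algebraproduct} becomes unconditional on $L^p_\alpha(\lambda)$. First I would specialise Theorem~\ref{algebraproduct} to $\chi=\delta$, so that $\mu_\chi=\lambda$, and choose the exponents so as to place the two $L^\infty$ factors in the endpoint slots. Concretely, taking $p_1=q_2=\infty$ and $p_2=q_1=p$ satisfies the constraints $\tfrac1p=\tfrac{1}{p_i}+\tfrac{1}{q_i}$ for $i=1,2$ (this is admissible precisely because the theorem allows $p_1,q_2\in(1,\infty]$ while only $p,p_2,q_1$ are required to be finite), and yields, for all $f,g\in L^\infty\cap L^p_\alpha(\lambda)$,
\[
\|fg\|_{L^p_\alpha(\lambda)} \lesssim \|f\|_{\infty}\,\|g\|_{L^p_\alpha(\lambda)} + \|f\|_{L^p_\alpha(\lambda)}\,\|g\|_{\infty}.
\]

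Next I would invoke Theorem~\ref{SET},~(c): since $\alpha>d/p$, the embedding $L^p_\alpha(\lambda)\hookrightarrow L^\infty$ holds, so every $f\in L^p_\alpha(\lambda)$ automatically lies in $L^\infty$ with $\|f\|_{\infty}\lesssim\|f\|_{L^p_\alpha(\lambda)}$. In particular $L^p_\alpha(\lambda)\cap L^\infty = L^p_\alpha(\lambda)$, so the displayed inequality is valid for all $f,g\in L^p_\alpha(\lambda)$; substituting $\|f\|_{\infty}\lesssim\|f\|_{L^p_\alpha(\lambda)}$ and $\|g\|_{\infty}\lesssim\|g\|_{L^p_\alpha(\lambda)}$ into its right-hand side gives
\[
\|fg\|_{L^p_\alpha(\lambda)} \lesssim \|f\|_{L^p_\alpha(\lambda)}\,\|g\|_{L^p_\alpha(\lambda)},
\]
which is exactly the assertion that $L^p_\alpha(\lambda)$ is a Banach algebra under pointwise multiplication.

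I expect no genuine obstacle in this argument, as both ingredients are already in hand; the only point requiring care is the exponent bookkeeping, namely checking that the choice $p_1=q_2=\infty$, $p_2=q_1=p$ is legitimate in Theorem~\ref{algebraproduct}, which it is because that result explicitly permits $p_1$ and $q_2$ to be infinite. It is worth emphasising why the hypothesis is the strict inequality $\alpha>d/p$: it is exactly the range in which Theorem~\ref{SET},~(c) delivers the embedding into $L^\infty$, and hence the range in which the two $L^\infty$ norms on the right-hand side can be absorbed into the Sobolev norm. At the borderline $\alpha=d/p$ one only has the embeddings into $L^q$ for $q\geq p$ from part~(b), which do not suffice to close the estimate, so the strictness is genuinely used.
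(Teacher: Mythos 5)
Your proposal is correct and follows exactly the paper's argument: the corollary is deduced by combining the algebra property of $L^p_\alpha(\lambda)\cap L^\infty$ from Theorem~\ref{algebraproduct} (with $\chi=\delta$ and the endpoint exponents $p_1=q_2=\infty$, $p_2=q_1=p$) with the embedding $L^p_\alpha(\lambda)\hookrightarrow L^\infty$ of Theorem~\ref{SET},~(c). The paper states this in one sentence; your write-up merely makes the exponent bookkeeping explicit, and it checks out.
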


\subsection{Negative results: the affine group of the line}\label{Sec_ax+b} In this subsection we show that an algebra property for the Sobolev spaces $L^p_\alpha(\mu_\chi)$, with $\mu_\chi \neq \lambda$, cannot hold in general on nonunimodular Lie groups, for it fails on the ``$ax+b$'' group.
 
We recall that the affine group of the line, usually referred to as the ``$ax+b$'' group, is the topological space $G = \R \ltimes \R_+$ endowed with the product law $(x,a)(x',a')=(x+ax',aa')$. The identity is $e= (0,1)$, the right Haar measure is $\dd \rho(x,a) = a^{-1} \, \dd x\, \dd a$ and the modular function is $\delta(x,a)=a^{-1}$. A basis for the Lie algebra $\mathfrak{g}$ is $\{X_0,X_1\}$, where $X_0 = a\partial_a$ and $X_1= a\partial_x$.  

\smallskip

Let $G$ be the ``$ax+b$'' group. If $\chi$ is a positive character of $G$, then its differential $\chi'$ vanishes on $[\mathfrak{g},\mathfrak{g}]=\langle X_0\rangle$. Then $\partial_x \chi=0$, i.e.\ $\chi(x,a)=\chi(0,a)$. Since $\chi (0,a)$ is a positive character of $\R^+$, the character $\chi$ must be of the form $\chi(x,a)= a^{-\gamma}$ for some $\gamma \in \R$. In other words, all characters of $G$ are of the form $\delta^\gamma$ for some $\gamma \in \R$.  

In the following proposition, we show that if $\gamma\neq 1$ (i.e.\ $\mu_{\delta^\gamma}\neq \lambda$), then $L^p_\alpha(\mu_{\delta^\gamma})$ is never an algebra; and that in this case the embedding of $L^p_\alpha(\mu_{\delta^\gamma})$ into $L^\infty$ fails rather dramatically. Indeed, $L^p_\alpha(\mu_{\delta^\gamma})$ is not even embedded in any $\mathfrak{bmo}$ space. The proof is inspired by the proof of~\cite[Theorem 3.3]{PV}. See also~\cite[Appendix B.2.6]{Robinson}.

\begin{proposition}\label{nobmo}
Let $G$ be the ``$ax+b$'' group and $\gamma \in \R\setminus \{ 1\}$. Then
\begin{itemize}
\item[(i)] for every $p\in (1,\infty)$ and $\alpha \geq 0$, $L^p_\alpha(\mu_{\delta^\gamma})$ is not an algebra;
\item[(ii)] for every $p\in (1,\infty)$, $\alpha\geq 0$ and $\eta\in \R$, there exists a function $g\in L^p_\alpha(\mu_{\delta^\gamma})$ which does not belong to $\mathfrak{bmo}(\mu_{\delta^\eta})$. In particular, $L^p_\alpha(\mu_{\delta^\gamma})$ does not embed in any $\mathfrak{bmo}(\mu_{\delta^\eta})$.
\end{itemize}
\end{proposition}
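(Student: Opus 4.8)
The plan is to exploit the exact homogeneity of the Sobolev norm under left translations $L_yf(z)=f(y^{-1}z)$. Since the fields $\mathbf X$ are left-invariant, so is $\Delta_\chi$ and each of its fractional powers; combining this with the elementary scaling $\|L_yh\|_{L^p(\mu_\chi)}=(\chi\delta^{-1})^{1/p}(y)\,\|h\|_{L^p(\mu_\chi)}$ (as already used in Section~\ref{Sec_SET}) yields, for every $\alpha\ge0$ and $y\in G$,
\begin{equation}\label{eq:homog}
\|L_yf\|_{L^p_\alpha(\mu_\chi)}=(\chi\delta^{-1})^{1/p}(y)\,\|f\|_{L^p_\alpha(\mu_\chi)}.
\end{equation}
On the ``$ax+b$'' group with $\chi=\delta^\gamma$ one has $(\chi\delta^{-1})^{1/p}(0,a)=a^{(1-\gamma)/p}$, a nontrivial power of $a$ exactly when $\gamma\ne1$, and this single fact drives both statements.

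For (i) I would fix $\phi\in C_c^\infty(G)\setminus\{0\}$ and set $f_a=L_{(0,a)}\phi$. Because $f_a^2=L_{(0,a)}(\phi^2)$ and $\phi,\phi^2\in C_c^\infty(G)\subset L^p_\alpha(\mu_{\delta^\gamma})$, \eqref{eq:homog} gives
\[
\frac{\|f_a^2\|_{L^p_\alpha(\mu_{\delta^\gamma})}}{\|f_a\|_{L^p_\alpha(\mu_{\delta^\gamma})}^2}=a^{(\gamma-1)/p}\,\frac{\|\phi^2\|_{L^p_\alpha(\mu_{\delta^\gamma})}}{\|\phi\|_{L^p_\alpha(\mu_{\delta^\gamma})}^2}.
\]
Letting $a\to\infty$ if $\gamma>1$ and $a\to0$ if $\gamma<1$, the right-hand side tends to $+\infty$, so no bound $\|fg\|_{L^p_\alpha}\lesssim\|f\|_{L^p_\alpha}\|g\|_{L^p_\alpha}$ can hold and $L^p_\alpha(\mu_{\delta^\gamma})$ is not an algebra.

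For (ii) I would build a single lacunary superposition of translated bumps. Take $\phi\in C_c^\infty(G)$ non-constant, supported in a small ball $B_r$, and points $y_n=(0,a_n)$ with $a_n$ chosen so that $a_n^{1-\gamma}=2^{-Ln}$ (hence $a_n\to\infty$ if $\gamma>1$, $a_n\to0$ if $\gamma<1$), with $L$ large. Since $y_n^{-1}y_m=(0,a_m/a_n)$ and the ratios $a_m/a_n$ stay uniformly away from $1$ for $m\ne n$, the points are pairwise at Carnot--Carathéodory distance $\ge\kappa_0>0$; choosing $r<\kappa_0/2$ makes the translates $L_{y_n}\phi$ pairwise disjointly supported. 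Put $g=\sum_n n\,L_{y_n}\phi$. To check $g\in L^p_\alpha(\mu_{\delta^\gamma})$ it suffices, via the continuous embedding $L^p_k(\mu_{\delta^\gamma})\hookrightarrow L^p_\alpha(\mu_{\delta^\gamma})$ for an integer $k\ge\alpha$, to control the integer-order norm: by Proposition~\ref{prop:kintero}, disjointness of supports, and \eqref{eq:homog} at order $0$ applied to each $X_J\phi$,
\[
\|g\|_{L^p_k(\mu_{\delta^\gamma})}^p\approx\sum_{m\le k,\,J\in\mathfrak{l}^m}\sum_n n^p\,\delta^{\gamma-1}(y_n)\,\|X_J\phi\|_{L^p(\mu_{\delta^\gamma})}^p\lesssim\sum_n n^p a_n^{1-\gamma}=\sum_n n^p2^{-Ln}<\infty.
\]

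It then remains to show $g\notin\mathfrak{bmo}(\mu_{\delta^\eta})$ for every $\eta$, and here the decisive point is that the measure-normalized local oscillation of a translated bump is translation invariant. Using $\rho(yE)=\delta(y)^{-1}\rho(E)$ and the character identity $\delta^\eta(yw)=\delta^\eta(y)\delta^\eta(w)$, the substitution $z=y_nw$ gives $\mu_{\delta^\eta}(B(y_n,r))=\delta^{\eta-1}(y_n)\mu_{\delta^\eta}(B_r)$ and $\int_{B(y_n,r)}|L_{y_n}\phi-c|^2\,\dd\mu_{\delta^\eta}=\delta^{\eta-1}(y_n)\int_{B_r}|\phi-c|^2\,\dd\mu_{\delta^\eta}$ for every constant $c$. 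Since only the $n$-th bump meets $B(y_n,r)$, this yields
\[
\left(\frac{1}{\mu_{\delta^\eta}(B(y_n,r))}\int_{B(y_n,r)}|g-g_{B(y_n,r)}|^2\,\dd\mu_{\delta^\eta}\right)^{1/2}=n\left(\frac{1}{\mu_{\delta^\eta}(B_r)}\int_{B_r}|\phi-\phi_{B_r}|^2\,\dd\mu_{\delta^\eta}\right)^{1/2}=n\,c_0,
\]
with $c_0>0$ because $\phi$ is non-constant on $B_r$. As $B(y_n,r)\in\mathcal B_1$, the $\mathfrak{bmo}(\mu_{\delta^\eta})$-seminorm of $g$ is at least $n\,c_0$ for all $n$, so $g\notin\mathfrak{bmo}(\mu_{\delta^\eta})$; since $L^\infty\subset\mathfrak{bmo}(\mu_{\delta^\eta})$, the same $g$ rules out the embedding into every such space. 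I expect the only genuinely delicate points to be the uniform lower bound $\kappa_0$ on the pairwise distances (needed to keep supports and testing balls clean) and the passage from integer to fractional $\alpha$; the latter is dispatched for free by the monotonicity $L^p_k\hookrightarrow L^p_\alpha$, so fractional smoothness costs nothing.
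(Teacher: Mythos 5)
Your proof is correct, but it follows a genuinely different route from the paper's. The paper works with a single explicit witness $g_\nu(x,a)=\psi(x/a)\phi(a)a^{-\nu}$ (resp.\ $\widetilde g_\nu$), a bump in $x/a$ with a power weight in $a$, chosen so that $g_\nu\in L^p_k(\mu_{\delta^\gamma})$ for all $k$ while $g_\nu^2\notin L^p(\mu_{\delta^\gamma})$; non-membership in $\mathfrak{bmo}(\mu_{\delta^\eta})$ is then obtained by duality, pairing $g_\nu$ against a family of translated $\mathfrak{h}^1(\mu_{\delta^\eta})$-atoms and invoking \cite{MedaVolpi}. You instead rely solely on the exact homogeneity $\|L_yf\|_{L^p_\alpha(\mu_\chi)}=(\chi\delta^{-1})^{1/p}(y)\,\|f\|_{L^p_\alpha(\mu_\chi)}$, which is legitimate because the fractional powers of $\Delta_\chi$, being limits of right convolutions, commute with left translations; for $\chi=\delta^\gamma$, $\gamma\neq1$, everything is driven by the nontrivial character $\delta^{\gamma-1}$. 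This buys robustness: your argument uses nothing specific to the ``$ax+b$'' group and would show, on any Lie group and for any $\chi$ with $\chi\delta^{-1}$ a nontrivial (hence unbounded) character, both the failure of the multiplicative norm inequality and the failure of every $\mathfrak{bmo}$-embedding; the lacunary sum in (ii), with disjointly supported translates and the translation-covariance of the normalized oscillation, is a clean direct computation that dispenses with atoms and duality. What the paper's construction buys is a concrete formula and a strictly stronger form of (i), namely an $f$ in the space with $f^2$ outside it. Your (i) as written only disproves the estimate $\|fg\|_{L^p_\alpha(\mu_{\delta^\gamma})}\lesssim\|f\|_{L^p_\alpha(\mu_{\delta^\gamma})}\|g\|_{L^p_\alpha(\mu_{\delta^\gamma})}$, which suffices for ``not a Banach algebra'' but not, without an extra word, for non-closure under products; to get the latter either add the standard closed-graph/uniform-boundedness remark, or run the same lacunary summation as in your (ii) on the family $f_{a_k}$ (the ratio $\|f_a^2\|_{L^p(\mu_{\delta^\gamma})}/\|f_a\|_{L^p(\mu_{\delta^\gamma})}^2$ blows up with the same power of $a$, so one can choose coefficients $c_k$ with $\sum_k c_k^p\|f_{a_k}\|_{L^p_k(\mu_{\delta^\gamma})}^p<\infty$ but $\sum_k c_k^{2p}\|f_{a_k}^2\|_{L^p(\mu_{\delta^\gamma})}^p=\infty$). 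The two points you flag as delicate are indeed fine: the uniform separation of the $y_n$ follows from properness and continuity of $b\mapsto d_C(e,(0,b))$ together with the fact that the ratios $a_m/a_n$ range over a discrete set bounded away from $1$, and the passage to fractional $\alpha$ is covered by $L^p_k(\mu_{\delta^\gamma})\hookrightarrow L^p_\alpha(\mu_{\delta^\gamma})$, provided you note that membership in $L^p_k(\mu_{\delta^\gamma})$ is obtained by checking that the partial sums are Cauchy for the norm of Proposition~\ref{prop:kintero} and passing to the limit.
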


\begin{proof}
Let $\psi$, $\phi$, $\tilde \phi$ be functions on the real line such that
\[ \psi\in C^\infty_c (0,1), \qquad \psi\geq 0, \qquad \psi=1 \mbox{ on } [1/4,3/4],\] 
\[\phi\in C^\infty_c (-1,1), \qquad \phi\geq 0, \qquad \phi=1 \mbox{ on } [0,1/2],\]
\[\widetilde \phi\in C^\infty(\R), \qquad \widetilde \phi \geq 0, \qquad \widetilde \phi=0 \mbox{ on } [0,1/2], \qquad \widetilde \phi =1 \mbox{ on } [1,\infty).\]
Let $\nu \in \R$. Define
\begin{equation}\label{g-controesem-def}
g_\nu(x,a) = \psi(x/a) \phi(a) a^{-\nu}, \qquad \widetilde g_\nu(x,a) = \psi(x/a) \widetilde \phi(a) a^{-\nu}.
\end{equation}
We shall prove that
\begin{itemize}
\item if $\gamma<1$ and $\nu \in \left(\frac{1-\gamma}{2p}, \frac{1-\gamma}{p}\right)$, then $g_\nu\in L^p_k(\mu_{\delta^\gamma})$ for every $k\in \N$, $g_\nu^2\notin L^p(\mu_{\delta^\gamma})$, $g_\nu\notin \mathfrak{bmo}(\mu_{\delta^\eta})$ for every $\eta\in \R$;
\item if $\gamma>1$ and $\nu \in \left(\frac{1-\gamma}{p}, \frac{1-\gamma}{2p}\right)$, then $\widetilde g_\nu\in L^p_k(\mu_{\delta^\gamma})$ for every $k\in \N$, $\widetilde g_\nu^2\notin L^p(\mu_{\delta^\gamma})$, $\widetilde g_\nu\notin \mathfrak{bmo}(\mu_{\delta^\eta})$ for every $\eta\in \R$.
\end{itemize}
It is proved in~\cite[Theorem 3.3]{PV} that if $\nu \in \left(\frac{1}{2p}, \frac{1}{p}\right)$, then $g_\nu \in L^p_k$ for every $k\in \N$, but $g_\nu^2 \notin L^p$ and $g_\nu\notin L^\infty$.

\smallskip

Let $\gamma<1$. It is shown in~\cite[Proof of Theorem 3.3]{PV} that $X_0^{k_0}X_1^{k_1}g_\nu $ is a sum of terms of the form
\[
\widetilde\psi(x/a) \phi(a) a^{- \nu} +\widetilde\Psi(x,a)
\]
for some $\widetilde\psi\in C^\infty_c  (0,1)$ and $\widetilde\Psi\in C^\infty_c(G)$. From this, it is not hard to see that if $\nu \in \left(\frac{1-\gamma}{2p}, \frac{1-\gamma}{p}\right)$, then $g_\nu \in L^p_k(\mu_{\delta^\gamma})$ for every $k$ but $g_\nu^2\notin L^p(\mu_{\delta^\gamma})$.

We now show that $g_\nu \notin \mathfrak{bmo}(\mu_{\delta^\eta})$, independently of $\eta \in \R$: more precisely, for every $\eta \in \R$, there exists a family of $\mathfrak{h}^1(\mu_{\delta^\eta})$-atoms $A_y$, $y\in (0,1)$, such that
\begin{equation}\label{atomsnobmo}
\sup_y \int_G g_\nu \, A_y \, \dd\mu_{\delta^\eta} =\infty.
\end{equation}
The conclusion will then follow by~\cite[Theorem 2]{MedaVolpi}.

Let $R\subset G$ be the rectangle $[-1,1] \times [1/2,3/2]$ and split it into 
\[
R= R_+ \cup R_-, \qquad R_- = [-1,0]\times  [1/2,3/2], \quad R_+ = [0,1]\times  [1/2,3/2].
\]
It is not hard to see that the function
\[
A\coloneqq \mathbf{1}_{R_+} - \mathbf{1}_{R_-}
\]
is, up to a constant, an $\mathfrak{h}^1(\mu_{\delta^\eta})$-atom supported in the ball $B_s$ for $s>0$ large enough (recall Subsection~\ref{Subsec_HardyBMO}). Then, for $y\in (0,1)$ define
\[
A_y(x,a)\coloneqq A((0,y)^{-1}(x,a))\delta(0,y)^{1-\eta} =  A(y^{-1}x, y^{-1}a) y^{\eta-1}.
\]
It is easy to see that for every $y$ the function $A_y$ is a multiple of an $\mathfrak{h}^1(\mu_{\delta^\eta})$-atom supported in $B((0,y),s)$, with constant independent of $y$. Then
\begin{align*}
\int_{\R}\int_{\R_+} g_\nu (x,a)\, A_y(x,a)\, a^{-\eta} \, \frac{\dd x \, \dd a }{a}
& =y^{\eta-1} \int_{y/2}^{3y/2} a^{-\eta-\nu-1} \phi(a) \int_0^y \psi(x/a)\, \dd x \, \dd a \\
& =y^{\eta-1} \int_{y/2}^{3y/2} a^{-\eta-\nu} \phi(a) \int_0^{y/a} \psi(t)\, \dd t \, \dd a
\end{align*}
and since $\phi \approx 1$ in a (right) neighbourhood of $0$, and $\int_0^{y/a} \psi(t)\, \dd t  \approx 1$ since $y/a \approx 1$ for $a\in [y/2, 3y/2]$, if $y$ is small enough
\begin{align*}
\int_{\R}\int_{\R_+} g_\nu (x,a)\, A_y(x,a)\, a^{-\eta} \frac{\dd x \, \dd a }{a}
& \approx y^{\eta-1} \int_{y/2}^{3y/2} a^{-\eta-\nu} \, \dd a \approx y^{-\nu}.
\end{align*}
Since $\nu>0$,  $y^{-\nu} \to +\infty$ when $y\to 0$, which proves~\eqref{atomsnobmo} and all that concerns $\gamma<1$.

\smallskip

The case $\gamma>1$ is similar, and we omit the proof. One just has to replace $\phi$ with $\tilde \phi$, observe that $\nu<0$ and let $y\to +\infty$.
\end{proof}

\section{Applications to PDEs}\label{Sec_applications}
In this section, we shall prove local well-posedness results for nonlinear heat and Schr\"odinger equations. We shall follow a similar approach to that of~\cite[Section 6]{BBR}. 

If $\alpha \geq 0$, we say that a smooth function $A\colon \R^m \to \R$ is \emph{$\alpha$-admissible at 0} if 
\[\partial_1^{h_1}\dots\; \partial_m^{h_m} A(0,\dots,0)=0 \quad \mbox{whenever} \quad \sum_k h_k \leq [\alpha].\]
Moreover, we define 
\[Y^p_\alpha(\mu_\chi) \coloneqq L^p_\alpha(\mu_\chi) \cap L^\infty\]
to be the Banach space endowed with the norm $\| \cdot \|_{Y^p_\alpha(\mu_\chi) } \coloneqq \| \cdot \|_{L^p_\alpha(\mu_\chi)} + \| \cdot \|_\infty$. 
\begin{lemma}\label{lemmaG}
Let $\alpha \geq 0$. Let $G\colon \R^2 \to \R$ be smooth and $\alpha$-admissible at 0. For every $R>0$, every $p\in (1,\infty)$ and every $f_1,f_2\in L^p_\alpha(\mu_\chi)\cap L^\infty$ with $\|f_j\|_\infty \leq R$, $j=1,2$,
\[
\| G(f_1,f_2)\|_{L^p_\alpha(\mu_\chi)} \lesssim_R \| f_1\|_{L^p_\alpha(\mu_\chi)} + \| f_2\|_{L^p_\alpha(\mu_\chi)}.
\]
\end{lemma}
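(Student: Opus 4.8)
The plan is to argue by induction on the integer part $[\alpha]$, taking $0\le\alpha<1$ as the base case and using the recursive characterization of Proposition~\ref{alpha+1} to drop the regularity by one unit at each step. Throughout, the inductive statement is understood to hold for \emph{all} exponents $p\in(1,\infty)$ simultaneously, since the estimate of Theorem~\ref{algebraproduct} will force me to apply it at exponents different from the original $p$.

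For the base case $\alpha\in[0,1)$ I would invoke the local square-function characterization of Theorem~\ref{Salphaloc}, reducing matters to bounding $\|S^\loc_\alpha G(f_1,f_2)\|_{L^p(\mu_\chi)}$ and $\|G(f_1,f_2)\|_{L^p(\mu_\chi)}$. Since $G$ is smooth it is Lipschitz on the square $\{|y_1|,|y_2|\le R\}$ with constant $L_R=\sup_{|y|\le R}|\nabla G(y)|$, whence the pointwise increment bound $|G(f_1,f_2)(xy^{-1})-G(f_1,f_2)(x)|\le L_R\sum_{j}|f_j(xy^{-1})-f_j(x)|$. Minkowski's inequality in the variable $u$ then gives $S^\loc_\alpha G(f_1,f_2)\le L_R\,(S^\loc_\alpha f_1+S^\loc_\alpha f_2)$ pointwise, and Theorem~\ref{Salphaloc} controls the first term. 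For the second, $\alpha$-admissibility with $[\alpha]=0$ means exactly $G(0,0)=0$, so $|G(f_1,f_2)|\le L_R(|f_1|+|f_2|)$ and $\|G(f_1,f_2)\|_{L^p(\mu_\chi)}\lesssim_R\|f_1\|_{L^p(\mu_\chi)}+\|f_2\|_{L^p(\mu_\chi)}$.

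For the inductive step let $\alpha\ge1$ and assume the statement at regularity $\alpha-1$. By Proposition~\ref{alpha+1},
\[
\|G(f_1,f_2)\|_{L^p_\alpha(\mu_\chi)}\approx \|G(f_1,f_2)\|_{L^p_{\alpha-1}(\mu_\chi)}+\sum_{i\in\mathfrak{l}}\|X_i G(f_1,f_2)\|_{L^p_{\alpha-1}(\mu_\chi)}.
\]
As $\alpha$-admissibility implies $(\alpha-1)$-admissibility, the first summand is $\lesssim_R\|f_1\|_{L^p_{\alpha-1}(\mu_\chi)}+\|f_2\|_{L^p_{\alpha-1}(\mu_\chi)}\le\|f_1\|_{L^p_\alpha(\mu_\chi)}+\|f_2\|_{L^p_\alpha(\mu_\chi)}$ by the inductive hypothesis. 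For the derivative terms the chain rule gives $X_iG(f_1,f_2)=\sum_j(\partial_jG)(f_1,f_2)\,X_if_j$, and I would estimate each product with Theorem~\ref{algebraproduct}, the crux being the choice of exponents. For the first term I take $p_1=\infty$, $q_1=p$, so that $\|(\partial_jG)(f_1,f_2)\|_{L^\infty}\le\sup_{|y|\le R}|\partial_jG(y)|$ is a constant depending on $R$ while $\|X_if_j\|_{L^p_{\alpha-1}(\mu_\chi)}\lesssim\|f_j\|_{L^p_\alpha(\mu_\chi)}$ by Proposition~\ref{alpha+1}. For the second term I take $p_2=\tfrac{p\alpha}{\alpha-1}$ and $q_2=\alpha p$, so that $\tfrac1{p_2}+\tfrac1{q_2}=\tfrac1p$; here $\partial_jG$ is $(\alpha-1)$-admissible, because its derivatives of order $\le[\alpha]-1=[\alpha-1]$ are derivatives of $G$ of order $\le[\alpha]$ and hence vanish at the origin, so the inductive hypothesis bounds $\|(\partial_jG)(f_1,f_2)\|_{L^{p_2}_{\alpha-1}(\mu_\chi)}$, while $\|X_if_j\|_{L^{q_2}(\mu_\chi)}\lesssim\|f_j\|_{L^{q_2}_1(\mu_\chi)}$ by Proposition~\ref{prop:kintero}. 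The Gagliardo--Nirenberg interpolation of Proposition~\ref{mean} (case $q=\infty$, $\beta=0$) then converts the $L^{p_2}_{\alpha-1}$ and $L^{q_2}_1$ norms into $\|f_s\|_{L^p_\alpha(\mu_\chi)}^{\theta_s}\|f_s\|_\infty^{1-\theta_s}$ with $\theta$-parameters $(\alpha-1)/\alpha$ and $1/\alpha$ summing to $1$; absorbing the $L^\infty$ factors into $\lesssim_R$ and using the weighted arithmetic--geometric mean inequality collapses the product to $\|f_1\|_{L^p_\alpha(\mu_\chi)}+\|f_2\|_{L^p_\alpha(\mu_\chi)}$.

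The main obstacle is exactly this bookkeeping: one must check that $\partial_jG$ loses precisely one order of admissibility, and then calibrate the exponents in Theorem~\ref{algebraproduct} so that, after Proposition~\ref{mean}, both terms of the product inequality reduce to the \emph{single} scale $L^p_\alpha(\mu_\chi)$ — which is what forces the two interpolation parameters to be complementary and the exponents $p_2,q_2$ to be conjugate above. The boundary value $\alpha=1$, where $p_2=\infty$ and the interpolation degenerates, I would treat directly: there $\|X_iG(f_1,f_2)\|_{L^p(\mu_\chi)}\le\sum_j\|(\partial_jG)(f_1,f_2)\|_\infty\|X_if_j\|_{L^p(\mu_\chi)}\lesssim_R\|f_1\|_{L^p_1(\mu_\chi)}+\|f_2\|_{L^p_1(\mu_\chi)}$, and integer values $\alpha\ge2$ need no special care since the interpolation parameters remain strictly inside $(0,1)$.
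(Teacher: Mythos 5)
Your proposal is correct and follows essentially the same route as the paper: the base case $\alpha\in[0,1)$ via the square-function characterization of Theorem~\ref{Salphaloc} and the local Lipschitz bound with $G(0,0)=0$, then induction in steps of one using Proposition~\ref{alpha+1}, the chain rule, Theorem~\ref{algebraproduct} with the same (reindexed) exponents, and Proposition~\ref{mean} plus Young's inequality to collapse everything to the single scale $L^p_\alpha(\mu_\chi)$. Your explicit remarks that the induction must be carried over all $p\in(1,\infty)$ simultaneously, that $\partial_j G$ loses exactly one order of admissibility, and that the endpoint $\alpha=1$ needs a separate (elementary) treatment are all accurate and in fact make the bookkeeping more transparent than in the paper's own write-up.
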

\begin{proof}
Let $\alpha \in [0,1)$. Then by Theorem~\ref{Salphaloc}
\begin{align*}
\| G(f_1,f_2)\|_{L^p_\alpha(\mu_\chi)} 
& \approx  \|S^{{\loc}}_{\alpha}G(f_1,f_2)\|_{L^p(\mu_\chi)}+\|G(f_1,f_2)\|_{L^p(\mu_\chi)}\\
& \lesssim  \|S^{{\loc}}_{\alpha}f_1\|_{L^p(\mu_\chi) }+ \|S^{{\loc}}_{\alpha}f_2\|_{L^p(\mu_\chi)} + \| f_1\|_{L^p(\mu_\chi)} + \| f_2\|_{L^p(\mu_\chi)}\\
& \approx \| f_1\|_{L^p_\alpha(\mu_\chi)} + \| f_2\|_{L^p_\alpha(\mu_\chi)}
\end{align*}
since $G$ is locally Lipschitz, $G(0,0)=0$ and by the definition of $S^\loc_{\alpha}$. It then remains to prove that, if the statement holds for $\alpha$, then it holds for $\alpha +1$.

Assume, then, that $G$ is $(\alpha+1)$-admissible at 0 and assume that the statement holds for $\alpha$. By Proposition~\ref{alpha+1}, 
\begin{align*}
\| G(f_1,f_2)\|_{L^p_{\alpha+1}(\mu_\chi)} 
&\approx \| G(f_1,f_2)\|_{L^p_\alpha(\mu_\chi)} + \sum_{i=1}^\ell \| X_i G(f_1,f_2)\|_{L^p_\alpha(\mu_\chi)}  \\
& \lesssim_R \| f_1\|_{L^p_\alpha(\mu_\chi)} + \| f_2\|_{L^p_\alpha(\mu_\chi)} +  \sum_{i=1}^\ell \| X_i G(f_1,f_2)\|_{L^p_\alpha(\mu_\chi)}.
\end{align*}
Since for every vector field $X_i$
\[
X_i G(f_1,f_2) = (\partial_1 G)(f_1,f_2) X_i f_1 + (\partial_2 G)(f_1,f_2) X_i f_2
\]
it will be enough to prove that 
\begin{equation}\label{XG}
\| (\partial_j G)(f_1,f_2) X_i f_j\|_{L^p_\alpha(\mu_\chi)} \lesssim_R  \| f_1\|_{L^p_{\alpha+1}(\mu_\chi)} +  \| f_2\|_{L^p_{\alpha+1}(\mu_\chi)}
\end{equation}
when $j=1,2$. We shall prove it when $j=1$, for the case $j=2$ is analogous.

Define $r$ and $q$ such that 
\[
\frac{1}{r}= \frac{\alpha}{(\alpha+1)p}, \qquad \frac{1}{q} = \frac{1}{(\alpha +1)p}
\]
and apply Theorem~\ref{algebraproduct} with $q_1=r$, $p_2=p$, $q_2=\infty$, $p_1=q$, so that
\begin{align*}
&\| (\partial_1 G)(f_1,f_2) X_i f_1\|_{L^p_\alpha(\mu_\chi)} \\
&\qquad\lesssim \|(\partial_1 G)(f_1,f_2)\|_{L^r_\alpha(\mu_\chi)} \| X_i f_1\|_{L^q(\mu_\chi)} + \| (\partial_1 G)(f_1,f_2)\|_\infty \| X_i f_1\|_{L^p_\alpha(\mu_\chi)}\\
& \qquad \lesssim (\| f_1\|_{L^r_\alpha(\mu_\chi)}+\| f_2\|_{L^r_\alpha(\mu_\chi)})  \| f_1\|_{L^q_1(\mu_\chi)} + c(R)\| f_1\|_{L^p_{\alpha+1}(\mu_\chi)},
\end{align*}
the second line for the inductive assumption (observe that $\partial_1 G$ is $\alpha$-admissible at 0), by the continuity of $\partial_1 G$ on the compact $[0,R]\times [0,R]$ and Proposition~\ref{alpha+1}. Observe now that~\eqref{XG} will be proved if 
\begin{equation}\label{XGbis}
(\| f_1\|_{L^r_\alpha(\mu_\chi)}+\| f_2\|_{L^r_\alpha(\mu_\chi)}) \| f_1\|_{L^q_1(\mu_\chi)}   \lesssim_R  \| f_1\|_{L^p_{\alpha+1}(\mu_\chi)} +  \| f_2\|_{L^p_{\alpha+1}(\mu_\chi)},
\end{equation}
and this is what we shall do in the remaining part of the proof. 

Apply Proposition~\ref{mean} with $\varepsilon= \alpha+1$, $\beta = 0$, $\alpha=\alpha$ and $\theta = \alpha/(\alpha+1)$. Then for $j=1,2$
\[
\| f_j\|_{L^r_\alpha(\mu_\chi)} \lesssim \| f_j\|^{\alpha/ (\alpha+1)}_{L^p_{\alpha+1}(\mu_\chi)} \|f_j\|_\infty ^{1/(\alpha+1)} \lesssim_R \| f_j\|^{\alpha/ (\alpha+1)}_{L^p_{\alpha+1}(\mu_\chi)} ,
\]
while the same proposition with $\varepsilon= \alpha+1$, $\beta = 0$, $\alpha=1$ and $\theta = 1/(\alpha+1)$ gives 
\[
\| f_1\|_{L^q_1(\mu_\chi)} \lesssim \| f_1\|^{1/ (\alpha+1)}_{L^p_{\alpha+1}(\mu_\chi)}  \|f_1\|_\infty ^{\alpha/(\alpha+1)} \lesssim_R  \| f_1\|^{1/ (\alpha+1)}_{L^p_{\alpha+1}(\mu_\chi)} .
\]
Therefore for every $j=1,2$
\[
\| f_j\|_{L^r_\alpha(\mu_\chi)} \| f_1\|_{L^q_1(\mu_\chi)}  \lesssim_R \| f_j\|^{\alpha/ (\alpha+1)}_{L^p_{\alpha+1}(\mu_\chi)} \| f_1\|^{1/ (\alpha+1)}_{L^p_{\alpha+1}(\mu_\chi)}  \lesssim_R \| f_j\|_{L^p_{\alpha+1}(\mu_\chi)} +  \| f_1\|_{L^p_{\alpha+1}(\mu_\chi)},
\]
and this proves~\eqref{XGbis}, thus~\eqref{XG}, thus the statement.
\end{proof}

\begin{lemma}\label{lem:Flip}
Let $\alpha\geq 0$ and let $F\colon \R \to \R$ be smooth and $(\alpha+1)$-admissible at 0. Then for every $R>0$ there exists $c(R)$ such that 
\[
\|F(u)-F(v)\|_{Y^p_\alpha(\mu_\chi)} \leq c(R) \, \|u-v\|_{Y^p_\alpha(\mu_\chi)}
\]
for every $u,v\in Y^p_\alpha(\mu_\chi)$ such that $\|u\|_{Y^p_\alpha(\mu_\chi)}\leq R$, $\|v\|_{Y^p_\alpha(\mu_\chi)}\leq R$.
\end{lemma}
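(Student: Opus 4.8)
The plan is to reduce everything to the two composition/product estimates already at our disposal, namely Lemma~\ref{lemmaG} and Theorem~\ref{algebraproduct}. The starting point is the factorisation coming from the fundamental theorem of calculus,
\[
F(u)-F(v) = (u-v)\, H(u,v), \qquad H(u,v) \coloneqq \int_0^1 F'\big(v + t(u-v)\big)\, \dd t,
\]
which rewrites the difference $F(u)-F(v)$ as the product of the difference $u-v$ with the auxiliary function $H(u,v)$. The $L^\infty$-component of the $Y^p_\alpha(\mu_\chi)$-norm I would dispose of first and directly: since $F$ is smooth it is Lipschitz on $[-R,R]$, and because $\|u\|_\infty,\|v\|_\infty\le R$ this gives $\|F(u)-F(v)\|_\infty \le c(R)\|u-v\|_\infty \le c(R)\|u-v\|_{Y^p_\alpha(\mu_\chi)}$.

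The substance is the $L^p_\alpha(\mu_\chi)$-component, and the first step there is to control $H(u,v)$. Setting $\widetilde G(s_1,s_2)\coloneqq \int_0^1 F'(s_2+t(s_1-s_2))\,\dd t$, so that $H(u,v)=\widetilde G(u,v)$, one checks that $\widetilde G$ is a smooth function of two real variables (differentiation under the integral sign is legitimate on the compact interval $[0,1]$). The key observation is that $\widetilde G$ is \emph{$\alpha$-admissible at $0$}: a direct computation gives
\[
\partial_1^{h_1}\partial_2^{h_2}\widetilde G(0,0) = F^{(1+h_1+h_2)}(0)\int_0^1 t^{h_1}(1-t)^{h_2}\,\dd t,
\]
and since $F$ is $(\alpha+1)$-admissible and $[\alpha+1]=[\alpha]+1$, the factor $F^{(1+h_1+h_2)}(0)$ vanishes precisely when $1+h_1+h_2\le[\alpha]+1$, i.e.\ exactly when $h_1+h_2\le[\alpha]$. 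With this, Lemma~\ref{lemmaG} applies to $f_1=u$, $f_2=v$ (which lie in $L^p_\alpha(\mu_\chi)\cap L^\infty$ with sup-norm $\le R$) and yields $\|H(u,v)\|_{L^p_\alpha(\mu_\chi)} \lesssim_R \|u\|_{L^p_\alpha(\mu_\chi)}+\|v\|_{L^p_\alpha(\mu_\chi)} \le 2R$, while $\|H(u,v)\|_\infty \le \sup_{[-R,R]}|F'| \eqqcolon c(R)$ is bounded for free.

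It then remains only to assemble these bounds via the algebra estimate of Theorem~\ref{algebraproduct}, applied with $p_1=q_2=\infty$ and $p_2=q_1=p$, to the product $(u-v)\,H(u,v)$:
\[
\|F(u)-F(v)\|_{L^p_\alpha(\mu_\chi)} \lesssim \|u-v\|_\infty\, \|H(u,v)\|_{L^p_\alpha(\mu_\chi)} + \|u-v\|_{L^p_\alpha(\mu_\chi)}\, \|H(u,v)\|_\infty \lesssim_R \|u-v\|_{Y^p_\alpha(\mu_\chi)},
\]
using the bounds on $H$ from the previous step together with $\|u-v\|_\infty,\|u-v\|_{L^p_\alpha(\mu_\chi)}\le\|u-v\|_{Y^p_\alpha(\mu_\chi)}$. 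Adding the $L^\infty$ and $L^p_\alpha(\mu_\chi)$ contributions gives the assertion. The only step requiring genuine care is the admissibility bookkeeping for $\widetilde G$: one must verify that passing from the $(\alpha+1)$-admissible $F$ to the integrated derivative $H$ leaves exactly the $\alpha$-admissibility demanded by Lemma~\ref{lemmaG}; the rest is a direct combination of the earlier results.
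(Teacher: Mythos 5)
Your proposal is correct and follows essentially the same route as the paper: the same factorisation $F(u)-F(v)=(u-v)\int_0^1F'(v+t(u-v))\,\dd t$, the same verification that the integrated derivative is $\alpha$-admissible so that Lemma~\ref{lemmaG} applies, and the same application of Theorem~\ref{algebraproduct} with one exponent pair at $\infty$. The only difference is that you spell out the admissibility bookkeeping explicitly, which the paper leaves as an observation.
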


\begin{proof} Since $F$ is locally Lipschitz we obviously have 
\[
\|F(u)-F(v)\|_{\infty}\leq \|u-v\|_{\infty}\leq \|u-v\|_{Y^p_\alpha(\mu_\chi)}.
\]
To prove the estimate on the $L^p_\alpha(\mu_\chi)$ norm, instead, write 
\begin{equation}\label{FG}
F(x)-F(y)=(y-x)G(x,y), \qquad G(x,y)=\int_0^1F'(x+\tau(y-x))\, \dd \tau.
\end{equation} 
Observe that by the assumption on $F$, $G$ is $\alpha$-admissible at 0. Hence $\|G(u,v)\|_{L^p_\alpha(\mu_\chi)}\leq c_R$ by Lemma~\ref{lemmaG}. Moreover, $G(u,v)$ is bounded since $G$ is continuous and $u,v$ are bounded. Therefore, by Theorem~\ref{algebraproduct} applied to~\eqref{FG}
\begin{align*}
\|F(u)-F(v)\|_{L^p_\alpha(\mu_\chi)}
&\lesssim \|u-v\|_{\infty}\|G(u,v)\|_{L^p_\alpha(\mu_\chi)}+\|u-v\|_{L^p_\alpha(\mu_\chi)}\|G(u,v)\|_{\infty}\\
& \lesssim_R \|u-v\|_{Y^p_\alpha(\mu_\chi)}.
\end{align*}
This completes the proof.
 \end{proof}
\subsection{The nonlinear heat equation associated with $\Delta_\chi$}
Consider the Cauchy problem
\begin{equation}\label{heat}
\begin{cases}
&\hspace{-0.3cm}\partial_t u+\Delta_\chi u=F(u)\\
& \hspace{-0.3cm}u(0,\,  \cdot\,)=u_0.
\end{cases}
\end{equation}
\begin{theorem}\label{nonlinearheat}
Let $\chi$ be a positive character of $G$, $\alpha\geq 0$ and let $F\colon \R \to \R$ be $(\alpha+1)$-admissible at 0. Let $\tau>0 $, $I=[0,\tau]$ and $R>0$ be such that $c(R)\leq 1/(2\tau)$, where $c(R)$ is that of Lemma~\ref{lem:Flip}. Then, for every $u_0\in Y^p_\alpha(\mu_\chi)$ such that $\|u_0\|_{Y^p_\alpha(\mu_\chi)}\leq R/2$, there exists a unique solution $u\in C^0_I  Y^p_\alpha(\mu_\chi)$ of the Cauchy problem~\eqref{heat} such that 
\[
\|u\|_{C^0_I Y^p_\alpha(\mu_\chi)}\lesssim \|u_0\|_{ Y^p_\alpha(\mu_\chi)}.
\]
\end{theorem}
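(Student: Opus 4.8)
The plan is to recast the Cauchy problem~\eqref{heat} in its mild (Duhamel) formulation
\[
u(t) = e^{-t\Delta_\chi}u_0 + \int_0^t e^{-(t-s)\Delta_\chi} F(u(s))\, \dd s =: \Phi(u)(t),
\]
and to solve it by a Banach contraction argument for $\Phi$ on a closed ball of $C^0_I Y^p_\alpha(\mu_\chi)$, equipped with the norm $\|u\|_{C^0_I Y^p_\alpha(\mu_\chi)} = \sup_{t\in I}\|u(t)\|_{Y^p_\alpha(\mu_\chi)}$. Before estimating $\Phi$ I would record the one structural fact that makes everything work: $e^{-t\Delta_\chi}$ is a contraction on $Y^p_\alpha(\mu_\chi)$, uniformly in $t\ge 0$. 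Indeed it is a contraction on $L^\infty$ because $p_t^\chi$ is a probability density (Lemma~\ref{estimates_ptgamma}), and it is a contraction on $L^p_\alpha(\mu_\chi)$ because it commutes with $\Delta_\chi^{\alpha/2}$ and is a contraction on $L^p(\mu_\chi)$, whence $\|\Delta_\chi^{\alpha/2} e^{-t\Delta_\chi} f\|_{L^p(\mu_\chi)} = \|e^{-t\Delta_\chi}\Delta_\chi^{\alpha/2}f\|_{L^p(\mu_\chi)} \le \|\Delta_\chi^{\alpha/2}f\|_{L^p(\mu_\chi)}$.

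Next I would run the fixed-point estimates on the ball $\mathcal{K}_R := \{u \in C^0_I Y^p_\alpha(\mu_\chi) : \|u\|_{C^0_I Y^p_\alpha(\mu_\chi)} \le R\}$. Since $F$ is $(\alpha+1)$-admissible at $0$ we have $F(0)=0$, so Lemma~\ref{lem:Flip} with $v=0$ gives $\|F(w)\|_{Y^p_\alpha(\mu_\chi)} \le c(R)\|w\|_{Y^p_\alpha(\mu_\chi)}$ whenever $\|w\|_{Y^p_\alpha(\mu_\chi)}\le R$. Combining this with the contractivity of the semigroup and $\|u_0\|_{Y^p_\alpha(\mu_\chi)}\le R/2$, for $u\in \mathcal{K}_R$ I obtain
\[
\|\Phi(u)(t)\|_{Y^p_\alpha(\mu_\chi)} \le \|u_0\|_{Y^p_\alpha(\mu_\chi)} + \int_0^t \|F(u(s))\|_{Y^p_\alpha(\mu_\chi)}\,\dd s \le \frac{R}{2} + \tau\, c(R)\, R \le R,
\]
using $c(R)\le 1/(2\tau)$, so that $\Phi$ maps $\mathcal{K}_R$ into itself. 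The same two ingredients, now applied to $F(u(s))-F(v(s))$ through Lemma~\ref{lem:Flip}, yield
\[
\|\Phi(u)-\Phi(v)\|_{C^0_I Y^p_\alpha(\mu_\chi)} \le \tau\, c(R)\,\|u-v\|_{C^0_I Y^p_\alpha(\mu_\chi)} \le \tfrac12 \|u-v\|_{C^0_I Y^p_\alpha(\mu_\chi)},
\]
so $\Phi$ is a contraction. Banach's fixed point theorem then produces a unique fixed point $u\in\mathcal{K}_R$, i.e.\ a mild solution; feeding the self-map estimate back into $u=\Phi(u)$ gives $\|u\|_{C^0_I Y^p_\alpha(\mu_\chi)} \le \|u_0\|_{Y^p_\alpha(\mu_\chi)} + \tfrac12\|u\|_{C^0_I Y^p_\alpha(\mu_\chi)}$, hence the claimed bound $\|u\|_{C^0_I Y^p_\alpha(\mu_\chi)}\le 2\|u_0\|_{Y^p_\alpha(\mu_\chi)}$.

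The step I expect to be the main obstacle is verifying that $\Phi(u)$ genuinely lands in $C^0_I Y^p_\alpha(\mu_\chi)$, that is, continuity of $t\mapsto \Phi(u)(t)$ in the $Y^p_\alpha(\mu_\chi)$-norm. Continuity of both terms in the $L^p_\alpha(\mu_\chi)$-topology is standard, following from strong continuity of $e^{-t\Delta_\chi}$ on $L^p(\mu_\chi)$ and its commutation with $\Delta_\chi^{\alpha/2}$, together with continuity of $s\mapsto F(u(s))$ provided by Lemma~\ref{lem:Flip}. The delicate point is the $L^\infty$ component, on which a diffusion semigroup is in general \emph{not} strongly continuous. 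I would circumvent this by first obtaining the fixed point in the larger complete space of bounded measurable maps $I\to Y^p_\alpha(\mu_\chi)$, where the contraction argument above goes through verbatim, and then upgrading to continuity in time by exploiting the regularizing effect of the Duhamel integral and the uniform bounds already in hand; uniqueness in $C^0_I Y^p_\alpha(\mu_\chi)$ is then inherited from uniqueness in the larger space. This is precisely the point at which the scheme of~\cite[Section 6]{BBR} would be imitated.
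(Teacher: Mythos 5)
Your proposal is correct and follows essentially the same route as the paper: the paper also works with the Duhamel formulation, proves the semigroup contractivity on $Y^p_\alpha(\mu_\chi)$, the bound $\|D\|_{C^0_I Y^p_\alpha(\mu_\chi)\to C^0_I Y^p_\alpha(\mu_\chi)}\leq |I|$ and the Lipschitz estimate from Lemma~\ref{lem:Flip}, and then simply invokes the abstract iteration argument of~\cite[Proposition 1.38]{Tao}, which is exactly the Banach fixed-point scheme you write out explicitly. Your closing remark about continuity of the $L^\infty$ component in time is a legitimate subtlety that the paper leaves implicit in the citation of Tao.
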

\begin{proof}
By the Duhamel formula, a solution $u$ to \eqref{heat} is given by 
\[
u(t, x)=e^{-t\Delta_\chi}u_0(x)+\int_0^{t} e^{-(t-s)\Delta_\chi}F(u(s,x))\, \dd s. 
\]
Consider the linear operator $D\colon C^0_I Y^p_\alpha(\mu_\chi)\rightarrow C^0_I Y^p_\alpha(\mu_\chi)$ given by 
\[
D f(t,x)=\int_0^{t} e^{-(t-s)\Delta_\chi} f(s,x)\,   \dd s.
\]
Using the properties of the semigroup $e^{-t\Delta_\chi}$, we have that 
\begin{align*} 
\|Df(t, \cdot )\|_{L^p_\alpha(\mu_\chi)} 
& = \| (I+\Delta_\chi)^{\alpha/2} Df(t,\cdot )\|_{L^p(\mu_\chi)} 
\\& \leq  \int_0^t \| (I+\Delta_\chi)^{\alpha/2} e^{-(t-s) \Delta_\chi} f(s, \cdot)  \|_{L^p(\mu_\chi)}\, \dd s
\\& \leq t \sup_{s\in I} \|(I+\Delta_\chi)^{\alpha/2} f(s, \cdot )\|_{L^p(\mu_\chi)} 
\\& \leq |I| \, \|f\|_{C^0_I L^p_\alpha(\mu_\chi)},
\end{align*}
and 
\begin{align*} 
\|Df(t, \cdot )\|_{\infty} 
& \leq  \int_0^t \| e^{-(t-s) \Delta_\chi} f(s, \cdot)  \|_{\infty}\, \dd s
\\& \leq |I| \, \|f\|_{C^0_I L^\infty}.
\end{align*}
so that $\| D\|_{C^0_I Y^p_\alpha(\mu_\chi)\to C^0_I Y^p_\alpha(\mu_\chi)}\leq |I|$. Moreover, by Lemma~\ref{lem:Flip}
\[
\|F(u)-F(v)\|_{C^0_IY^p_\alpha(\mu_\chi)} \leq c(R) \, \|u-v\|_{C^0_IY^p_\alpha(\mu_\chi)} \leq \frac{1}{2|I|} \|u-v\|_{C^0_IY^p_\alpha(\mu_\chi)}
\]
for every $u,v$ such that $\|u\|_{C^0_IY^p_\alpha(\mu_\chi)}, \|v\|_{C^0_IY^p_\alpha(\mu_\chi)} \leq R$. Finally, by~\eqref{equivNorms}, since the semigroup $e^{-t\Delta_\chi}$ commutes with any power of $I+\Delta_\chi$ and by its contractivity on $L^q(\mu_\chi)$ for every $q\in [1,\infty]$, we have 
\[
\|e^{-t\Delta_\chi} u_0\|_{Y^p_\alpha(\mu_\chi)}\leq \|u_0\|_{Y^p_\alpha(\mu_\chi)}\leq R/2.
\]
The abstract iteration argument~\cite[Proposition 1.38]{Tao} gives the result. 
\end{proof}
\subsection{The nonlinear Schr\"odinger equation associated with $\mathcal{L}$}
We shall consider the nonlinear Cauchy problem
\begin{equation}\label{Schr}
\begin{cases}
         &\hspace{-0.3cm} i\partial_t u + \mathcal{L} u = F(u)  \\
          &\hspace{-0.3cm} u(0,\, \cdot \, )=u_0.
\end{cases}
\end{equation}
To study the local well-posedness of~\eqref{Schr} we shall need the embedding $L^p_\alpha(\lambda)\hookrightarrow L^\infty$, and this is the reason for restricting to $\mu_\chi= \lambda$ (hence $\Delta_\chi=\mathcal{L}$). The choice of $p=2$ is, instead, due to the fact that $e^{it\mathcal{L}}$ is bounded on $L^2(\lambda)$, but in general is not bounded on $L^p(\lambda)$ if $p\neq 2$.
\begin{theorem}
Let $\chi$ be a positive character of $G$, $\alpha>d/2$, $u_0\in L^2_\alpha(\lambda)$ and let $F\colon \R \to \R$ be $(\alpha+1)$-admissible at 0. Let $\tau>0 $, $I=[0,\tau]$ and $R>0$ be such that $c(R)\leq 1/(2\tau)$, where $c(R)$ is that of Lemma~\ref{lem:Flip}. Then, for every $u_0\in L^{2}_\alpha(\lambda)$ such that $\|u_0\|_{L^2_\alpha(\lambda)}\leq R/2$, there exists a unique solution $u\in C^0_I  L^2_\alpha(\lambda)$ of the Cauchy problem~\eqref{Schr} such that 
\[
\|u\|_{C^0_I L^2_\alpha(\lambda)}\lesssim \|u_0\|_{ L^2_\alpha(\lambda)}.
\]
\end{theorem}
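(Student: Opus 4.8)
The plan is to follow closely the scheme of Theorem~\ref{nonlinearheat}, replacing the heat propagator $e^{-t\Delta_\chi}$ by the Schr\"odinger group $e^{it\mathcal{L}}$ and running a contraction-mapping argument in $E\coloneqq C^0_I L^2_\alpha(\lambda)$. The structural difference is that $e^{it\mathcal{L}}$ is a \emph{unitary} group on $L^2(\lambda)$ --- since $\mathcal{L}=\Delta_\delta$ is self-adjoint on $L^2(\lambda)$ --- but is in general unbounded on $L^p(\lambda)$ for $p\neq 2$ and on $L^\infty$. This forces the choice $p=2$, and prevents us from controlling the $L^\infty$ norm through the propagator, as was done for the heat equation. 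Instead, the hypotheses $\alpha>d/2=d/p$ and $\mu_\chi=\lambda$ let us invoke the embedding $L^2_\alpha(\lambda)\hookrightarrow L^\infty$ of Theorem~\ref{SET}~(c); consequently $L^2_\alpha(\lambda)=Y^2_\alpha(\lambda)$ with equivalent norms, and the whole iteration can be carried out in the single space $E$.

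First I would record the mapping properties of the propagator. Since $e^{it\mathcal{L}}$ commutes with $\mathcal{L}^{\alpha/2}$ and is unitary on $L^2(\lambda)$, it acts isometrically on $L^2_\alpha(\lambda)$, and it is strongly continuous there (apply strong continuity on $L^2(\lambda)$ to $f$ and to $\mathcal{L}^{\alpha/2}f$). Then, setting $Df(t)=\int_0^t e^{i(t-s)\mathcal{L}}f(s)\,\dd s$, Minkowski's integral inequality yields
\[
\| Df(t)\|_{L^2_\alpha(\lambda)} \leq \int_0^t \| e^{i(t-s)\mathcal{L}} f(s)\|_{L^2_\alpha(\lambda)}\,\dd s = \int_0^t \| f(s)\|_{L^2_\alpha(\lambda)}\,\dd s \leq |I|\,\|f\|_{E},
\]
so $\|D\|_{E\to E}\leq |I|$. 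Note that, in contrast with Theorem~\ref{nonlinearheat}, no separate $L^\infty$ estimate on $D$ is available nor needed: the $L^\infty$ norm is already subsumed in the $L^2_\alpha(\lambda)$ norm via the embedding.

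Next I would treat the nonlinearity. Writing the solution through the Duhamel formula
\[
u(t)=e^{it\mathcal{L}}u_0 - i\,D\big(F(u)\big)(t)\eqqcolon \Phi u(t),
\]
I would use Lemma~\ref{lem:Flip} together with the identification $L^2_\alpha(\lambda)=Y^2_\alpha(\lambda)$ to obtain, for $u,v$ in the ball of radius $R$ of $E$,
\[
\| F(u(t))-F(v(t))\|_{L^2_\alpha(\lambda)}\lesssim c(R)\,\| u(t)-v(t)\|_{L^2_\alpha(\lambda)},
\]
and, since $F(0)=0$ ($F$ being $(\alpha+1)$-admissible at $0$), the bound $\|F(u(t))\|_{L^2_\alpha(\lambda)}\lesssim c(R)\|u(t)\|_{L^2_\alpha(\lambda)}$. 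Continuity in $t$ of $t\mapsto F(u(t))$ into $L^2_\alpha(\lambda)$ follows from this Lipschitz estimate and the continuity of $u$, so that $\Phi$ indeed maps $E$ into $E$.

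Combining these facts, $\Phi$ maps the closed ball of radius $R$ of $E$ into itself: the isometry of the propagator gives $\|e^{it\mathcal{L}}u_0\|_{L^2_\alpha(\lambda)}=\|u_0\|_{L^2_\alpha(\lambda)}\leq R/2$, while $\|D(F(u))\|_E\leq |I|\,c(R)\,R\leq R/2$ using $c(R)\leq 1/(2\tau)=1/(2|I|)$; the same estimates give the contraction bound $\|\Phi u-\Phi v\|_E\leq |I|\,c(R)\,\|u-v\|_E\leq \tfrac12\|u-v\|_E$. The abstract iteration argument~\cite[Proposition 1.38]{Tao} (equivalently, the Banach fixed point theorem) then furnishes a unique fixed point $u\in E$, which is the desired solution, and the a priori bound $\|u\|_E\leq 2\|u_0\|_{L^2_\alpha(\lambda)}$ follows by inserting $u=\Phi u$ into the self-map estimate and absorbing $\tfrac12\|u\|_E$. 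The point to watch --- more a matter of book-keeping than a genuine obstacle --- is that the failure of $L^p$- and $L^\infty$-boundedness of $e^{it\mathcal{L}}$ is exactly offset by the embedding $L^2_\alpha(\lambda)\hookrightarrow L^\infty$, so that every application of Lemma~\ref{lem:Flip} carries a harmless norm-equivalence constant between $L^2_\alpha(\lambda)$ and $Y^2_\alpha(\lambda)$, which may be absorbed into $c(R)$ (equivalently, into a slightly smaller $\tau$).
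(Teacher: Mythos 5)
Your proposal is correct and follows essentially the same route as the paper: Duhamel formula, the bound $\|D\|_{E\to E}\leq|I|$ from unitarity of $e^{it\mathcal{L}}$ on $L^2(\lambda)$, the identification $Y^2_\alpha(\lambda)=L^2_\alpha(\lambda)$ via Theorem~\ref{SET}~(c) to apply Lemma~\ref{lem:Flip}, and the abstract iteration argument of~\cite[Proposition 1.38]{Tao}. The extra details you supply (strong continuity of the propagator on $L^2_\alpha(\lambda)$, the explicit self-map and contraction estimates) are consistent with, and merely flesh out, the paper's argument.
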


\begin{proof} By the Duhamel formula, a solution $u$ to \eqref{Schr} is given by 
\[
 u(t,x) = e^{it\mathcal{L}}u_0(x) -i\int_0^t e^{i(t-s) \mathcal{L}} F(u(s,x))\, \dd s.
\]
Consider the linear operator $D\colon C^0_I L^2_\alpha(\lambda)\rightarrow C^0_I L^2_\alpha(\lambda)$ given by 
\[
D f(t,x)= -i\int_0^t e^{i(t-s) \mathcal{L}} f(s,x) \, \dd s.
\]
By the boundedness of the semigroup $e^{it\mathcal{L}}$ on $L^2(\lambda)$, one can prove as before that
\begin{align*} 
\|Df(t, \cdot )\|_{L^2_\alpha(\lambda)} 
& \leq |I| \, \|f\|_{C^0_I L^2_\alpha(\lambda)}
\end{align*}
so that $\| D\|_{C^0_I L^2_\alpha(\lambda)\to C^0_I L^2_\alpha(\lambda)}\leq |I|$. Moreover, by Lemma~\ref{lem:Flip} (observe that $Y^2_\alpha(\lambda) = L^2_\alpha(\lambda)$ by the Sobolev embedding)
\[
\|F(u)-F(v)\|_{C^0_I L^2_\alpha(\lambda)} \leq c(R) \, \|u-v\|_{C^0_IL^2_\alpha(\lambda)} \leq \frac{1}{2|I|} \|u-v\|_{C^0_I L^2_\alpha(\lambda)}
\]
for every $u,v$ such that $\|u\|_{C^0_IL^2_\alpha(\lambda)}, \|v\|_{C^0_IL^2_\alpha(\lambda)} \leq R$. Finally, by~\eqref{equivNorms}, since the semigroup $e^{it\mathcal{L}}$ commutes with any power of $I+\mathcal{L}$ and by its boundedness (it is unitary, indeed) on $L^2(\lambda)$ we have 
\[
\|e^{it\mathcal{L}} u_0\|_{L^2_\alpha(\lambda)}\leq \|u_0\|_{L^2_\alpha(\lambda)}\leq R/2.
\]
The abstract iteration argument~\cite[Proposition 1.38]{Tao} gives the result. 
\end{proof}

\section{Proof of Theorem~\ref{teoLocalRiesz}: boundedness of the local Riesz Transforms} \label{Sec_Proof_Riesz}
This final section is dedicated to the proof of Theorem~\ref{teoLocalRiesz}. We begin with some lemmata which will be of use. The first is rather elementary and its proof is omitted.
\begin{lemma}\label{lemma_convoluzione}
Let $g \in L^1(\mu_\chi) \cap L^1(\rho)$. Then the operator $f \mapsto f * g$ is bounded on $L^p(\mu_\chi)$ for every $1\leq p \leq \infty$, with norm bounded by $\|g\|_{L^1(\mu_\chi)}^{1/p} \|g\|_{L^1(\rho)}^{1/p'}$.
\end{lemma}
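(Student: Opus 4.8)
The plan is to prove the claimed operator bound at the two endpoints $p=1$ and $p=\infty$ and then interpolate. Writing $T_g f \coloneqq f*g$, note that this is a single linear operator, independent of $p$; so once continuity is established at both endpoints, the intermediate cases will follow at once from Riesz--Thorin interpolation on the measure space $(G,\mu_\chi)$, recalling that $L^\infty$ coincides with the space of $\mu_\chi$-essentially bounded functions.

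The endpoint $p=\infty$ is immediate: for $\rho$-a.e.\ $x$,
\[
|T_g f(x)| \le \int_G |f(xy^{-1})|\,|g(y)|\,\dd\rho(y) \le \|f\|_\infty \,\|g\|_{L^1(\rho)},
\]
so that $\|T_g\|_{L^\infty\to L^\infty}\le \|g\|_{L^1(\rho)}$.

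For $p=1$ I would use Tonelli's theorem to write
\[
\|T_g f\|_{L^1(\mu_\chi)} \le \int_G |g(y)| \left(\int_G |f(xy^{-1})|\,\chi(x)\,\dd\rho(x)\right)\dd\rho(y),
\]
and then evaluate the inner integral. Since $\chi$ is a character, $\chi(x)=\chi(xy^{-1})\chi(y)$, so setting $h \coloneqq |f|\chi$ the inner integral equals $\chi(y)\int_G h(xy^{-1})\,\dd\rho(x)$. The right invariance of $\rho$ gives $\int_G h(xy^{-1})\,\dd\rho(x) = \int_G h(x)\,\dd\rho(x) = \|f\|_{L^1(\mu_\chi)}$, whence the inner integral equals $\chi(y)\|f\|_{L^1(\mu_\chi)}$. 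Substituting back yields
\[
\|T_g f\|_{L^1(\mu_\chi)} \le \|f\|_{L^1(\mu_\chi)} \int_G |g(y)|\,\chi(y)\,\dd\rho(y) = \|g\|_{L^1(\mu_\chi)}\,\|f\|_{L^1(\mu_\chi)},
\]
so that $\|T_g\|_{L^1(\mu_\chi)\to L^1(\mu_\chi)}\le \|g\|_{L^1(\mu_\chi)}$.

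Finally, Riesz--Thorin interpolation between these two bounds, with $\tfrac1p=\tfrac{1-\theta}{1}+\tfrac{\theta}{\infty}$, i.e.\ $\theta=1/p'$, produces $\|T_g\|_{L^p(\mu_\chi)\to L^p(\mu_\chi)}\le \|g\|_{L^1(\mu_\chi)}^{1/p}\,\|g\|_{L^1(\rho)}^{1/p'}$, as claimed. The only step requiring any care is the change of variables in the $p=1$ estimate, where the right invariance of the Haar measure $\rho$ and the multiplicativity of the character $\chi$ must be combined to absorb the weight $\chi$; this is precisely the mechanism that splits the final constant into a product of an $L^1(\mu_\chi)$ and an $L^1(\rho)$ factor, and it is the reason the statement mixes the two norms.
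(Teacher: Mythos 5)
Your proof is correct. The paper explicitly omits the proof of this lemma as ``rather elementary,'' so there is no argument to compare against; your write-up supplies exactly the standard reasoning one would expect. The two endpoint bounds are right: the $L^\infty$ bound uses only $\|g\|_{L^1(\rho)}$, and the $L^1(\mu_\chi)$ bound correctly combines the multiplicativity $\chi(x)=\chi(xy^{-1})\chi(y)$ with the right invariance of $\rho$ to absorb the weight and produce $\|g\|_{L^1(\mu_\chi)}$; Riesz--Thorin with $\theta=1/p'$ then gives the stated constant $\|g\|_{L^1(\mu_\chi)}^{1/p}\|g\|_{L^1(\rho)}^{1/p'}$ (the identification of $L^\infty$ with the $\mu_\chi$-essentially bounded functions, which you invoke, is stated in the paper). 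An equivalent route that avoids citing interpolation is to split $|f(xy^{-1})||g(y)| = \bigl(|f(xy^{-1})|^p|g(y)|\chi(y)\bigr)^{1/p}\bigl(|g(y)|\chi(y)^{-1/(p-1)}\cdot\chi(y)^{1/(p-1)}\bigr)^{1/p'}$ and apply H\"older directly, but this is just the constructive shadow of the same interpolation and yields the identical constant.
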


\begin{lemma}\label{lemmaQt}
Let $B=B(c_B,r_B)$ be a ball of radius $r_B\leq 1$, $t\in [r_B^2,1]$ and $y,z\in B$.  Let 
\[Q_t^\chi(x,y,z) = P_t^\chi(x,y)-P_t^\chi(x,z).\]
There exists $\epsilon\in (0,1)$ and $c_1>0$ such that for every $\beta<2c_1$ and $J\in \mathfrak{l}^m$
\[
\int_{2r_B \leq |c_B^{-1}x| \leq 2}|X_{J,x} Q_t^\chi(x,y,z)|^2 e^{\beta |c_B^{-1}x|^2/t} \, \dd \mu_\chi(x) \lesssim \left( \frac{d_C(y,z)}{\sqrt{t}}\right)^{2\epsilon} t^{-m} \chi^{-1}(c_B)\delta(c_B) V(\sqrt{t})^{-1}.
\]
\end{lemma}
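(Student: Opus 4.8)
The plan is to unwind $P_t^\chi$ through~\eqref{Ptgamma_ptgamma} and the left-invariance of $X_J$, reducing everything to weighted Gaussian $L^2$-estimates for the convolution kernel $p_t^\chi$. Writing $\kappa\coloneqq\chi^{-1}\delta$ (a positive character) and recalling $P_t^\chi(x,w)=p_t^\chi(w^{-1}x)\kappa(w)$, left-invariance gives $X_{J,x}P_t^\chi(x,w)=(X_Jp_t^\chi)(w^{-1}x)\,\kappa(w)$, whence
\[
X_{J,x}Q_t^\chi(x,y,z)=\kappa(y)\big[(X_Jp_t^\chi)(y^{-1}x)-(X_Jp_t^\chi)(z^{-1}x)\big]+\big(\kappa(y)-\kappa(z)\big)(X_Jp_t^\chi)(z^{-1}x),
\]
which I split as Term I (the kernel difference) plus Term II (the character difference). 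The engine for both is the single-base-point estimate: for every $w\in B$,
\[
\int_{2r_B\le|c_B^{-1}x|\le2}|(X_Jp_t^\chi)(w^{-1}x)|^2\,e^{\beta|c_B^{-1}x|^2/t}\,\chi(x)\,\dd\rho(x)\lesssim\chi(w)\,\delta(c_B)^{-1}\,t^{-(d+m)}\,V(\sqrt t).
\]
To prove it I would square the bound of Lemma~\ref{estimates_ptgamma}(iii), use $\chi^{-1}(w^{-1}x)\chi(x)=\chi(w)$, and absorb the weight: since $w\in B$ and $|c_B^{-1}x|\ge2r_B$ force $|w^{-1}x|\ge|c_B^{-1}x|-r_B\ge\tfrac12|c_B^{-1}x|$, one gets $-2b|w^{-1}x|^2/t+\beta|c_B^{-1}x|^2/t\le(\beta-\tfrac b2)|c_B^{-1}x|^2/t$, a genuine Gaussian precisely when $\beta<b/2$; this is the origin of $c_1$. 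The substitution $x=c_Bx'$ together with the modularity of $\rho$ produces a factor $\delta(c_B)^{-1}$, and $\int_{|x'|\le2}e^{-c|x'|^2/t}\dd\rho(x')\lesssim V(\sqrt t)$ by~\eqref{pallepiccole}, which closes the estimate. Via $t^{-(d+m)}V(\sqrt t)\approx t^{-m}V(\sqrt t)^{-1}$ and $e^{2\omega t}\lesssim1$ for $t\le1$, summing the two base points $w=y,z$ and using $\kappa(w)^2\chi(w)\delta(c_B)^{-1}\approx\chi^{-1}(c_B)\delta(c_B)$ already yields the $\epsilon=0$ form of the Lemma.

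For Term II I would use that $\kappa$ is smooth with $\kappa(e)=1$ and $X_i\kappa=c_i\kappa$ by~\eqref{Xdelta}, so integrating along a sub-unit path and applying~\eqref{localdoubmugamma} gives $|\kappa(y)-\kappa(z)|=\kappa(z)\,|\kappa(z^{-1}y)-1|\lesssim\kappa(c_B)\,d_C(y,z)$. Feeding this and the engine (with $w=z$, without the $\kappa$ factor) into the integral produces $\chi^{-1}(c_B)\delta(c_B)\,t^{-m}V(\sqrt t)^{-1}\,d_C(y,z)^2$; since $t\le1$ we have $d_C(y,z)^2\le(d_C(y,z)/\sqrt t)^2$, so Term II is controlled with the sharp exponent $2$, a fortiori by $(d_C(y,z)/\sqrt t)^{2\epsilon}$ for any $\epsilon\in(0,1)$, because $d_C(y,z)/\sqrt t\le2r_B/\sqrt t\le2$.

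The crux is Term I, and the main obstacle is a Hölder-regularity estimate for the derivatives of the heat kernel in the Carnot--Carathéodory metric. The plan is to prove, for some $\epsilon\in(0,1)$,
\[
|(X_Jp_t^\chi)(y^{-1}x)-(X_Jp_t^\chi)(z^{-1}x)|\lesssim\Big(\tfrac{d_C(y,z)}{\sqrt t}\Big)^{\epsilon}\,t^{-(d+m)/2}\,e^{\omega t}\sup_{w\in B}\chi^{-1/2}(w^{-1}x)\,e^{-b|w^{-1}x|^2/t},
\]
after which integrating against the weight $e^{\beta|c_B^{-1}x|^2/t}$ exactly as in the engine and multiplying by $\kappa(y)^2$ yields the claimed bound with factor $(d_C(y,z)/\sqrt t)^{2\epsilon}$. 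I would obtain the difference estimate by writing $(X_Jp_t^\chi)(y^{-1}x)-(X_Jp_t^\chi)(z^{-1}x)=\int_0^1\frac{\dd}{\dd s}(X_Jp_t^\chi)(c(s)^{-1}x)\,\dd s$ along a sub-unit horizontal curve $c$ of length $\approx d_C(y,z)$ joining $z$ to $y$; the $s$-derivative brings down one extra left-invariant derivative of $p_t^\chi$ (of order $m+1$, still covered by Lemma~\ref{estimates_ptgamma}(iii)) together with a factor $\lesssim d_C(y,z)$, and one interpolates this against the uniform ($\epsilon=0$) engine bound to fix an admissible $\epsilon$. The two delicate points — and the reason one only claims $\epsilon<1$ rather than full Lipschitz regularity — are that converting the base-point motion into left-invariant derivatives of $p_t^\chi$ introduces, through $\mathrm{Ad}(c(s)^{-1})$, components along non-horizontal directions whose natural parabolic scaling is weaker, and that near the inner boundary $|c_B^{-1}x|\approx2r_B$ the shifted Gaussian degrades; the latter is harmless since that region has measure $\approx V(\sqrt t)$ and the weight is bounded there, while the former forces the interpolation step (with $c_1$ chosen small enough for the surviving Gaussian) that secures the Hölder exponent $\epsilon\in(0,1)$.
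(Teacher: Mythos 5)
Your overall architecture is sound: the reduction via \eqref{Ptgamma_ptgamma} and left-invariance, the weighted Gaussian $L^2$ ``engine'' (with $c_1$ coming from $\beta<b/2$, the factor $\delta(c_B)^{-1}$ from the change of variables, and the bookkeeping $\kappa(y)^2\chi(y)\delta(c_B)^{-1}\approx\chi^{-1}(c_B)\delta(c_B)$), and the treatment of the character-difference term are all correct. The gap is in Term I, which is the heart of the lemma. When you differentiate $w\mapsto (X_Jp_t^\chi)(w^{-1}x)$ along a horizontal curve $c(s)$ from $z$ to $y$, the derivative $\tfrac{\dd}{\dd s}(X_Jp_t^\chi)(c(s)^{-1}x)$ is a \emph{right}-invariant horizontal derivative of $X_Jp_t^\chi$; rewritten through $\mathrm{Ad}(c(s)^{-1})$ in terms of left-invariant fields it contains commutators of length up to the step $S$ of the H\"ormander system, each costing $t^{-\ell/2}$ by Lemma~\ref{estimates_ptgamma}~(iii). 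Your bound (b) is therefore of size $d_C(y,z)\,t^{-S/2}M$ (with $M$ the Gaussian majorant), not $d_C(y,z)\,t^{-1/2}M$, and the proposed interpolation against the sup bound $M$ fails: the inequality $\min\bigl(1,\,d\,t^{-S/2}\bigr)\leq C\,(d/\sqrt t)^{\epsilon}$ is false uniformly in $t$ for every fixed $\epsilon>0$ (take $r_B=\sqrt t$ and $d=t^{S/2}\leq 2r_B$: the left side is $1$ while the right side is $Ct^{\epsilon(S-1)/2}\to 0$). Nor can the leftover factor $t^{-\theta(S-1)/2}$ be absorbed downstream, since it would make the analogue of $\mathsf{I}_2$, namely $\int_{r_B^2}^1 t^{-1}(r_B/\sqrt t)^{\epsilon}\,t^{-\theta(S-1)/2}\,\dd t\approx r_B^{-\theta(S-1)}$, blow up as $r_B\to 0$. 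So the H\"older estimate for Term I is asserted rather than proved, and it is precisely the nontrivial content of the lemma.

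The paper obtains that estimate from parabolic regularity theory rather than from a path integral: since the kernel is symmetric, $(\tau,w)\mapsto X_{J,x}P_\tau^\chi(x,w)$ solves $\partial_\tau u+\Delta_\chi u=0$ in $(\tau,w)$, and the parabolic H\"older-continuity consequence of the local Harnack inequality (\cite[Proposition~3.2]{SaloffCoste}, applicable by \cite[eq.~(2.4)]{Varopoulos1}) yields directly
\[
|X_{J,x}Q_t^\chi(x,y,z)|\lesssim \Bigl(\tfrac{d_C(y,z)}{\sqrt t}\Bigr)^{\epsilon}\sup_{(\tau,w)\in Q}X_{J,x}P_\tau^\chi(x,w),\qquad Q=\bigl(\tfrac49 t,\tfrac{20}{9}t\bigr)\times B\bigl(y,\tfrac43\sqrt t\bigr),
\]
after which the Gaussian bounds close the argument essentially as in your engine, without ever splitting off the character difference. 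If you want to keep a path argument, you would first need to use the symmetry $P_t^\chi(x,w)=P_t^\chi(w,x)$ to place $w$ on the \emph{right} of the argument of $p_t^\chi$, so that horizontal derivatives in $w$ become genuine left-invariant derivatives with the correct $t^{-1/2}$ scaling; but that requires mixed estimates $X_{I,y}X_{J,x}P_t^\chi$ which are not furnished by Lemma~\ref{estimates_ptgamma}. Either way, a source for the exponent $\epsilon$ must be supplied.
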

\begin{proof}
For every $x\in G$, the function $(t,y) \xmapsto{u} P_t^\chi(x,y)$ is a solution of the heat equation $\partial_t u +\Delta_\chi u=0$.  Therefore, also $(t,y)\xmapsto{u_J} X_{J,x} P_t^\chi (x, y)$ is a solution of $\partial_t u +\Delta_\chi u=0$. Thus, by~\cite[Proposition 3.2]{SaloffCoste} (it can be applied by~\cite[eq.\ (2.4)]{Varopoulos1})
\begin{align*}
|X_{J,x} Q_t^\chi(x,y,z)| \lesssim \left( \frac{d_C(y,z)}{\sqrt{t}}\right)^{\epsilon} \sup_{(\tau, w)\in Q} X_{J,x} P_\tau^\chi (x,w)
\end{align*}
for some $\epsilon >0$, where $Q=\left(\frac{4}{9} t,\frac{20}{9} t\right )\times B\left(y,\frac{4}{3}\sqrt t\right)$. By means of~\eqref{Ptgamma_ptgamma},~\eqref{Xdelta}, Lemma~\ref{estimates_ptgamma}~(iii) and the assumptions on $r_B, t, y, z$ we get
\begin{align*}
X_{J,x} P_\tau^\chi(x,w)
& \lesssim \chi^{-1}(w)\delta(w) \chi^{-1/2}(w^{-1}x) \tau^{-(d+m)/2} e^{-b|w^{-1}x|^2/\tau}
\end{align*}
so that (observe that $\tau \approx t$, $t^{-d/2}\approx V(\sqrt{t})^{-1}$ and $\delta(w)\approx \delta(c_B)$, $\chi(w)\approx \chi(c_B)$)
\begin{align}\label{XJqPuntuale}
|X_{J,x} Q_t^\chi(x,y,z)|  \lesssim \left( \frac{d_C(y,z)}{\sqrt{t}}\right)^{\epsilon} \chi^{-1/2}(c_B)\delta(c_B)\chi^{-1/2}(x) t^{-m/2}V(\sqrt{t})^{-1} e^{-c_1 |c_B^{-1}x|^2/t}
\end{align}
for some $c_1>0$. Therefore
\begin{align*}
&\int_{2r_B \leq |c_B^{-1}x| \leq 2}|  X_{J,x} Q_t^\chi(x,y,z)|^2 e^{\beta |c_B^{-1}x|^2/t} \, \dd \mu_\chi(x)  \\ 
&\qquad \qquad \lesssim \left( \frac{d_C(y,z)}{\sqrt{t}}\right)^{2\epsilon} t^{-m} V(\sqrt{t})^{-2}\chi^{-1}(c_B)\delta^{2}(c_B)   \int_{2r_B \leq |c_B^{-1}x| \leq 2} e^{(-2c_1+\beta)|c_B^{-1}x|^2/t}\, \dd \rho(x) \\ 
 &\qquad \qquad= \left(\frac{d_C(y,z)}{\sqrt{t}}\right)^{2\epsilon} t^{-m} V(\sqrt{t})^{-2}\chi^{-1}(c_B)\delta(c_B)  \int_{2r_B \leq |v| \leq 2} e^{(-2c_1+\beta)|v|^2/t}\, \dd \rho(v).
\end{align*}
It remains then to notice that
\[
\int_{2r_B \leq |v| \leq 2} e^{(-2c_1+\beta)|v|^2/t}\, \dd \rho(v) \lesssim V(\sqrt{t})
\]
as in~\cite[p.\ 9]{PV}.
\end{proof}

We are now in the position to prove Theorem~\ref{teoLocalRiesz}, whose proof occupies the remainder of the paper.
\begin{proof}[Proof of Theorem~\ref{teoLocalRiesz}]
To the sake of clarity, we split the proof in various steps.

\smallskip

\noindent \textbf{Step 1. } We prove the $L^2(\mu_\chi)$-boundedness of $X_J (\Delta_\chi +cI)^{-m/2}$. 

By the spectral theorem and~\eqref{unitary_equivalence}
\begin{align*}
X_J(\Delta_\chi +c I)^{-m/2} 
&= X_J\left(\chi^{-1/2}(\Delta +c I)^{-m/2}\chi^{1/2}\right) \\
& = \sum_{ J\in \mathfrak{l}^n, \, n\leq m } c_I \chi^{-1/2} X_J (\Delta +c I)^{-m/2}\chi^{1/2} \\
& = \sum_{ J\in \mathfrak{l}^n, \, n\leq m }  c_I \chi^{-1/2} X_J(\Delta +c I)^{-n/2} (\Delta +c I)^{(-m+n)/2}\chi^{1/2},
\end{align*}
for some constants $c_J$. By~\cite[Theorem 4.8, IV]{TER1}, if $c>0$ is large enough then the local Riesz transforms $X_J (\Delta +c I)^{-n/2} $ are bounded on $L^2(\rho)$ for every $I\in \mathfrak{l}^n$; moreover, the operators $(\Delta +c I)^{(-m+n)/2}$ are bounded on $L^2(\rho)$ for every $n\leq m$ and every $c>0$. Thus Step 1 is proved.

\smallskip

Let now $k_{J}^c$ be the convolution kernel of $X_J (\Delta_\chi +cI)^{-m/2}$, and $\phi \in C_c^\infty(B_1)$ be such that $\phi=1 $ on $B_{1/2}$ and $0\leq \phi \leq 1$. Then
\begin{align}
k_{J}^c(x) 
&= c(J) \int_0^\infty t^{m/2-1} e^{-ct} X_J p_t^\chi(x)\, \dd t \nonumber  \\
&= c(J) \left[ \phi(x)\int_0^1 \dots \, \dd t + (1-\phi(x)) \int_0^1 \dots \, \dd t + \int_1^\infty \dots \, \dd t  \right] \label{KJvari} \\
& \eqqcolon k^0_l(x) + k^0_g (x) + k^\infty(x).\nonumber
\end{align}

\smallskip

\noindent \textbf{Step 2.} We prove that the operators $f\mapsto f * k^\infty$ and $f\mapsto f * k^0_g$ are bounded on $L^p(\mu_\chi)$ for every $p\in [1,\infty]$. 

By Lemma~\ref{estimates_ptgamma}~(iii),
\begin{equation}\label{kinfty}
|k^\infty(x)|\leq \chi^{-1/2}(x) \int_1^\infty e^{-c't} e^{-b|x|^2/t}\, \dd t
\end{equation}
for some $c'>0$. For every $k\geq 1$, denote with $A^k_t$ the annulus $B_{2^k\sqrt{t}} \setminus B_{2^{k-1}\sqrt{t}} $. By Lemma~\ref{estimates_ptgamma}~(ii)
\begin{align*}
\|k^\infty\|_{L^1(\rho)} 
&\lesssim \int_1^\infty \int_{B_{\sqrt{t}}} \chi^{-1/2}(x)e^{-c't} e^{-b|x|^2/t} \, \dd \rho(x)\, \dd t \\ & \hspace{4cm} + \sum_{k=1}^\infty \int_1^\infty \int_{A^k_t}\chi^{-1/2}(x)e^{-c't} e^{-b|x|^2/t} \, \dd \rho(x)\, \dd t \\
&\lesssim \int_1^\infty e^{-c' t} e^{(D+\|X\| ) \sqrt{t}} \, \dd t + \sum_{k=1}^\infty \int_1^\infty e^{-c' t - b2^{2k} + (D+ \|X\|) 2^k \sqrt{t}}\, \dd t \lesssim 1,
\end{align*}
if $c>0$ is sufficiently large. As for $k^0_g$, again by Lemma~\ref{estimates_ptgamma}~(ii),
\begin{equation}\label{k0g}
|k^0_g(x)| \lesssim \chi^{-1/2}(x) e^{-b'|x|^2}
\end{equation}
whence
\begin{align*}
\|k^0_g\|_{L^1(\rho)} 
&\lesssim \sum_{k=1}^\infty \int_{A^k_1}  \chi^{-1/2} (x)e^{-b'|x|^2}\, \dd \rho(x)\lesssim \sum_{k=1}^\infty  e^{-b' 2^{2k}} e^{(D+ \|X\| )2^k} \lesssim 1.
\end{align*}
By~\eqref{kinfty} and~\eqref{k0g}, we have
\[
|k^\infty(x)\chi(x)|\leq\chi^{1/2}(x) \int_1^\infty e^{-c't} e^{-b|x|^2/t}\, \dd t, \qquad |k^0_g(x)\chi(x)| \lesssim \chi^{1/2}(x) e^{-b'|x|^2}.
\]
Arguing as above, one can show that $\|k^\infty\|_{L^1(\mu_\chi)}= \|k^\infty \chi\|_{L^1(\rho)}\lesssim 1 $ and that $\|k^0_g\|_{L^1(\mu_\chi)} = \|k^0_g \chi \|_{L^1(\rho}\lesssim 1$. Thus, by Lemma~\ref{lemma_convoluzione} the first step is proved. \emph{A fortiori}, this implies that the operators $f\mapsto f * k^\infty$ and $f\mapsto f * k^0_g$ are bounded from $\mathfrak{h}^1(\mu_\chi) $ to $L^1(\mu_\chi)$ and from $L^\infty(\mu_\chi)$ to $\mathfrak{bmo}(\mu_\chi)$.

\smallskip

\noindent \textbf{Step 3.} We prove that the operator $f\mapsto f * k^0_l $ is bounded from $L^\infty(\mu_\chi)$ to $\mathfrak{bmo}(\mu_\chi)$.

In view of~\cite[Theorem 8.2]{CMM1} together with~\cite[Proposition 4.5, (ii)]{CMM2} and the Step~1, it is enough to prove that, if $K^0_l$ is the integral kernel of the operator $f\mapsto f * k^0_l$, i.e. 
\[K^0_l(x,y)= \chi^{-1}(y)\delta(y) k^0_l(y^{-1}x),\] 
then
\begin{equation}\label{bmoLinfty}
\sup_{B\in \mathcal B_1} \sup_{y,z\in B} \int_{(2B)^c}|K^0_l(y,x)-K^0_l(z,x)| \dd \mu_\chi(x) < \infty
\end{equation} 
and
\begin{equation}\label{bmoLinftyr1}
\sup_{y\in G} \int_{(B(y,2))^c}|K^0_l(y,x)| \dd \mu_\chi (x) < \infty.
\end{equation} 
Since $\phi$ is supported in $B_1$, $k^0_l(x^{-1}y)=0$ if $x\in B(y,2)^c$, so that~\eqref{bmoLinftyr1} is immediate. As for~\eqref{bmoLinfty},
\begin{align}\label{K0ellbmo}
\int_{(2B)^c}|K^0_l(y,x)-K^0_l(z,x)| \dd \mu_\chi(x) \nonumber
&=\int_{(2B)^c}| k^0_l(x^{-1}y)\delta(x)  -   k^0_l(x^{-1}z)\delta(x) |\, \dd \rho(x) \nonumber \\
&=\int_{2r_B<|v|\leq 2}|k^0_l(vc_B^{-1}y)-k^0_l(vc_B^{-1}z)\,|\, \dd \rho(v) \nonumber \\ 
 &\lesssim d_C(y,z)\sum_{j=1}^\ell \int_{2r_B\leq |v|\leq 2} |X_j k^0_l(v)|\, \dd\rho(v).
\end{align}
It remains then to estimate $|X_j k^0_l|$. By the definition of $k^0_l$ and Lemma~\ref{estimates_ptgamma}~(iii), if $|v|\leq 2$ then
\begin{align*}
|X_j k^0_l(v)|
& \lesssim \int_0^1 t^{m/2-1} t^{-d/2-m/2} e^{-b|v|^2/t}\, \dd t + \int_0^1 t^{m/2-1} t^{-d/2-(m+1)/2} e^{-b|v|^2/t}\, \dd t \\
& =  \int_{|v|^2}^\infty \left( \frac{|v|^2}{s}\right)^{-d/2} e^{-b s}\, \dd s + \int_{|v|^2}^\infty \left( \frac{|v|^2}{s}\right)^{-(d+1)/2} e^{-b s}\, \dd s \lesssim |v|^{-d-1},
\end{align*} 
so that by~\eqref{K0ellbmo}
\[
\int_{(2B)^c}|K^0_l(y,x)-K^0_l(z,x)|	\, \dd \mu_\chi(x) \lesssim r_B \int_{2r_B\leq |v|\leq 2} |v|^{-d-1}\, \dd\rho(v).
\]
If $j_0$ is the biggest integer such that $2^{j_0-1}\leq 2r_B$, then $\{v\colon 2r_B\leq |v| \leq 2\} \subseteq \{v\colon 2^{j_0-1}\leq |v| \leq 2\}$ and hence
\[
r_B \int_{2r_B\leq |v|\leq 2} |v|^{-d-1}\, \dd\rho(v) \lesssim r_B \sum_{j=j_0}^1 \int_{A^j_1} |v|^{-d-1}\, \dd\rho(v)\lesssim  r_B \sum_{j=j_0}^1 2^{-j(d+1)} 2^{jd}\, \dd\rho(v)\lesssim 1
\]
which proves~\eqref{bmoLinfty}, and the Step 3.
\smallskip

\noindent \textbf{Step 4.} We prove that the operator $f\mapsto f * (k^0_l + k^0_g)$ is bounded from $\mathfrak{h}^1(\mu_\chi)$ to $L^1(\mu_\chi)$.

Let $k^0\coloneqq k^0_l + k^0_g $. Again by~\cite[Theorem 8.2]{CMM1} together with~\cite[Proposition 4.5, (ii)]{CMM2} and the Step~1, it is enough to prove that, if $K^0$ is the integral kernel of the operator $f\mapsto f * k^0$, i.e. 
\begin{align}\
K^0(x,w) = \chi^{-1}(w)\delta(w)k^0(w^{-1}x) =  \int_0^1 t^{m/2-1} e^{-ct} X_{J,x} P_t^\chi(x,w)\, \dd t \label{kernelK02}
\end{align}
then
\begin{equation}\label{h1L1}
\sup_{B\in \mathcal B_1} \sup_{y,z\in B} \int_{(2B)^c}|K^0(x,y)-K^0(x,z)|\, \dd \mu_\chi(x) < \infty
\end{equation} 
and
\begin{equation}\label{h1L1r1}
\sup_{y\in G} \int_{(B(y,2))^c}|K^0(x,y)| \, \dd \mu_\chi (x) < \infty.
\end{equation} 
As for~\eqref{h1L1r1}, observe that by~\eqref{kernelK02}
\begin{align*}
\int_{(B(y,2))^c}|K^0(x,y)| \, \dd \mu_\chi (x) &= \int_{d_C(x,y)\geq 2} |(k^0\chi)(y^{-1}x)| \delta(y) \, \dd \rho(x) \\
&= \int_{|v|\geq 2} |(k^0_g \chi)(v)| \, \dd \rho(v)\lesssim 1,
\end{align*}
since we proved in the Step 2 that $k^0_g \in L^1(\mu_\chi)$. Thus, only~\eqref{h1L1} is left. We split the integral
\[
\int_{(2B)^c}|K^0(x,y)-K^0(x,z)| \,\dd \mu_\chi(x) = \int_{2r_B \leq |c_B^{-1}x|\leq 2} \dots \, \dd \mu_\chi + \int_{|c_B^{-1}x|>2} \dots \, \dd \mu_\chi = \mathsf{I} + \mathsf{II}
\]
and observe that by~\eqref{kernelK02}
\begin{align*}
\mathsf{II} 
& \leq \int_{|c_B^{-1}x|>2} | (k^0\chi)(y^{-1}x) \delta(y)|\, \dd \rho(x) +  \int_{|c_B^{-1}x|>2} | (k^0\chi)(z^{-1}x) \delta(z)|\, \dd \rho(x)\\ 
& = 2 \int_{|v|>2} | (k^0\chi)(v)|\, \dd \rho(v)\lesssim 1
\end{align*}
as above. As for $\mathsf{I}$, to shorten the notation we denote by $A_B$ the annulus (depending on $B$) $\{x\colon 2r_B\leq d_C(x,c_B)\leq 2\} = B(c_B,2)\setminus B(c_B, 2r_B)$ and we split $\mathsf{I}$ in turn. Indeed, by~\eqref{kernelK02}
\begin{align*}
\mathsf{I}
& \leq \int_0^1 t^{m/2-1} \int_{A_B} | X_{J,x}P_t^\chi(x,y) - X_{J,x}P_t^\chi(x,z)|\, \dd\mu_\chi(x)\, \dd t\\
& = \int_{0}^{r_B^2} \int_{A_B}|Q_t^\chi(x,y,z)|\, \dd \mu_\chi(x) \, \dd t + \int_{r_B^2}^1 \int_{A_B}|Q_t^\chi(x,y,z)|\, \dd \mu_\chi(x) \, \dd t = \mathsf{I}_1 +\mathsf{I}_2.
\end{align*}
As for $\mathsf{I}_1$, by~\eqref{Ptgamma_ptgamma} and the left invariance of the vector fields 
\begin{equation*}
X_{J,x}P_t^\chi(x,w) = \chi^{-1}(w)\delta(w) (X_{J}p_t^\chi)(w^{-1}x)
\end{equation*}
so that $\mathsf{I}_1$ is controlled by
\begin{align}\label{I1}
\int_0^{r_B^2} t^{m/2-1} \int_{A_B} \left(| (\chi^{-1}\delta)(y) (X_{J}p_t^\chi)(y^{-1}x)|+ |(\chi^{-1}\delta)(z)  (X_{J}p_t^\chi)(z^{-1}x)|\right)\, \dd\mu_\chi(x)\, \dd t
\end{align}
and by using Lemma~\ref{estimates_ptgamma}~(iii)
\begin{align*}
&\int_0^{r_B^2} t^{m/2-1} \int_{2r_B \leq |c_B^{-1}x|\leq 2} | (\chi^{-1}\delta)(y)  (X_{J}p_t^\chi)(y^{-1}x)|\, \dd\mu_\chi(x)\, \dd t \\
&\qquad \qquad \qquad \leq \int_0^{r_B^2} t^{-d/2-1} \int_{A_B} \chi^{1/2}(y^{-1}x)\delta(y) e^{-b|y^{-1}x|^2/t}\, \dd \rho(x)\, \dd t \\ 
&\qquad \qquad \qquad \leq \int_0^{r_B^2} t^{-d/2-1}  \int_{r_B \leq |v|\leq 4}e^{-b|v|^2/t}\, \dd \rho(v)\, \dd t
\end{align*}
the second inequality by the change of variables $y^{-1}x=v$ and since the character $\chi$ and $\delta$ are bounded in $B_4$. This can be seen to be finite exactly as in~\cite[p.\ 13]{PV} and the second term in~\eqref{I1} (depending on $z$ instead of $y$) can be treated in the same way, since $y$ plays no role in the estimate. Thus, $\mathsf{I}_1\lesssim 1$.

It remains to estimate $\mathsf{I}_2$. We apply Cauchy-Schwarz inequality to its inner integral, and get
\begin{align*}
& \int_{A_B} | X_{J,x}Q_t^\chi(x,y,z)|\, \dd\mu_\chi(x)\\ 
& \qquad \qquad \leq \left( \int_{A_B} | X_{J,x}Q_t^\chi(x,y,z)|^2 e^{\beta |c_B^{-1}x|^2/t}\, \dd\mu_\chi(x)\right)^{1/2} \left( \int_{A_B} e^{-\beta |c_B^{-1}x|^2/t}\, \dd\mu_\chi(x)\right)^{1/2}\\ 
&\qquad \qquad= \mathsf{I}_{2,1}(t) + \mathsf{I}_{2,2}(t)
\end{align*}
where $\beta\in (0,2c_1)$ and $c_1$ is that of Lemma~\ref{lemmaQt}. To estimate $\mathsf{I}_{2,2}(t)$, when $2r_B\leq \sqrt t$ we choose $j_0$ as the smallest integer such that $2^{j_0+1}\sqrt t\geq 2$, and make the change of variables $c_B^{-1}x=v$. We obtain  
\begin{align*}
\mathsf{I}_{2,2}(t)^2&\leq\chi(c_B)\delta^{-1}(c_B)\int_{2r_B\leq |v|\leq 2}e^{-\beta |v|^2/t}\, \dd \rho(v)\\
&=\chi(c_B)\delta^{-1}(c_B)\left(  \int_{2r_B\leq |v|\leq \sqrt t}e^{-\beta |v|^2/t}\, \dd \rho(v) +\sum_{j=1}^{j_0}  \int_{2^j\sqrt t\leq |v|\leq 2^{j+1}\sqrt t}e^{-\beta |v|^2/t}\, \dd \rho(v)\right)\\
&\lesssim \chi(c_B)\delta^{-1}(c_B)\left(  V(\sqrt t)e^{-\beta r_B^2/t}+\sum_{j=1}^{j_0}  (2^j\sqrt t)^d e^{-\beta 2^j}\right) \lesssim  \chi(c_B)\delta^{-1}(c_B)   V(\sqrt t)e^{-\beta r_B^2/t}\,.
\end{align*}
The estimate when $\sqrt t\leq 2r_B$ is similar and yields as well $\mathsf{I}_{2,2}(t)^2\lesssim\chi(c_B)\delta^{-1}(c_B)   V(\sqrt t)e^{-\beta r_B^2/t}$. By Lemma~\ref{lemmaQt}, moreover,
\begin{align*}
\mathsf{I}_{2,1}(t)^2 \lesssim \left( \frac{r_B}{\sqrt{t}}\right)^{2\epsilon} t^{-m} \chi^{-1}(c_B)\delta(c_B) V(\sqrt{t})^{-1}.
\end{align*}
Therefore
\[
\mathsf{I}_2 \lesssim \int_{r_B^2}^1 t^{m/2-1}\mathsf{I}_{2,1}(t) \cdot \mathsf{I}_{2,2}(t) \, \dd t \lesssim \int_{r_B^2}^1 t^{-1} e^{-(\beta/2) r_B^2/t}\left( \frac{r_B}{\sqrt{t}}\right)^{\epsilon}  \, \dd t\lesssim 1
\]
and this completes the proof of~\eqref{h1L1} and of the Step 4.

\smallskip

\noindent \textbf{Step 5.} We may now conclude the proof. By Steps~2-4, $X_J(\Delta_\chi +cI)^{-m/2}$ is bounded from $\mathfrak{h}^1(\mu_\chi)$ to $L^1(\mu_\chi)$ and from $L^\infty$ to $\mathfrak{bmo}(\mu_\chi)$. Theorem~\ref{teoLocalRiesz} follows by interpolation~\cite[Theorem 5]{MedaVolpi}.
\end{proof}

\section{Final remarks}\label{Sec:finrem}
To conclude, we would like to mention a few natural questions related to the subject of this paper. Indeed, it may be worthy to study embedding theorems and algebra properties of other function spaces -- such as homogeneous Sobolev spaces and Besov spaces associated with the operator $\Delta_\chi$ -- in the spirit of~\cite{CRTN} and~\cite{GallagherSire, FMV} respectively. Moreover, it would be interesting to apply the algebra properties of the Sobolev spaces to other nonlinear differential equations associated with $\Delta_\chi$, such as the wave or the Schr\"odinger equations, and study local or global well-posedness results of associated nonlinear Cauchy problems.

\end{document}